\newtheorem{thm}{Theorem}[section]
\newtheorem{prop}[thm]{Proposition}
\newtheorem{conj}[thm]{Conjecture}
\newtheorem{cor}[thm]{Corollary}
\newtheorem{lem}[thm]{Lemma}
\theoremstyle{definition}
\numberwithin{equation}{section}
\newtheorem{rem}[thm]{\bf Remark}
\newtheorem{ex}[thm]{\bf Example}
\newtheorem{defn}[thm]{\bf Definition}
\def\bbQ{\mathbb{Q}}
\def\bbR{\mathbb{R}}
\def\bbZ{\mathbb{Z}}
\def\bfPsi{\boldsymbol{\Psi}}
\def\bfe{\mathbf{e}}
\def\bfn{\mathbf{n}}
\def\bfw{\mathbf{w}}
\def\bfz{\mathbf{z}}
\def\rmid{\mathrm{id}}
\def\rmpr{\mathrm{pr}}
\def\rmSupp{\mathrm{Supp}}
\def\calR{\mathcal{R}}
\def\calS{\mathcal{S}}
\def\frakd{\mathfrak{d}}
\def\frakD{\mathfrak{D}}
\def\frakg{\mathfrak{g}}
\def\frakj{\mathfrak{j}}
\def\frakp{\mathfrak{p}}
\def\fraks{\mathfrak{s}}
\def\l{\ell}
\def\ss{\scriptstyle}
\def\d{\delta}
\begin{document}
\bibliographystyle{amsalpha}

\title[Pentagon relation in QCSD]{
Pentagon relation in
quantum cluster scattering diagrams}
\address{\noindent Graduate School of Mathematics, Nagoya University, 
Chikusa-ku, Nagoya,
464-8604, Japan}
\email{nakanisi@math.nagoya-u.ac.jp}
\author{Tomoki Nakanishi}
\subjclass[2020]{Primary 13F60}
\keywords{cluster algebra, scattering diagram, quantum dilogarithm}
\maketitle

\begin{abstract}
We formulate the pentagon relation for quantum dilogarithm elements in the structure group of a quantum cluster scattering diagram (QCSD). As an application, we establish
 the nonpositivity of a certain class of nonskew-symmetric QCSDs. Also, we explicitly present various consistency relations and their positivity for  QCSDs of rank 2 completely or up to some degree, most of which are new in the literature.
\end{abstract}

\section{Introduction}

In  \cite{Gross14} 
the scattering diagram method 
 was employed
to study cluster algebras \cite{Fomin02},
where all information of a cluster algebra or a cluster pattern is encoded in the corresponding \emph{cluster
scattering diagram} (CSD, for short).
It was clarified implicitly in \cite{Gross14} and more explicitly in \cite{Nakanishi22a} that
the \emph{dilogarithm elements} $\Psi[n]$,
where $n$ is its normal vector,
and the \emph{pentagon relation} among them play prominent roles
in a CSD.
Meanwhile,
it has been  known that
cluster algebras 
admit natural quantizations \cite{Berenstein05b, Tran09, Fock03, Fock07},
where the \emph{quantum dilogarithm} plays a key role.
Naturally, \emph{quantum cluster scattering diagrams} (QCSD, for short),
which are  quantum analogs of  CSDs,
were introduced and  studied in \cite{Mandel15, Davison19},
where \emph{quantum dilogarithm elements} 
play a primary role.
Here and below, we distinguish the quantum dilogarithm $\bfPsi_q(x)$ and the quantum dilogarithm elements $\Psi_{a,b}[n]$.
The former is a formal power series of an indeterminate $x$ with a parameter $q$ due to \cite{Schutzenberger53,Faddeev94,Fock03}, while the latter
are certain elements of the structure group $G$ of a QCSD.
It is well known that the quantum dilogarithm satisfies the \emph{pentagon identity} \cite{Faddeev94,Chekhov99},
which is a quantum analog of the pentagon identity (more often called Abel's identity or the five-term relation) of the dilogarithm \cite{Lewin81}.

Having this development in mind,
in this paper we  formulate the \emph{pentagon relation} among 
the quantum dilogarithm elements (Theorem \ref{thm:pent1}),
which is the counterpart of the pentagon relation for the dilogarithm elements for a CSD in \cite{Nakanishi22a}.
The definition of QCSDs is in parallel with the one for CSDs
thanks to a more general formulation of scattering diagrams by \cite{Kontsevich13}.
However, each quantum dilogarithm element $\Psi_{a,b}[n]$
 carries some additional rational parameters (the \emph{quantum data}) $a$ and $b$
compared  with its classical counterpart $\Psi[n]$,
and this is the source of the complication and the richness
in the quantum case.
(The situation is  somewhat similar  to  the \emph{fusion} of representations of  quantum affine algebras
with the \emph{spectral parameter}.)

As an application, we study the positivity of QCSDs in Section 5.
We first formulate the positivity of CSDs and QCSDs (Definitions \ref{defn:positiveCSD1} and \ref{defn:positive1}) based on the structure of the walls therein.
It is known that any CSD is positive \cite{Gross14} and it implies the positivity of the theta functions for each CSD.
It is also known that any QCSD with the \emph{skew-symmetric} initial exchange matrix is positive \cite{Davison19}.
On the other hand, there are examples of nonpositive theta functions for QCSDs \cite{Lee12, Cheung20}.
This implies that QCSDs are not always positive in the \emph{nonskew-symmetric} case.
Using the pentagon relation, we clarify the origin of the nonpositivity in the combinatorics of quantum dilogarithm elements,
and we establish the \emph{nonpositivity} of a certain class of nonskew-symmetric QCSDs
  (Theorem \ref{thm:nonpos1} and 
Corollary
 \ref{cor:nonpos2}).
 This extends the results of \cite{Lee14, Davison19, Cheung20} systematically.
Also, we explicitly present various consistency relations
and their positivity for  QCSDs of rank 2
completely (Section \ref{subsec:rank2}) or up to degree 4 (Section \ref{subsec:positivity2}).
Most of these formulas are new in the literature and exhibit the complexity of the positivity phenomenon.

\medskip
\noindent
{Acknowledgment.}
We thank Peigen Cao for useful comments and especially for pointing out the errors in Section \ref{subsec:reduction1} in the early manuscript.
This work is partly supported by JSPS Grant Numbers JP16H03922 and JP22H01114.

\section{Quantum dilogarithm elements and pentagon relation}
\label{sec:qunatum1}

We introduce quantum dilogarithm elements and the pentagon relation among them.

\subsection{Structure group $G$}
\label{subsec:structure1}
Here we introduce the underlying \emph{structure group} $G$ of a  QCSD
 following \cite{Kontsevich13,Mandel15,Davison19, Cheung20}. 
See also   \cite{Gross14, Nakanishi22a} for parallel notions for a CSD.

Let $\Gamma$ be a \emph{fixed data} consisting of the following:
\begin{itemize}
\item a lattice $N$ of rank $r$,
\item a skew-symmetric bilinear form $\{\cdot, \cdot\}\colon N \times N \rightarrow \bbQ$,
\item a sublattice $N^{\circ}\subset N$ of rank $r$ such that $\{N^{\circ}, N\}\subset \bbZ$,
\item positive integers $\d_1$, \dots, $\d_r$ such that
there is a basis $(e_1,\dots,e_r)$ of $N$, where $(\d_1e_1,\dots,\d_re_r)$
is a basis of $N^{\circ}$,
\item $M=\mathrm{Hom}(N, \bbZ)$ and $M^{\circ}=\mathrm{Hom}(N^{\circ}, \bbZ)$.
\end{itemize}
Let $M_{\bbR}=M\otimes_{\bbZ} \bbR$.
Let
$\langle n, m\rangle$ denote  the canonical paring either for $N^{\circ}\times M^{\circ}$
or for $N\times M_{\bbR}$.
For $n\in N$, $n\neq 0$, let $n^{\perp}:=\{ \xi\in M_{\bbR} \mid \langle n, \xi\rangle =0\}$.

Let $\fraks=(e_1,\dots, e_r)$ be a \emph{seed} for $\Gamma$,
which is a basis of $N$ such that $(\d_1e_1,\dots, \d_re_r)$ is a basis of $N^{\circ}$.
The dual bases of $M$ and $M^{\circ}$ are given by
$(e_1^*,\dots,e_r^*)$ and 
$(f_1,\dots, f_r):=(e_1^*/\d_1,\dots,e_r^*/\d_r)$, respectively.
The initial exchange matrix  $B=(b_{ij})$ of the corresponding quantum cluster algebra
is given by
\begin{align}
\label{eq:B1}
b_{ij}=\{\d_i e_i, e_j\}.
\end{align}

Let 
\begin{align}
N^+=\Biggl\{ \sum_{i=1}^r a_i e_i \mid a_i \in \bbZ_{\geq 0},\, \sum_{i=1}^r a_i >0\Biggr\}
\end{align}
be the positive vectors of $N$ with respect to $\fraks$.
Let $N^+_{\rmpr}$ denote the set of primitive elements in $N^+$.
The degree function $\deg\colon N^+\rightarrow \bbZ_{> 0}$ is defined by $\deg( \sum_{i=1}^r a_i e_i ):=\sum_{i=1}^r a_i$.

For $n\in N^+$,  let $\d(n)$ be the smallest positive rational number 
such that $\d(n) n \in N^{\circ}$,
which is called the \emph{normalization factor} of $n$.
For example, $\d(e_i)=\d_i$.
We have $\d(n)\in \bbZ_{>0}$ and $\d(tn)=\d(n)/t$ for any $n\in N^+_{\rmpr}$ and $t\in \bbZ_{>0}$.
We set $\delta_0$ as the least common multiple of $\d_1$, \dots, $\d_r$.
Then, $\delta_0n\in N^{\circ}$ for any $n\in N^+$. It implies that
$\{\cdot, \cdot\}\in (1/\delta_0)\bbZ$
and that $\delta_0/\d(n)\in \bbZ_{>0}$.

Let $q$ be an indeterminate, and let $\bbQ(q^{ 1/\delta_0})$ be the rational function field of $q^{1/\delta_0}$.
For any $\alpha\in (1/\delta_0)\bbZ$, let
\begin{align}
[\alpha]_q:=\frac{q^{\alpha}-q^{-\alpha}}{q-q^{-1}}\in \bbQ(q^{ 1/\delta_0})
\end{align}
be the \emph{$q$-number}, which has the limit 
\begin{align}
\label{eq:qnumber1}
\lim_{q\rightarrow1}[\alpha]_q=\alpha.
\end{align}
Let $\frakg$ be the $N^+$-graded Lie algebra over $\bbQ(q^{ 1/\delta_0})$ defined by
\begin{align}
\label{eq:Xcom1}
\frakg=\bigoplus_{n\in N^+} \bbQ(q^{ 1/\delta_0}) X_n,
\quad
[X_n, X_{n'}]:=[\{n,n'\}]_q X_{n+n'},
\end{align}
where the Jacobi identity is easily verified.
Let $\widehat \frakg$ be the completion of $\frakg$ with respect to $\deg$, and let $G=\exp(\widehat \frakg)$
be the exponential group of $\widehat \frakg$ whose product is defined by the Baker-Campbell-Hausdorff formula
(e.g., \cite[\S V.5]{Jacobson79}).
We call $G$ the \emph{structure group} for the forthcoming scattering diagrams.

Let $(N^+)^{>\ell}=\{n\in N^+ \mid \deg(n)>\ell\}$.
Let $G^{>\ell}$ be the normal subgroup of $G$
consisting elements 
$\exp(\sum_{n\in (N^+)^{>\ell}}
c_n X_n)$, where the sum may be infinite,
and let $G^{\leq \ell}:=G/G^{>\ell}$ be its quotient.
For each $n\in N_{\rmpr}^+$, let $G_n^{\parallel}$ be the abelian subgroup of $G$ 
consisting of elements $\exp(\sum_{j=1}^{\infty}c_{j}X_{jn})$.
For any $g=\exp(X)$ and $c\in \bbQ(q^{ 1/\delta_0})$, 
the power $g^c\in G$ is defined by $\exp(cX)$.

\begin{rem}
So far, 
the only  difference from the classical case is the Lie bracket
in \eqref{eq:Xcom1}.
It is a quantum analog of the Lie bracket 
\begin{align}
\label{eq:cX1}
 [X_n, X_{n'}]:=\{n,n'\} X_{n+n'}.
 \end{align}
for a CSD.
In what follows all expressions involving $q$ converge to their classical counterparts
in the limit $q\rightarrow 1$.
\end{rem}
\subsection{$y$-representation}
\label{subsec:yrep1}
Let $y$ be a symbol.
Let $\bbQ(q^{ 1/\delta_0})[y]_q$ be the noncommutative and associative algebra over
$\bbQ(q^{ 1/\delta_0})$ with generators $y^n$ ($ n\in N^+\sqcup \{0\})$
and the $q$-commutative elations
\begin{align}
\label{eq:yrel1}
 y^n y^{n'}=q^{\{n,n'\}} y^{n+n'}.
\end{align}
Let $\calR_q(y)$ be the completion of $\bbQ(q^{ 1/\delta_0})[y]_q$ with respect to  $\deg$.
In other words,
any element of $\calR_q(y)$ is expressed as an infinite sum
\begin{align}
\sum_{n\in N^+\sqcup \{0\}}
c_n y^n
\quad
(c_n \in \bbQ(q^{ 1/\delta_0})).
\end{align}
Then, we define the action of  $\widehat\frakg$ on $\calR_q(y)$ by
\begin{align}
\label{3q:Xact1}
X_n(y^{n'}):= &\ [\{n,n'\}]_q y^{n'+n}\\
\label{3q:Xact12}
=&\  \frac{q^{2\{n,n'\}}-1}{q-q^{-1}}y^{n'}y^{n}.
\end{align}
It is easy to check that this is indeed an action of  $\widehat\frakg$
and also a derivation.
Thus,
it induces the action  of $G$ on  $\calR_q(y)$ defined by
\begin{align}
\label{eq:Xaction1}
(\exp X)(y^{n})=
\sum_{j=0}^{\infty}
\frac{1}{j!} X^j(y^{n})
\quad
(X\in \widehat\frakg).
\end{align}
Moreover, $\exp X$ is an algebra automorphism
\cite[\S I.2]{Jacobson79}.
We call the resulting representation
 $\rho_y: G\rightarrow \mathrm{Aut}(\calR_q(y))$
 the \emph{(quantum) $y$-representation} of $G$.
It is faithful if and only if $\{\cdot, \cdot\}$ is
nondegenerate.

\begin{rem}
If we replace the relation \eqref{eq:yrel1} with
the standard one
$ y^n y^{n'}=y^{n+n'}$,
the action \eqref{3q:Xact1} is still an action of $\widehat{\frakg}$,
but it is not a derivation.
This explains the necessity of the relation  \eqref{eq:yrel1}.
\end{rem}

\subsection{Quantum dilogarithm elements and pentagon relation}
\label{subsec:quantum1}

Let $\bfPsi_q(x)$ be the \emph{quantum dilogarithm} by \cite{Schutzenberger53, Faddeev93, Faddeev94, Fock03},
\begin{align}
\label{eq:qd1}
\bfPsi_q(x):=&\
\exp
\biggr(
\sum_{j=1}^{\infty}
\frac{(-1)^{j+1}}
{j(q^j - q^{-j})}
x^j
\biggl)
\quad
\in \bbQ(q)[[x]]
\\
=&\
\prod_{j=0}^{\infty}
(1+q^{2j+1} x)^{-1}.
\end{align}
This convention is due to \cite{Fock03}.
The equality holds, because
both expressions satisfy the  property
\begin{align}
\bfPsi_q(q^2 x)=(1+qx)\bfPsi_q(x),
\quad
\bfPsi_q(0)=1,
\end{align}
which uniquely determines $\bfPsi_q(x)$.
See, for example, \cite{Kirillov95, Fock03} for further information.

The following definition  is similar to   the first expression \eqref{eq:qd1}
of $\bfPsi_q(x)$.

\begin{defn}[Quantum dilogarithm element]
For any $n\in N^+$, $a\in (1/\delta_0)\bbZ_{>0}$, and  $b\in (1/\delta_0)\bbZ$, we define
\begin{align}
\label{eq:diloge1}
\Psi_{a,b}[n]:=\exp\Biggl( \sum_{j=1}^{\infty} \frac{(-1)^{j+1}}{j [ja]_q} q^{jb}X_{jn}\Biggr) \in G_{n_0}^{\parallel},
\end{align}
where $n_0\in  N_{\rmpr}^+$ is the one such that $n=jn_0$ for some $j\in \bbZ_{>0}$.
We call it a  \emph{quantum dilogarithm element}.
We call the parameters $a$ and $b$ the \emph{quantum data} of $\Psi_{a,b}[n]$.
They control the \emph{interval} and the \emph{shift}  in the sum, respectively.
For simplicity, we also write
\begin{align}
\Psi_{a}[n]:=\Psi_{a,0}[n].
\end{align}
\end{defn}

\begin{rem}
The above  $\Psi_{a,b}[n]$ is a quantum analog of the dilogarithm element 
in 
the classical case \cite[\S III.1.4]{Nakanishi22a}
\begin{align}
\Psi[n]
=\exp \Biggl(
\sum_{j=1}^{\infty}  \frac{ (-1)^{j+1} }{j^2}X_{jn}\Biggr).
\end{align}
We have
\begin{align}
\lim_{q\rightarrow 1} \Psi_{a,b}[n] = \Psi[n]^{1/a}.
\end{align}
Note that the interval $a$ appears in the RHS as the inverse.
\end{rem}

\begin{prop}
\label{prop:generate1}
The group $G$ is generated by $\Psi_{a,b}[n]^c$ {\upshape ($n\in N^+$,
$a\in (1/\delta_0)\bbZ_{>0}$, $b\in (1/\delta_0)\bbZ$, $c\in \bbQ(q^{ 1/\delta_0})$)}
admitting the infinite product.
\end{prop}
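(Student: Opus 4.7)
The plan is to use the filtration $G = \varprojlim_{\ell} G^{\leq\ell}$ and to argue by induction on $\ell$ that every element of $G^{\leq\ell}$ is a \emph{finite} product of images of $\Psi_{a,b}[n]^c$. The infinite product in $G$ will then be obtained by concatenating these finite products over $\ell$, with convergence controlled by the filtration.

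The key computation is the following: if $n \in N^+$ has $\deg(n) = m$, then by inspecting \eqref{eq:diloge1} only the $j=1$ term contributes in degree $m$, while every $j \geq 2$ term lives in $G^{>m}$. Hence
\begin{align}
\Psi_{a,b}[n]^c \equiv \exp\!\Bigl( \tfrac{c\, q^b}{[a]_q} X_n\Bigr) \pmod{G^{>m}}.
\end{align}
Since $[a]_q$ is a nonzero element of $\bbQ(q^{1/\delta_0})$ and $q^b$ is a unit, the scalar $c q^b/[a]_q$ runs over all of $\bbQ(q^{1/\delta_0})$ as $c$ does. Fixing any convenient choice of $a,b$ (e.g.\ $a=1/\delta_0$, $b=0$), we can therefore realize $\exp(c' X_n)$ modulo $G^{>m}$ by $\Psi_{a,b}[n]^{c'[a]_q/q^b}$ for any $c' \in \bbQ(q^{1/\delta_0})$.

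Now I induct on $\ell$. The case $\ell=0$ is trivial. For the step, take $g \in G^{\leq \ell+1}$ and let $\bar g \in G^{\leq \ell}$ be its image; by hypothesis $\bar g = \prod_{i=1}^k \Psi_{a_i,b_i}[n_i]^{c_i}$ in $G^{\leq \ell}$ for some finite collection with $\deg(n_i)\leq \ell$. Lifting this product to $G^{\leq \ell+1}$ and multiplying $g$ by its inverse yields an element $h$ in the kernel of $G^{\leq \ell+1}\to G^{\leq \ell}$, namely $G^{>\ell}/G^{>\ell+1}$. This kernel is abelian and, via $\exp$, is identified with $\bigoplus_{\deg(n)=\ell+1} \bbQ(q^{1/\delta_0}) X_n$. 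Write $h = \exp\bigl(\sum_{\deg(n)=\ell+1} c'_n X_n\bigr)$ and, by the preceding key computation, express each $\exp(c'_n X_n)$ as $\Psi_{a,b}[n]^{c'_n [a]_q/q^b}$ modulo $G^{>\ell+1}$. Since the factors commute in the abelian quotient, their product realizes $h$, which completes the inductive step.

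Finally, to pass from $G^{\leq \ell}$ to $G$, observe that in the inductive construction the new factors introduced at stage $\ell+1$ are $\Psi_{a,b}[n]^c$ with $\deg(n) = \ell+1$, hence lie in $G^{>\ell}$. Thus the concatenated sequence of factors, ordered by the stage at which they appear, forms an admissible infinite product: its partial truncations stabilize in every $G^{\leq\ell}$, and the product converges in $G = \varprojlim_\ell G^{\leq\ell}$ to the given element. There is no real obstacle here; the only point that requires a little care is the bookkeeping guaranteeing that the product is well-defined in the completion, which is precisely what the filtration structure provides.
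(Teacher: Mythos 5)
Your proof is correct, but it takes a route different from (and more explicit than) the paper's. The paper's own argument is essentially a one-liner: it notes, from \eqref{eq:diloge1}, that every element of a one-parameter abelian subgroup $G_{n_0}^{\parallel}$ ($n_0\in N^+_{\rmpr}$) is an infinite product of $\Psi_{a,b}[jn_0]^c$'s, and then relies implicitly on the standard fact that $G$ itself is generated, admitting infinite ordered products, by the subgroups $G_{n_0}^{\parallel}$. You bypass the subgroups $G_{n_0}^{\parallel}$ altogether and induct directly on the degree filtration $G^{\leq \ell}=G/G^{>\ell}$, using the leading-term observation that
\[
\Psi_{a,b}[n]^c \equiv \exp\!\bigl( c\,q^b X_n / [a]_q \bigr) \pmod{G^{>\deg n}}
\]
(with the scalar $c\,q^b/[a]_q$ ranging over all of $\bbQ(q^{1/\delta_0})$) to correct the abelian kernel $G^{>\ell}/G^{>\ell+1}\simeq\bigoplus_{\deg n=\ell+1}\bbQ(q^{1/\delta_0})X_n$ at each stage by finitely many new factors of degree exactly $\ell+1$. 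Your version is more self-contained — it does not assume the decomposition of $G$ along rays — and it makes the convergence of the resulting infinite product transparent through the filtration. The paper's version is shorter once the ray decomposition of $G$ is taken as known, which is standard in the scattering-diagram literature; both arguments are correct.
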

\begin{proof}
By \eqref{eq:diloge1}, any element of
$G_{n_0}^{\parallel}$ is expressed as an infinite product of 
$\Psi_{a,b}[jn_0]^c$'s.
\end{proof}

By \eqref{3q:Xact12},
each dilogarithm element  $\Psi_{a,b}[n]$ acts on  $\calR_q(y)$ under $\rho_y$ as
\begin{align}
\label{eq:ymut1}
\begin{split}
\Psi_{a,b}[n](y^{n'})
&= \exp\Biggl(\,\sum_{j=1}^{\infty}
 \frac{(-1)^{j+1}}{j[ja]_q} q^{jb} X_{jn}\Biggr)(y^{n'})\\
&=y^{n'}  \exp\Biggl( \sum_{j=1}^{\infty} 
 \frac{q^{2j\{ n,n'\}}-1}{q^{2ja}-1} \frac{(-1)^{j+1}}{j }q^{ja}q^{jb} y^{ jn}\Biggr).
 \end{split}
 \end{align}
 We consider two special situations,
 where the expression \eqref{eq:ymut1} is simplified.
 
\par
(i). Fix $n$ in \eqref{eq:ymut1}. Let $a=1/s\d(n)$, where $s\in \bbZ_{>0}$ is a divisor of $\delta_0/\d(n)$.
Then, $a\in (1/\d_0)\bbZ_{>0}$,
and also $\alpha:=\{s\d(n)n,n'\}$ is  an integer.
Then, we have
 \begin{align}
 \label{eq:qq0}
 \begin{split}
  \frac{q^{2j\{ n,n'\}}-1}{q^{2ja}-1} 
  &=
  \frac{q^{2\alpha j/s\d(n)}-1}{q^{2j/s\d(n)}-1} 
 =
  \begin{cases}
\displaystyle
\sum_{p=0}^{\alpha-1}
q^{2jp/s\d(n)}
& \alpha> 0,
\\
0
&
\alpha=0,
\\
\displaystyle
-
\sum_{p=1}^{-\alpha}
q^{-2jp/s\d(n)}
& \alpha< 0.
\end{cases}
\end{split}
 \end{align}
 Thus, we obtain
 \begin{align}
 \label{eq:ymut2}
 \begin{split}
 &\quad \
 \Psi_{1/s\d(n),b}[n](y^{n'})   
=
\begin{cases}
\displaystyle
y^{n'} \prod_{p=1}^{\alpha}
(1+q^{(2p-1)/s\d(n)}q^b y^n)
& \alpha> 0,
\\
y^{n'}
& \alpha=0,
\\
\displaystyle
y^{n'} \prod_{p=1}^{-\alpha}
(1+q^{-(2p-1)/s\d(n)}q^b y^n)^{-1}
& \alpha< 0.
\end{cases}
\end{split}
\end{align}
Observe that
this is the automorphism part of the Fock-Goncharov decomposition
of mutations of the quantum $y$-variables in \cite[\S 3.3]{Fock03}.

 (ii). Fix $n$ and $n'$ in \eqref{eq:ymut1}.
 Suppose that $\{n,n'\}\neq 0$,
 and let $a=|\{n,n'\}|$.
 Then,
 we have
 \begin{align}
 \begin{split}
  \frac{q^{2j\{ n,n'\}}-1}{q^{2ja}-1} 
  &=
  \begin{cases}
1
& \{n,n'\}>0,
\\
\displaystyle
-
q^{-2ja}
& \{n,n'\}<0.
\end{cases}
\end{split}
 \end{align}
 Thus, we obtain
 \begin{align}
 \label{eq:ymut3}
 \begin{split}
 &\quad \
 \Psi_{a,b}[n](y^{n'})   
=
\begin{cases}
\displaystyle
y^{n'} 
(1+q^{a}q^b y^n)
&\{n,n'\}>0,
\\
\displaystyle
y^{n'} 
(1+q^{-a}q^b y^n)^{-1}
& \{n,n'\}<0.
\end{cases}
\end{split}
\end{align}

\begin{rem}
\label{rem:adjoint1}
 The same formula in the RHS of  \eqref{eq:ymut2} 
 is also obtained by the
 \emph{adjoint action} of the
 quantum dilogarithm
 $\bfPsi_{q^{1/s\delta(n)}}(q^by^n)$ on $\calR_q(y)$
 \cite{Fock03}.
 In this sense, we have the correspondence between
  $\Psi_{1/s\d(n),b}[n]$ and
 the quantum dilogarithm
  $\bfPsi_{q^{1/s\delta(n)}}(q^by^n)$.
\end{rem}

We have the following important application of the formula
 \eqref{eq:ymut3}.
 
\begin{thm}
\label{thm:pent1}
Let $n_1,n_2\in N^+$.
The following relations hold in $G$.
\par
(a). Suppose that $\{n_2,n_1\}=0$.
Then,
for any $a_1$, $a_2$, $b_1$, and $b_2$,
we have
\begin{align}
\label{eq:com1}
\Psi_{a_2,b_2}[ n_2] \Psi_{a_1,b_1}[n_1 ] 
=\Psi_{a_1,b_1}[n_1 ] \Psi_{a_2,b_2}[ n_2] .
\end{align}

(b). (Pentagon relation. Cf.\ \cite[\S 2]{Faddeev94}).
Suppose that $\{n_2,n_1\}=c$ $(c\in (1/\delta_0)\bbZ_{>0})$.
Then,
for any $b_1$ and $b_2$,
we have
\begin{align}
\label{eq:pent1}
&\Psi_{c,b_2} [n_2 ] \Psi_{c,b_1}[ n_1]
=
\Psi_{c,b_1}[  n_1 ] \Psi_{c,b_1+b_2}[n_1+n_2] \Psi_{c,b_2}[n_2].
\end{align}
\end{thm}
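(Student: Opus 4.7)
The plan has two parts. Part (a) is a direct Lie-algebraic computation. Part (b) proceeds by pulling the identity back to $\calR_q(y)$ via the $y$-representation, where it becomes a conjugation identity reducible to Faddeev's pentagon identity for the quantum dilogarithm $\bfPsi_q(x)$.

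For part (a), assume $\{n_2, n_1\} = 0$. Then $\{jn_2, kn_1\} = jk\{n_2, n_1\} = 0$ for all $j, k \geq 1$, so $[X_{jn_2}, X_{kn_1}] = [0]_q X_{jn_2 + kn_1} = 0$ by \eqref{eq:Xcom1} and \eqref{eq:qnumber1}. The logarithms of $\Psi_{a_2, b_2}[n_2]$ and $\Psi_{a_1, b_1}[n_1]$ thus lie in commuting subspaces of $\widehat{\frakg}$, forcing their exponentials to commute in $G$.

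For part (b), the first step is to establish the identification
$$\rho_y(\Psi_{a,b}[n]) \;=\; \mathrm{Ad}\bigl(\bfPsi_{q^a}(q^b y^n)\bigr)$$
as automorphisms of $\calR_q(y)$, valid for all admissible $a, b, n$. This extends Remark \ref{rem:adjoint1} beyond the restricted range $a = 1/s\d(n)$. Under $\rho_y$, one checks from \eqref{3q:Xact12} that $X_n$ acts as $(q - q^{-1})^{-1} \mathrm{ad}(y^n)$; matching the defining series \eqref{eq:diloge1} against the exponential form \eqref{eq:qd1} of $\bfPsi_{q^a}$ and using $(q-q^{-1})[ja]_q = q^{ja} - q^{-ja}$ then gives $\rho_y(\Psi_{a,b}[n]) = \exp(\mathrm{ad}(\log \bfPsi_{q^a}(q^b y^n))) = \mathrm{Ad}(\bfPsi_{q^a}(q^b y^n))$. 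With this in hand, set $Q := q^c$, $U := q^{b_1} y^{n_1}$, $V := q^{b_2} y^{n_2}$; relation \eqref{eq:yrel1} gives $VU = q^{2c} UV$, and a direct calculation gives $Q UV = q^{b_1+b_2} y^{n_1+n_2}$. Faddeev's pentagon identity
$$\bfPsi_Q(V)\bfPsi_Q(U) = \bfPsi_Q(U)\, \bfPsi_Q(Q UV)\, \bfPsi_Q(V)$$
for $Q$-commuting variables therefore shows that the products of quantum dilogarithms corresponding to the two sides of \eqref{eq:pent1} agree in $\calR_q(y)$; taking $\mathrm{Ad}$ yields $\rho_y(\mathrm{LHS}) = \rho_y(\mathrm{RHS})$.

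The last step is to promote this equality of automorphisms back to an equality in $G$. When $\{\cdot, \cdot\}$ is nondegenerate, $\rho_y$ is faithful and we are done. In general, both sides of \eqref{eq:pent1} lie in the subgroup of $G$ arising from the Lie subalgebra of $\widehat{\frakg}$ spanned by $X_{jn_1 + kn_2}$ for $(j,k) \in \bbZ_{\geq 0}^2 \setminus \{(0,0)\}$, whose structure is, by \eqref{eq:Xcom1}, entirely determined by $c = \{n_2, n_1\} > 0$. I can therefore transfer the identity into a rank-$2$ fixed data in which $(n_1, n_2)$ plays the role of the seed and $\{n_2, n_1\} = c$: there the skew form is automatically nondegenerate and faithfulness of $\rho_y$ applies, so the identity holds in the ambient $G$ as well. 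The most delicate point is the identification in the first step, which requires carefully tracking the $(q - q^{-1})$ factors introduced when rewriting $X_n$ as a normalized commutator and reconciling them with the $[ja]_q$ in the defining exponential of $\Psi_{a,b}[n]$; once this is in place, the remainder is a clean transcription of Faddeev's pentagon identity through the $y$-representation.
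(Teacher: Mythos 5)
Your part~(a) matches the paper's one-line argument exactly. Part~(b) is correct, but takes a genuinely different route. The paper never invokes Faddeev's pentagon identity as a proved fact (it is only a ``Cf.'' citation): instead it restricts to the rank-$2$ sublattice generated by $n_1,n_2$ where $\rho_y$ is faithful, and then verifies the identity by hand, computing the action of both sides of \eqref{eq:pent1} on the two generators $y^{n_1}$ and $y^{n_2}$ using the closed-form automorphism formula \eqref{eq:ymut3} (the ``case~(ii)'' specialization $a=|\{n,n'\}|$). This is entirely self-contained. Your proof instead first promotes Remark~\ref{rem:adjoint1} from the special range $a=1/s\d(n)$ to arbitrary $a$, giving the clean identification $\rho_y(\Psi_{a,b}[n])=\mathrm{Ad}\bigl(\bfPsi_{q^a}(q^b y^n)\bigr)$ (your verification that $\rho_y(X_n)=(q-q^{-1})^{-1}\mathrm{ad}(y^n)$ and the subsequent matching of the two exponential series is correct), and then imports the operator pentagon $\bfPsi_Q(V)\bfPsi_Q(U)=\bfPsi_Q(U)\bfPsi_Q(QUV)\bfPsi_Q(V)$ for $VU=Q^2UV$ as a black box. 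This is more conceptual --- it makes the relation between the group-level pentagon and the Faddeev--Kashaev operator identity completely explicit --- at the cost of depending on an external result the paper deliberately avoids using. The faithfulness step at the end is the same idea in both proofs; the paper phrases it as restricting $\rho_y$ to the subalgebra generated by $y^{n_1},y^{n_2}$ rather than transplanting into a fresh rank-$2$ fixed data, which sidesteps any concern about whether $(n_1,n_2,c)$ literally satisfies the lattice-integrality conditions of Section~\ref{subsec:structure1}, but your version works too since only the Lie-algebra structure constants $[(jk'-kj')c]_q$ enter. One small gap to be aware of: you state the operator pentagon without proof or precise citation; since the theorem statement itself only says ``Cf.~\cite{Faddeev94}'', a reader would want either a pinpoint reference establishing the identity for an indeterminate $q$ in this exact normalization, or the remark that the two-variable case reduces to the same finite $y^{n_1},y^{n_2}$ check the paper performs --- at which point the two proofs essentially coincide again.
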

\begin{proof}
The relation \eqref{eq:com1} is clear by \eqref{eq:Xcom1}.
Let us prove the relation \eqref{eq:pent1}.
Since it only involves
$\Psi_{a,b}[n]$'s for the rank 2 sublattice $N'$ of $N$
generated by $n_1$ and $n_2$,
we concentrate on the subgroup $G'$ of $G$ corresponding to $N'$.
Accordingly, we  consider 
the $y$-representation of $G'$ acting on the subalgebra  of  $\calR_q(y)$ 
generated by $y^{n_1}$ and $y^{n_2}$.
By the assumption $\{n_2,n_1\}\neq 0$, this  is faithful.
Thus, one can prove  \eqref{eq:pent1} with this representation.

First, we consider the action on $y^{n_1}$.
Note that $\{n_1+n_2,n_1\}=-\{n_1+n_2,n_2 \}=c>0$.
Then,  thanks to the formula \eqref{eq:ymut3},
we have
\begin{align*}
\begin{split}
(\Psi_{c,b_2}[n_2 ] \Psi_{c,b_1}[ n_1])(y^{n_1})
&=
\Psi_{c,b_2}[n_2 ](y^{n_1})
\\
&=
y^{n_1} (1+ q^{c+b_2} y^{n_2}),
\end{split}
\end{align*}
and also
\begin{align*}
\begin{split}
&\quad\
(\Psi_{c,b_1}[n_1 ] \Psi_{c,b_1+b_2}[  n_1+n_2]
 \Psi_{c,b_2}[n_2])(y^{n_1})
 \\
&=
(\Psi_{c,b_1}[n_1 ] \Psi_{c,b_1+b_2}[  n_1+n_2])
(y^{n_1}(1+q^{c+b_2} y^{n_2}))
\\
&=
\Psi_{c,b_1}[n_1 ] 
(y^{n_1} (1+ q^{c+b_1+b_2} y^{n_1+n_2})
\\
& \qquad \times (1+q^{c+b_2} y^{n_2}(1+q^{-c+b_1+b_2} y^{n_1+n_2})^{-1}))
\\
&=
\Psi_{c,b_1}[n_1 ] 
(y^{n_1} (1+q^{c+b_2} y^{n_2}+q^{c+b_1+b_2} y^{n_1+n_2}))
\\
&=
y^{n_1} (1+q^{c+b_2} y^{n_2}(1+q^{-c+b_1}y_1)^{-1}+q^{c+b_1+b_2} y^{n_1+n_2}(1+q^{-c+b_1}y_1)^{-1})
\\
&=
y^{n_1} (1+q^{-c+b_1}y_1+q^{c+b_2} y^{n_2}+q^{b_1+b_2} y^{n_2}y^{n_1})(1+q^{-c+b_1}y_1)^{-1}
\\
&=
y^{n_1} (1+ q^{c+b_2} y^{n_2}).
\end{split}
\end{align*}
Thus, they coincide. 
Next, we consider the action on $y^{n_2}$.
Similarly, we have
\begin{align*}
\begin{split}
&\quad \
(\Psi_{c,b_2}[n_2 ] \Psi_{c,b_1}[ n_1])(y^{n_2})
\\
&=
\Psi_{c,b_2}[n_2 ](y^{n_2}(1+q^{-c+b_1}y^{n_1})^{-1})
\\
&=
y^{n_2} (1+q^{-c+b_1}y^{n_1}(1+ q^{c+b_2} y^{n_2}))^{-1}
\\
&=
y^{n_2} (1+q^{-c+b_1}y^{n_1}+ q^{b_1+b_2} y^{n_1}y^{n_2})^{-1},
\end{split}
\end{align*}
and also
\begin{align*}
\begin{split}
&\quad\
(\Psi_{c,b_1}[n_1 ] \Psi_{c,b_1+b_2}[  n_1+n_2]
 \Psi_{c,b_2}[n_2])(y^{n_2})
 \\
&=
(\Psi_{c,b_1}[n_1 ] \Psi_{c,b_1+b_2}[  n_1+n_2])
(y^{n_2})
\\
&=
\Psi_{c,b_1}[n_1 ]
(y^{n_2} (1+ q^{-c+b_1+b_2} y^{n_1+n_2})^{-1})
\\
&=
y^{n_2} (1+q^{-c+b_1}y^{n_1})^{-1}(1+ q^{-c+b_1+b_2} y^{n_1+n_2}(1+q^{-c+b_1}y_1)^{-1})^{-1}
\\
&=
y^{n_2} (1+q^{-c+b_1}y^{n_1}+ q^{b_1+b_2} y^{n_1}y^{n_2})^{-1}.
\end{split}
\end{align*}
Again, they coincide.
\end{proof}

Also, the following operations to decompose and unify quantum dilogarithm elements
are important for our purpose.
\begin{prop}
Let $p$ be a positive integer.
The following equalities hold,
where we assume  $a/p \in (1/\d_0)\bbZ_{>0}$ in the second equality:
\begin{align}
\label{eq:fis1}
&\text{\rm (fission)}
\quad
\Psi_{a,b}[n]
=
\prod_{t=1}^p \Psi_{pa, b+(2t-p-1)a}[n],
\\
\label{eq:fus1}
&\text{\rm (fusion)}
\quad
\prod_{t=1}^p \Psi_{a, b+(2t-p-1)a/p}[n]
=
\Psi_{a/p,b}[n].
\end{align}
\end{prop}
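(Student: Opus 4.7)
The plan is to work entirely inside the abelian subgroup $G_{n_0}^{\parallel}$, where $n_0\in N^+_{\rmpr}$ is chosen so that $n$ is a positive integer multiple of $n_0$. Every element appearing in either identity is the exponential of an element of $\prod_{j\geq 1}\bbQ(q^{1/\delta_0})X_{jn}$, and these generators all commute. Consequently, the product of exponentials equals the exponential of the sum of exponents, and each identity reduces to equating the coefficient of $X_{jn}$ on the two sides for every $j\geq 1$.

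After stripping off the common factor $(-1)^{j+1}/j$ from each term of the exponent, the fission identity reduces to the scalar identity
\[
\frac{q^{jb}}{[ja]_q} = \frac{1}{[jpa]_q}\sum_{t=1}^p q^{j(b\pm(2t-p-1)a)},
\]
and the fusion identity reduces to
\[
\frac{1}{[ja]_q}\sum_{t=1}^p q^{j(b\pm(2t-p-1)a/p)} = \frac{q^{jb}}{[ja/p]_q}.
\]
After cancellation of the overall $q^{jb}$, both are instances of the single $q$-number identity
\[
\sum_{t=1}^p q^{\alpha(2t-p-1)} = \frac{[p\alpha]_q}{[\alpha]_q},
\]
taken with $\alpha = ja$ for fission and $\alpha = ja/p$ for fusion.

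This master identity is just the elementary geometric-sum calculation
\[
\sum_{t=1}^p u^{2t-p-1} = u^{-(p-1)}\cdot\frac{u^{2p}-1}{u^2-1} = \frac{u^p-u^{-p}}{u-u^{-1}},
\]
evaluated at $u = q^{\alpha}$. The independence of the $\pm$ sign follows from the symmetry $t\mapsto p+1-t$, which sends the exponent set $\{2t-p-1:1\leq t\leq p\}$ to its negative. The only remaining item is a domain check, namely that the second index of each quantum dilogarithm element on the right-hand side lies in $(1/\delta_0)\bbZ$ so that the element is defined: for fission this is immediate from $a,b\in(1/\delta_0)\bbZ$, and for fusion it is exactly the hypothesis $a/p\in(1/\delta_0)\bbZ_{>0}$. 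There is no real obstacle; the full content of the proposition is packaged into the telescoping geometric sum above, with only the bookkeeping of indices and the $\pm$ symmetry requiring care.
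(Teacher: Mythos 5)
Your proof is correct and takes essentially the same approach as the paper: both reduce the claimed identities to the scalar $q$-number identity $\sum_{t=1}^p q^{\alpha(2t-p-1)} = [p\alpha]_q / [\alpha]_q$, proved by the same geometric-sum computation. The only cosmetic differences are that you verify the identity by equating coefficients of $X_{jn}$ inside the abelian group $G_{n_0}^{\parallel}$ and explicitly invoke the $t\mapsto p+1-t$ symmetry for the sign independence, whereas the paper derives fission by an inline manipulation of the exponent and simply asserts the sign independence and the fission--fusion equivalence.
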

\begin{proof}
Two equalities \eqref{eq:fis1} and \eqref{eq:fus1}
 are equivalent.
 The equality \eqref{eq:fis1} is shown as follows:
\begin{align*}
\begin{split}
\Psi_{a,b}[n]
&=
\exp\Biggl( \sum_{j=1}^{\infty} \frac{(-1)^{j+1}}{j [ja]_q} 
\frac{q^{jpa}-q^{-jpa}}{q^{jpa}-q^{-jpa}} 
q^{bj}X_{jn}\Biggr) 
\\
&=
\exp\Biggl( \sum_{j=1}^{\infty} \frac{(-1)^{j+1}}{j [jpa]_q} 
\frac{q^{jpa}-q^{-jpa}}{q^{ja}-q^{-ja}} 
q^{bj}X_{jn}\Biggr) 
\\
&=
\exp\Biggl( 
\sum_{t=1}^p
\sum_{j=1}^{\infty} \frac{(-1)^{j+1}}{j [jpa]_q} 
q^{j(2t-p-1)a}
q^{bj}X_{jn}\Biggr) 
\\
&=\prod_{t=1}^{p}
\Psi_{pa, b+(2t-p-1)a}[n].
\end{split}
\end{align*}
\end{proof}

\section{Quantum cluster scattering diagrams}
\label{sec:ex1}

We introduce quantum cluster scattering diagrams.
Examples in rank 2 are also given.

\subsection{Quantum cluster scattering diagrams}
\label{subsec:quantum2}

The  following definitions for (quantum) scattering diagrams
are just the same as the classical one in \cite{Gross14},
by replacing the structure group $G$ with the one in
Section \ref{subsec:structure1}.

A \emph{wall} $\bfw=(\frakd, g)_{n}$ for $\fraks$ is a triplet with
$n\in N_{\rmpr}^+$, a cone $\frakd \subset n^{\perp}\subset M_{\bbR}$ of codimension 1, and $g\in G_n^{\parallel}$. 
We call $n$, $\frakd$, $g$, the \emph{normal vector}, the \emph{support}, the \emph{wall element}
of $\bfw$, respectively.
Let $p^*\colon N \rightarrow M^{\circ}\subset M_{\bbR}$, $n \mapsto \{ \cdot, n\}$.
We say that a wall $\bfw=(\frakd, g)_{n}$ is \emph{incoming} if $p^*(n)\in \frakd$ holds. 

\begin{defn}[Scattering diagram]
A \emph{scattering diagram} $\frakD=\{ \bfw_{\lambda}=(\frakd_{\lambda}, g_{{\lambda}})_{n_{\lambda}}
\}_{\lambda\in \Lambda}$ for $\fraks$ is a collection of walls for $\fraks$
satisfying the  following finiteness condition:
For any degree $\ell$,
there are only finitely many walls such that $\pi_{\ell}(g_{\lambda})\neq \rmid$,
where $\pi_{\ell}\colon G \rightarrow G^{\leq \ell}$ is the canonical projection.
\end{defn}

For a scattering diagram  $\frakD$, we define
\begin{align}
\mathrm{Supp}(\frakD)&=\bigcup_{\lambda\in \Lambda} \frakd_{\lambda},
\quad
\mathrm{Sing}(\frakD)=\bigcup_{\lambda\in \Lambda} \partial\frakd_{\lambda}
\cup
\bigcup_{\ss \lambda, \lambda'\in \Lambda \atop \ss \dim 
\frakd_{\lambda}\cap \frakd_{\lambda'}=r-2
} \frakd_{\lambda}\cap \frakd_{\lambda'}.
\end{align}

A curve $\gamma\colon [0,1]\rightarrow M_{\bbR}$ is
\emph{admissible} for  $\frakD$
if it satisfies the following properties:
\begin{itemize}
\item[(1)]
The endpoints of $\gamma$ are in $M_{\bbR}\setminus \mathrm{Supp}(\frakD)$.
\item[(2)]
It is a smooth curve, and it intersects $\mathrm{Supp}(\frakD)$ transversally.
\item[(3)]
$\gamma$ does not intersect  $\mathrm{Sing}(\frakD)$.
\end{itemize}
For any admissible curve $\gamma$,
the path-ordered product $\frakp_{\gamma, \frakD}\in G$ is defined
as the product of
the wall elements $g_{{\lambda}}^{\epsilon_{\lambda}}$ of walls $\bfw_{\lambda}$ of $\frakD$ intersected by $\gamma$
in the order of intersection, where $\epsilon_{\lambda}$ is the \emph{intersection sign}
defined by
\begin{align}
\label{eq:int1}
\epsilon_{\lambda}=
\begin{cases}
1 & \langle n_{\lambda}, \gamma'\rangle <0,\\
-1 & \langle n_{\lambda}, \gamma'\rangle >0,
\end{cases}
\end{align}
and $\gamma'$ is the velocity vector of $\gamma$ at the wall $\bfw_{\lambda}$.
The product $\frakp_{\gamma, \frakD}$ is infinite in general,
and it is well-defined in $G$ due to the finiteness condition.
We say that a pair of scattering diagrams $\frakD$  and $\frakD'$ are \emph{equivalent} if  $\frakp_{\gamma, \frakD}=\frakp_{\gamma, \frakD'}$
for any admissible curve $\gamma$  for both $\frakD$ and $\frakD'$.
We say that a scattering diagram $\frakD$ is \emph{consistent} if  $\frakp_{\gamma, \frakD}=\rmid$
for any admissible loop (i.e., closed curve) $\gamma$ for $\frakD$.

The following definition is due to \cite{Mandel15}.

\begin{defn}[Quantum cluster scattering diagram]
A \emph{quantum cluster scattering diagram} $\frakD^q_{\fraks}$ (QCSD, for short) for $\fraks$ is a consistent scattering diagram 
whose set of incoming walls are given by
\begin{align}
\mathrm{In}_{\fraks}:=\{ (e_i^{\perp}, \Psi_{1/\d_i}[e_i])_{e_i} \mid i=1,\dots, r\}.
\end{align}
\end{defn}

\begin{thm}[{\cite[Theorems 1.12]{Gross14}, \cite[Theorem 2.1.6]{Kontsevich13}, \cite[Theorem 2.13]{Davison19}}]
\label{thm:CSD1}
There exists a QCSD $\frakD^q_{\fraks}$ uniquely up to equivalence.
\end{thm}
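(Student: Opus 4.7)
The plan is to prove existence and uniqueness by order-by-order induction on the degree $\ell$, following the Kontsevich--Soibelman scheme for pronilpotent graded Lie algebras adapted to the quantum setting. At each step I work modulo $G^{>\ell}$ and build a consistent scattering diagram $\frakD_\ell$ containing $\mathrm{In}_{\fraks}$, unique up to equivalence in $G^{\leq \ell}$. The base case $\ell=1$ is trivial because in $G^{\leq 1}$ the incoming walls $\mathrm{In}_{\fraks}$ do not interact, so $\frakD_1=\mathrm{In}_{\fraks}$ is already consistent.

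For the inductive step I assume $\frakD_\ell$ is constructed and measure the obstruction to consistency modulo $G^{>\ell+1}$. Concretely, for each small admissible loop $\gamma$ around a codimension-2 singular stratum in $\mathrm{Sing}(\frakD_\ell)$, the product $\frakp_{\gamma,\frakD_\ell}$ lies in $G^{\leq \ell+1}$ and maps to $\rmid$ in $G^{\leq \ell}$, hence equals $\exp(X_\gamma)$ for some $X_\gamma$ in the degree-$(\ell+1)$ piece of $\widehat{\frakg}$. The central structural fact, due to Kontsevich--Soibelman and established in the classical CSD case in \cite{Gross14} and the quantum case in \cite{Mandel15,Davison19}, is that $X_\gamma$ is supported on vectors $n$ of degree $\ell+1$ whose image $p^*(n)$ is parallel to walls already present near the stratum. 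This will let me cancel each obstruction by introducing, for each primitive ray $\bbR_{\geq 0}\,p^*(n_0)$, an outgoing wall of degree $\ell+1$ supported on the appropriate half-hyperplane in $n_0^\perp$.

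To realize the cancellation as a wall element in the required form, I would collect the correction on each ray into an element of $G_{n_0}^{\parallel}$ and invoke Proposition \ref{prop:generate1} to rewrite it as an infinite product of quantum dilogarithm elements $\Psi_{a,b}[jn_0]^c$. Because $G_{n_0}^{\parallel}$ is abelian and each $G^{\leq \ell+1}/G^{\leq \ell}$ injects into the abelian graded piece $\frakg_{\ell+1}$, the correction at degree $\ell+1$ is uniquely determined by $X_\gamma$, yielding uniqueness up to equivalence. Passing to the inverse limit $\frakD^q_{\fraks}=\varprojlim \frakD_\ell$, which is well-defined thanks to the finiteness condition in the definition of a scattering diagram, produces the desired QCSD.

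The main obstacle is establishing the structural lemma that $X_\gamma$ is supported on rays parallel to pre-existing walls, since this is what makes the cancellation possible by outgoing walls alone. However, the bracket \eqref{eq:Xcom1} only differs from the classical bracket \eqref{eq:cX1} by replacing $\{n,n'\}$ with its $q$-analogue $[\{n,n'\}]_q$, which does not alter the $N^+$-grading nor the support of brackets; moreover $[\{n,n'\}]_q$ vanishes exactly when $\{n,n'\}=0$ by \eqref{eq:qnumber1}, so the same combinatorics of codimension-2 obstructions applies. Consequently the classical Kontsevich--Soibelman argument, as deployed in the cited references, transfers verbatim and yields both existence and uniqueness of $\frakD^q_{\fraks}$ up to equivalence.
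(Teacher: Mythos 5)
Your proposal reconstructs exactly the order-by-order Kontsevich--Soibelman induction that the paper invokes by citing \cite{Gross14,Kontsevich13}, and your closing observation --- that the quantum bracket $[X_n,X_{n'}]=[\{n,n'\}]_q X_{n+n'}$ preserves the $N^+$-grading and vanishes precisely when $\{n,n'\}=0$, so the classical cancellation-of-obstructions argument transfers verbatim --- is exactly the content of the paper's one-line proof. This is essentially the same approach; the only cosmetic imprecision is that the structural lemma is better phrased as ``$X_\gamma$ is supported on $n$ of degree $\ell+1$ with $n^\perp$ containing the joint'' rather than ``$p^*(n)$ parallel to walls near the stratum.''
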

\begin{proof}
The proof of \cite[Theorem 1.12]{Gross14}, which originates in \cite[\S2]{Kontsevich13}, is also applicable to 
this case.
See also \cite[\S III.3]{Nakanishi22a}, which is closer to the present setting.
\end{proof}

Thanks to Proposition \ref{prop:generate1},
we may assume that, up to equivalence, wall elements of $\frakD_{\fraks}^q$
are given by  $\bbQ(q^{1/\delta_0})$-powers of quantum dilogarithm elements $\Psi_{a,b}[n]^c$
($n\in N^+$, $c\in \bbQ(q^{1/\delta_0})$).

\begin{rem}
To each admissible loop $\gamma$,
the consistency condition $\frakp_{\gamma, \frakD}=\rmid$
gives a relation among quantum dilogarithm elements.
Under the correspondence in Remark \ref{rem:adjoint1}
this relation is  translated as an identity for the quantum dilogarithm
(\emph{quantum dilogarithm identity}),
which is a generalization of a quantum dilogarithm identity associated 
with a period of a quantum cluster pattern studied in 
\cite{Fock07, Keller11, Kashaev11, Nagao11b}.
It is also the quantum counterpart of the (classical) dilogarithm identity
for the Rogers dilogarithm
 in \cite{Nakanishi21d}.
\end{rem}

\subsection{Rank 2 examples}
\label{subsec:rank2}
Let us demonstrate to construct the rank 2 QCSDs of finite and affine types
based on the pentagon relation \eqref{eq:pent1}
in the  spirit of \cite[\S III.2.2, \S III.3.5]{Nakanishi22a}
 for their classical counterparts.
 
\subsubsection{Finite type}
Here we consider the case where $\{\cdot, \cdot\}\neq 0$.
Without loss of generality, we may assume that $\{e_2, e_1\}=1$
and $\d_2 \geq \d_1$.
 Thus,
the initial exchange matrix $B$ in \eqref{eq:B1} is given by
\begin{align}
B=
\begin{pmatrix}
0 & -\d_1\\
\d_2 & 0
\end{pmatrix}.
\end{align}

Let $e_1^*, e_2^*\in M$ be the dual basis of $e_1, e_2\in N$.
Accordingly,  let $f_1= e_1^*/\d_1, f_2= e_2^*/\d_2 \in M^{\circ}$ be the dual basis of $
\d_1 e_1, \d_2 e_2\in N^{\circ}$.
Let $\bfe_1, \bfe_2 \in \bbZ^2$ be the unit vectors.
We identify $N \simeq \bbZ^2$, $e_i \mapsto \bfe_i$,
and $M_{\bbR} \simeq \bbR^2$, $f_i \mapsto \bfe_i$.
Then, we have $\{\bfn',\bfn \}=n'_2 n_1 - n'_1 n_2$.
Also,
the canonical paring
$\langle n, z \rangle\colon
 N\times M_{\bbR}\rightarrow \bbR$
   is given 
   by the corresponding vectors $\bfn$ and $\bfz$ as
\begin{align}
\label{3eq:can1}
\langle \bfn, \bfz \rangle
=
\bfn^T
\begin{pmatrix}
\d_1^{-1} & 0 \\
0 & \d_2^{-1} 
\end{pmatrix}
\bfz.
\end{align}
For $\bfn=(n_1,n_2)$, we write
\begin{align}
\begin{bmatrix}
n_1\\
n_2
\end{bmatrix}
_{a,b}
:=
\Psi_{a,b}[ \bfn],
\quad
\begin{bmatrix}
n_1\\
n_2
\end{bmatrix}
_{a}
:=
\Psi_{a}[ \bfn].
\end{align}

\begin{figure}
\centering
\leavevmode
\begin{xy}
0;/r1.2mm/:,
(0,-24)*{\text{(a) $A_2$}},
(4,9)*{\gamma_1};
(9,4)*{\gamma_2};
(7,7)*+{\bullet};(-7,-7)*+{\bullet};
(7,7)*+{};(-7,-7)*+{}
   **\crv{(0,6.7)&(-6.7,0)}
          ?>*\dir{>};
       (7,7)*+{};(-7,-7)*+{}
       **\crv{(6.7,0)&(0,-6.7)}
       ?>*\dir{>};
(-4, 9)*{\text{\small $\begin{bmatrix}1\\0\end{bmatrix}_{1}$}},
(-13, 0)*{\text{\small $\begin{bmatrix}0\\1\end{bmatrix}_{1}$}},
(11, -16)*{\text{\small $\begin{bmatrix}1\\1\end{bmatrix}_{1}$}},
(0,0)="A",
\ar@{-} "A"+(0,0); "A"+(10,0)
\ar@{-} "A"+(0,0); "A"+(0,10)
\ar@{-} "A"+(0,0); "A"+(-10,0)
\ar@{-} "A"+(0,0); "A"+(0,-10)
\ar@{-} "A"+(0,0); "A"+(10,-10)
\end{xy}
\hskip12pt
\begin{xy}
0;/r1.2mm/:,
(0,-24)*{\text{(b) $B_2$}},
(-4, 9)*{\text{\small $\begin{bmatrix}1\\0\end{bmatrix}_{1}$}},
(-13, 0)*{\text{\small $\begin{bmatrix}0\\1\end{bmatrix}_{\frac{1}{2}}$}},
(5, -16)*{\text{\small $\begin{bmatrix}1\\1\end{bmatrix}_{\frac{1}{2}}$}},
(11, -15.7)*{\text{\small $\begin{bmatrix}1\\2\end{bmatrix}_{1}$}},
(0,0)="A"
\ar@{-} "A"+(0,0); "A"+(10,0)
\ar@{-} "A"+(0,0); "A"+(0,10)
\ar@{-} "A"+(0,0); "A"+(-10,0)
\ar@{-} "A"+(0,0); "A"+(0,-10)
\ar@{-} "A"+(0,0); "A"+(5,-10)
\ar@{-} "A"+(0,0); "A"+(10,-10)
\end{xy}
\hskip12pt
\begin{xy}
0;/r1.2mm/:,
(0,-24)*{\text{(c) $G_2$}},
(-4, 9)*{\text{\small $\begin{bmatrix}1\\0\end{bmatrix}_{1}$}},
(-13, 0)*{\text{\small $\begin{bmatrix}0\\1\end{bmatrix}_{\frac{1}{3}}$}},
(-1, -16)*{\text{\small $\begin{bmatrix}1\\1\end{bmatrix}_{\frac{1}{3}}$}},
(5, -15.7)*{\text{\small $\begin{bmatrix}2\\3\end{bmatrix}_{1}$}},
(11, -16)*{\text{\small $\begin{bmatrix}1\\2\end{bmatrix}_{\frac{1}{3}}$}},
(17, -15.7)*{\text{\small $\begin{bmatrix}1\\3\end{bmatrix}_{1}$}},
(0,0)="A"
\ar@{-} "A"+(0,0); "A"+(10,0)
\ar@{-} "A"+(0,0); "A"+(0,10)
\ar@{-} "A"+(0,0); "A"+(-10,0)
\ar@{-} "A"+(0,0); "A"+(0,-10)
\ar@{-} "A"+(0,0); "A"+(3.33,-10)
\ar@{-} "A"+(0,0); "A"+(5,-10)
\ar@{-} "A"+(0,0); "A"+(6.66,-10)
\ar@{-} "A"+(0,0); "A"+(10,-10)
\end{xy}
\caption{Rank 2 QCSDs of finite type.}
\label{fig:scat1}
\end{figure}

\par
(a). Type $A_2$. Let $(\d_1,\d_2)=(1,1)$.
Since $\{\bfe_2, \bfe_1\}=1$,
we apply
the pentagon relation \eqref{eq:pent1} with $c=1$,
and we have
\begin{align}
\label{eq:pent0}
\begin{bmatrix}
0\\
1
\end{bmatrix}
_{1,0}
\begin{bmatrix}
1\\
0
\end{bmatrix}
_{1,0}
=
\begin{bmatrix}
1\\
0
\end{bmatrix}
_{1,0}
\begin{bmatrix}
1\\
1
\end{bmatrix}
_{1,0}
\begin{bmatrix}
0\\
1
\end{bmatrix}
_{1,0}.
\end{align}
This equality is naturally interpreted as
a consistency relation of
the QCSD of rank 2 in Figure \ref{fig:scat1} (a).
Namely, it consists of three walls
\begin{align}
(\bfe_1^{\perp}, \Psi_1[\bfe_1])_{\bfe_1},
\quad
(\bfe_2^{\perp}, \Psi_1[\bfe_2])_{\bfe_2},
\quad
(\bbR_{\geq 0} (1,-1), \Psi_1[(1,1)])_{(1,1)}.
\end{align}
The LHS of the equality \eqref{eq:pent0}
is the path-ordered product $\frakp_{\gamma_1, \frakD}$
along $\gamma_1$,
while the RHS is the one along $\gamma_2$
in Figure \ref{fig:scat1}.

\par
(b). Type $B_2$. Let $(\d_1,\d_2)=(1,2)$.
To  use
the pentagon relation \eqref{eq:pent1}  with $c=1$,
we first apply the fission \eqref{eq:fis1} for $\Psi_{1/2}(\bfe_2)$ with $p=2$.
Then apply the pentagon relation repeatedly for adjacent pairs,
and apply the fusion  \eqref{eq:fus1} in the end.
We have
\begin{align}
\label{eq:pent3}
\begin{split}
\begin{bmatrix}
0\\
1
\end{bmatrix}
_{\frac{1}{2},0}
\begin{bmatrix}
1\\
0
\end{bmatrix}
_{1,0}
&=
\begin{bmatrix}
0\\
1
\end{bmatrix}
_{1,-\frac{1}{2}}
\begin{bmatrix}
0\\
1
\end{bmatrix}
_{1,\frac{1}{2}}
\begin{bmatrix}
1\\
0
\end{bmatrix}
_{1,0}
=
\begin{bmatrix}
0\\
1
\end{bmatrix}
_{1,-\frac{1}{2}}
\begin{bmatrix}
1\\
0
\end{bmatrix}
_{1,0}
\begin{bmatrix}
1\\
1
\end{bmatrix}
_{1,\frac{1}{2}}
\begin{bmatrix}
0\\
1
\end{bmatrix}
_{1,\frac{1}{2}}
\\
&=
\begin{bmatrix}
1\\
0
\end{bmatrix}
_{1,0}
\begin{bmatrix}
1\\
1
\end{bmatrix}
_{1,-\frac{1}{2}}
\begin{bmatrix}
0\\
1
\end{bmatrix}
_{1,-\frac{1}{2}}
\begin{bmatrix}
1\\
1
\end{bmatrix}
_{1,\frac{1}{2}}
\begin{bmatrix}
0\\
1
\end{bmatrix}
_{1,\frac{1}{2}}
\\
&=
\begin{bmatrix}
1\\
0
\end{bmatrix}
_{1,0}
\begin{bmatrix}
1\\
1
\end{bmatrix}
_{1,-\frac{1}{2}}
\begin{bmatrix}
1\\
1
\end{bmatrix}
_{1,\frac{1}{2}}
\begin{bmatrix}
1\\
2
\end{bmatrix}
_{1,0}
\begin{bmatrix}
0\\
1
\end{bmatrix}
_{1,-\frac{1}{2}}
\begin{bmatrix}
0\\
1
\end{bmatrix}
_{1,\frac{1}{2}}
\\
&=
\begin{bmatrix}
1\\
0
\end{bmatrix}
_{1,0}
\begin{bmatrix}
1\\
1
\end{bmatrix}
_{\frac{1}{2},0}
\begin{bmatrix}
1\\
2
\end{bmatrix}
_{1,0}
\begin{bmatrix}
0\\
1
\end{bmatrix}
_{\frac{1}{2},0}
.
\end{split}
\end{align}
This equality is naturally interpreted as
a consistency relation of
the QCSD in Figure \ref{fig:scat1} (b),
which consists of four walls
\begin{gather}
(\bfe_1^{\perp}, \Psi_1[\bfe_1])_{\bfe_1},
\quad
(\bfe_2^{\perp}, \Psi_{1/2}[\bfe_2])_{\bfe_2},
\\
(\bbR_{\geq 0} (1,-2), \Psi_{1/2}[(1,1)])_{(1,1)},
\quad
(\bbR_{\geq 0} (1,-1), \Psi_1[(1,2)])_{(1,2)}.
\end{gather}

\par
(c). Type $G_2$. Let $(\d_1,\d_2)=(1,3)$.
In the same way, we obtain
\begin{align}
\label{eq:pent4}
\begin{split}
&\quad\,
\begin{bmatrix}
0\\
1
\end{bmatrix}
_{\frac{1}{3},0}
\begin{bmatrix}
1\\
0
\end{bmatrix}
_{1,0}
=
\begin{bmatrix}
0\\
1
\end{bmatrix}
_{1, -\frac{2}{3}}
\begin{bmatrix}
0\\
1
\end{bmatrix}
_{1, 0}
\begin{bmatrix}
0\\
1
\end{bmatrix}
_{1, \frac{2}{3}}
\begin{bmatrix}
1\\
0
\end{bmatrix}
_{1,0}
\\
&=
\begin{bmatrix}
1\\
0
\end{bmatrix}
_{1,0}
\begin{bmatrix}
1\\
1
\end{bmatrix}
_{1,-\frac{2}{3}}
\begin{bmatrix}
1\\
1
\end{bmatrix}
_{1,0}
\begin{bmatrix}
1\\
1
\end{bmatrix}
_{1,\frac{2}{3}}
\begin{bmatrix}
2\\
3
\end{bmatrix}
_{1,0}
\\
&\qquad\times
\begin{bmatrix}
1\\
2
\end{bmatrix}
_{1,-\frac{2}{3}}
\begin{bmatrix}
1\\
2
\end{bmatrix}
_{1,0}
\begin{bmatrix}
1\\
2
\end{bmatrix}
_{1,\frac{2}{3}}
\begin{bmatrix}
1\\
3
\end{bmatrix}
_{1,0}
\begin{bmatrix}
0\\
1
\end{bmatrix}
_{1, -\frac{2}{3}}
\begin{bmatrix}
0\\
1
\end{bmatrix}
_{1, 0}
\begin{bmatrix}
0\\
1
\end{bmatrix}
_{1, \frac{2}{3}}
\\
&=
\begin{bmatrix}
1\\
0
\end{bmatrix}
_{1,0}
\begin{bmatrix}
1\\
1
\end{bmatrix}
_{\frac{1}{3},0}
\begin{bmatrix}
2\\
3
\end{bmatrix}
_{1,0}
\begin{bmatrix}
1\\
2
\end{bmatrix}
_{\frac{1}{3},0}
\begin{bmatrix}
1\\
3
\end{bmatrix}
_{1,0}
\begin{bmatrix}
0\\
1
\end{bmatrix}
_{\frac{1}{3},0}
.
\end{split}
\end{align}
This equality is naturally interpreted as
a consistency relation of
the QCSD in Figure \ref{fig:scat1} (c),
which consists of six walls
\begin{gather}
(\bfe_1^{\perp}, \Psi_1[\bfe_1])_{\bfe_1},
\quad
(\bfe_2^{\perp}, \Psi_{1/3}[\bfe_2])_{\bfe_2},
\\
(\bbR_{\geq 0} (1,-3), \Psi_{{1/3}}[(1,1)])_{(1,1)},
\quad
(\bbR_{\geq 0} (1,-2), \Psi_1[(2,3)])_{(2,3)},
\\
(\bbR_{\geq 0} (2,-3), \Psi_{{1/3}}[(1,2)])_{(1,2)},
\quad
(\bbR_{\geq 0} (1,-1), \Psi_1[(1,3)])_{(1,3)}.
\end{gather}

Note that, in the RHSs of
\eqref{eq:pent0}, \eqref{eq:pent3}, and \eqref{eq:pent4},
all factors have the form $\Psi_{1/\d(n)}[n]$.

\subsubsection{Affine type}
\label{subsubsec:rank2affine}

Here we  consider the affine type,
where $\d_1 \d_2 = 4$ for $B$ in \eqref{eq:B1}.
  There are two cases.

\begin{figure}
\begin{tikzpicture}[scale=1.4]
\draw(0,0)--(1,0);
\draw(0,0)--(0,1);
\draw(0,0)--(-1,0);
\draw(0,0)--(0,-1);
\draw(0,0)--(0.5,-1);
\draw(0,0)--(0.66,-1);
\draw(0,0)--(0.75,-1);
\draw(0,0)--(0.8,-1);
\draw [very thick] (0,0)--(1,-1);
\draw(0,0)--(1,-0.5);
\draw(0,0)--(1,-0.66);
\draw(0,0)--(1,-0.75);
\draw(0,0)--(1,-0.8);
%
 \node at (0,-1.3){(a) $A_1^{(1)}$};
 \end{tikzpicture}
\hskip60pt
\begin{tikzpicture}[scale=1.4]
\draw(0,0)--(1,0);
\draw(0,0)--(0,1);
\draw(0,0)--(-1,0);
\draw(0,0)--(0,-1);
\draw(0,0)--(0.25,-1);
\draw(0,0)--(0.33,-1);
\draw(0,0)--(0.375,-1);
\draw(0,0)--(0.4,-1);
\draw(0,0)--(0.625,-1);
\draw(0,0)--(0.66,-1);
\draw(0,0)--(0.75,-1);
\draw(0,0)--(1,-1);
\draw [very thick] (0,0)--(0.5,-1);
%
 \node at (0,-1.3){(b) $A_2^{(2)}$};
 \end{tikzpicture}
\vskip-10pt
\caption{Rank 2 QCSDs of affine type.}
\label{fig:scat3}
\end{figure}

(a). Type $A_1^{(1)}$. Let $(\d_1,\d_2)=(2,2)$.
Note that, for any  primitive $n \in N_{\rmpr}^+$,
we have
$\d(n)=2$.
The following description of a  CSD $\frakD_{\fraks}$ is  known
\cite{Gross14,Reineke08, Reading18, Nakanishi22a, Matsushita21}:
The walls of $\frakD_{\fraks}$ are given by
\begin{gather}
\label{eq:a111}
(\bfe_1^{\perp}, \Psi[\bfe_1]^2)_{\bfe_1},
\quad
(\bfe_2^{\perp}, \Psi[\bfe_2]^2)_{\bfe_2},
\\
\label{eq:a112}
(\bbR_{\geq 0} (p,-p-1), \Psi[(p+1,p)]^2)_{(p+1,p)}
\quad
(p\in \bbZ_{>0}),
\\
\label{eq:a113}
(\bbR_{\geq 0} (p+1,-p), \Psi[(p,p+1)]^2)_{(p,p+1)}
\quad
(p\in \bbZ_{>0}),
\\
\label{eq:a114}
(\bbR_{\geq 0} (1,-1), \Psi[2^j\bfn_0]^{2^{2-j}})_{\bfn_0}
\quad
(j \in \bbZ_{\geq 0}),
\end{gather}
where  $\bfn_0=(1,1)$.
Note that $\d((p,p+1))=2$ and $\d(2^j \bfn_0)=2^{1-j}$.
See Figure \ref{fig:scat3} (a).

Again, we apply the fission and the pentagon relation 
to the element $\Psi_{1/2}[\bfe_2] \Psi_{1/2}[\bfe_1]$
and
obtain
the equality
\begin{align}
\label{eq:com2}
\begin{split}
&\quad \,
\begin{bmatrix}
0\\
1
\end{bmatrix}
_{\frac{1}{2},0}
\begin{bmatrix}
1\\
0
\end{bmatrix}
_{\frac{1}{2},0}
=
\begin{bmatrix}
0\\
1
\end{bmatrix}
_{1,-\frac{1}{2}}
\begin{bmatrix}
0\\
1
\end{bmatrix}
_{1,\frac{1}{2}}
\begin{bmatrix}
1\\
0
\end{bmatrix}
_{1,-\frac{1}{2}}
\begin{bmatrix}
1\\
0
\end{bmatrix}
_{1,\frac{1}{2}}
\\
&=
\begin{bmatrix}
1\\
0
\end{bmatrix}
_{1,-\frac{1}{2}}
\begin{bmatrix}
1\\
1
\end{bmatrix}
_{1,-1}
\begin{bmatrix}
1\\
1
\end{bmatrix}
_{1,0}
\begin{bmatrix}
1\\
2
\end{bmatrix}
_{1,-\frac{1}{2}}
\begin{bmatrix}
1\\
0
\end{bmatrix}
_{1,\frac{1}{2}}
\begin{bmatrix}
1\\
1
\end{bmatrix}
_{1,0}
\begin{bmatrix}
1\\
1
\end{bmatrix}
_{1,1}
\begin{bmatrix}
1\\
2
\end{bmatrix}
_{1,\frac{1}{2}}
\begin{bmatrix}
0\\
1
\end{bmatrix}
_{1,-\frac{1}{2}}
\begin{bmatrix}
0\\
1
\end{bmatrix}
_{1,\frac{1}{2}}.
\end{split}
\end{align}
At this moment, the expression in the RHS
is not yet ordered to be presented by
a scattering diagram.
Here, we say that 
the above product is \emph{ordered} (resp. \emph{anti-ordered})
if, for  any adjacent pair $[\bfn']_{a',b'}[\bfn]_{a,b}$,
$\{\bfn',\bfn\}=n'_2 n_1- n'_1 n_2\leq 0$ (resp. $\{\bfn',\bfn\}\geq 0$) holds.
Equivalently, 
if we view $[\bfn]$ as a fraction
$n_1/n_2$,
then, the numbers should be aligned in the decreasing order
(resp. increasing order)
form left to right.
The LHS of \eqref{eq:com2} is anti-ordered.
To make the RHS of \eqref{eq:com2} ordered, we need to interchange
the factors for $\bfn'=(1,2)$ and $\bfn=(1,0)$ in the middle,
where
$\{ \bfn',\bfn\}=2$.
As the lowest approximation, we consider modulo
$G^{>3}$.
Then, $[1,2]_{1,-1/2}$ commutes with other factors, and we have,
modulo $G^{>3}$,
\begin{align}
\label{eq:ordex1}
\begin{bmatrix}
0\\
1
\end{bmatrix}
_{\frac{1}{2},0}
\begin{bmatrix}
1\\
0
\end{bmatrix}
_{\frac{1}{2},0}
&
\equiv
\begin{bmatrix}
1\\
0
\end{bmatrix}
_{\frac{1}{2},0}
\begin{bmatrix}
2\\
1
\end{bmatrix}
_{\frac{1}{2},0}
\begin{bmatrix}
1\\
1
\end{bmatrix}
_{\frac{1}{2},-\frac{1}{2}}
\begin{bmatrix}
1\\
1
\end{bmatrix}
_{\frac{1}{2},\frac{1}{2}}
\begin{bmatrix}
1\\
2
\end{bmatrix}
_{\frac{1}{2},0}
\begin{bmatrix}
0\\
1
\end{bmatrix}
_{\frac{1}{2},0}.
\end{align}

To proceed to higher degree, we
now apply the pentagon relation  \eqref{eq:pent1}
with $c=2$ to the pair $[1,2]_{1,-1/2}$ and $[1,0]_{1,1/2}$
in \eqref{eq:com2}.
Then, we have
\begin{align}
\label{eq:com3}
\begin{split}
&\quad \,
\begin{bmatrix}
1\\
2
\end{bmatrix}
_{1,-\frac{1}{2}}
\begin{bmatrix}
1\\
0
\end{bmatrix}
_{1,\frac{1}{2}}
=
\begin{bmatrix}
1\\
2
\end{bmatrix}
_{2,-\frac{3}{2}}
\begin{bmatrix}
1\\
2
\end{bmatrix}
_{2,\frac{1}{2}}
\begin{bmatrix}
1\\
0
\end{bmatrix}
_{2,-\frac{1}{2}}
\begin{bmatrix}
1\\
0
\end{bmatrix}
_{2,\frac{3}{2}}
\\
&=
\begin{bmatrix}
1\\
0
\end{bmatrix}
_{2,-\frac{1}{2}}
\begin{bmatrix}
2\\
2
\end{bmatrix}
_{2,-2}
\begin{bmatrix}
2\\
2
\end{bmatrix}
_{2,0}
\begin{bmatrix}
3\\
4
\end{bmatrix}
_{2,-\frac{3}{2}}
\begin{bmatrix}
1\\
0
\end{bmatrix}
_{2,\frac{3}{2}}
\begin{bmatrix}
2\\
2
\end{bmatrix}
_{2,0}
\begin{bmatrix}
2\\
2
\end{bmatrix}
_{2,2}
\begin{bmatrix}
3\\
4
\end{bmatrix}
_{2,\frac{1}{2}}
\begin{bmatrix}
1\\
2
\end{bmatrix}
_{2,-\frac{3}{2}}
\begin{bmatrix}
1\\
2
\end{bmatrix}
_{2,\frac{1}{2}}.
\end{split}
\end{align}
This is parallel to \eqref{eq:com2} but with different quantum data.
As the next approximation, we consider modulo
$G^{>7}$.
Then, $[3,4]_{2,-3/2}$ commutes with other factors, and we have,
modulo $G^{>7}$,
\begin{align}
\label{eq:fact1}
\begin{bmatrix}
1\\
2
\end{bmatrix}
_{1,-\frac{1}{2}}
\begin{bmatrix}
1\\
0
\end{bmatrix}
_{1,\frac{1}{2}}
&
\equiv
\begin{bmatrix}
1\\
0
\end{bmatrix}
_{1,\frac{1}{2}}
\begin{bmatrix}
3\\
2
\end{bmatrix}
_{1,\frac{1}{2}}
\begin{bmatrix}
2\\
2
\end{bmatrix}
_{1,-1}
\begin{bmatrix}
2\\
2
\end{bmatrix}
_{1,1}
\begin{bmatrix}
3\\
4
\end{bmatrix}
_{1,-\frac{1}{2}}
\begin{bmatrix}
1\\
2
\end{bmatrix}
_{1,-\frac{1}{2}}
.
\end{align}
Then, we plug it into
\eqref{eq:com2},
and apply the pentagon relation,
and
we have,
modulo $G^{>7}$,
\begin{align}
\label{eq:ordex2}
\begin{split}
\begin{bmatrix}
0\\
1
\end{bmatrix}
_{\frac{1}{2},0}
\begin{bmatrix}
1\\
0
\end{bmatrix}
_{\frac{1}{2},0}
&
\equiv
\begin{bmatrix}
1\\
0
\end{bmatrix}
_{\frac{1}{2},0}
\begin{bmatrix}
2\\
1
\end{bmatrix}
_{\frac{1}{2},0}
\begin{bmatrix}
3\\
2
\end{bmatrix}
_{\frac{1}{2},0}
\begin{bmatrix}
4\\
3
\end{bmatrix}
_{\frac{1}{2},0}
\begin{bmatrix}
1\\
1
\end{bmatrix}
_{\frac{1}{2},-\frac{1}{2}}
\begin{bmatrix}
1\\
1
\end{bmatrix}
_{\frac{1}{2},\frac{1}{2}}
\\
&\quad\
\times
\begin{bmatrix}
2\\
2
\end{bmatrix}
_{1,-1}
\begin{bmatrix}
2\\
2
\end{bmatrix}
_{1,1}
\begin{bmatrix}
3\\
4
\end{bmatrix}
_{\frac{1}{2},0}
\begin{bmatrix}
2\\
3
\end{bmatrix}
_{\frac{1}{2},0}
\begin{bmatrix}
1\\
2
\end{bmatrix}
_{\frac{1}{2},0}
\begin{bmatrix}
0\\
1
\end{bmatrix}
_{\frac{1}{2},0}.
\end{split}
\end{align}
By continuing the procedure modulo  $G^{>2^{\l}-1}$,
one can naturally guess that
 the relation converges to the following one:
\begin{align}
\label{eq:a115}
\begin{split}
\begin{bmatrix}
0\\
1
\end{bmatrix}
_{\frac{1}{2},0}
\begin{bmatrix}
1\\
0
\end{bmatrix}
_{\frac{1}{2},0}
&
=
\begin{bmatrix}
1\\
0
\end{bmatrix}
_{\frac{1}{2},0}
\begin{bmatrix}
2\\
1
\end{bmatrix}
_{\frac{1}{2},0}
\begin{bmatrix}
3\\
2
\end{bmatrix}
_{\frac{1}{2},0}
\cdots
\Biggl(
\prod_{j=0}^{\infty}
\begin{bmatrix}
2^j\\
2^j
\end{bmatrix}
_{2^{j-1},-2^{j-1}}
\begin{bmatrix}
2^j\\
2^j
\end{bmatrix}
_{2^{j-1},2^{j-1}}
\Biggr)
\\
&\qquad \times
\cdots
\begin{bmatrix}
2\\
3
\end{bmatrix}
_{\frac{1}{2},0}
\begin{bmatrix}
1\\
2
\end{bmatrix}
_{\frac{1}{2},0}
\begin{bmatrix}
0\\
1
\end{bmatrix}
_{\frac{1}{2},0}.
\end{split}
\end{align}
Indeed,
one can prove the relation \eqref{eq:a115} completely
 based on the pentagon relation
 by modifying the proof of  \cite{Matsushita21} for the classical case.
 The detail will be found in Appendix \ref{sec:derivation}.
This is the simplest example of consistency relations
involving infinite products.
The formula  appeared in \cite[Eq.~(1.3)]{Dimofte09} without proof
in an alternative setting of quantum dilogarithms.
See also an alternative derivation in view of the quantum affine algebra of type $A_1^{(1)}$
\cite[Theorem 5.1]{Sugawara22}.

\smallskip

(b). Type $A_2^{(2)}$. Let $(\d_1,\d_2)=(1,4)$.
The situation is in parallel with $A_1^{(1)}$ though a little more complicated.
The following description of the  CSD $\frakD_{\fraks}$ is  known \cite{Reading18, Nakanishi22a, Matsushita21}:
The walls of $\frakD_{\fraks}$ are given by
\begin{gather}
\label{eq:a221}
(\bfe_1^{\perp}, \Psi[\bfe_1])_{\bfe_1},
\quad
(\bfe_2^{\perp}, \Psi[\bfe_2]^4)_{\bfe_2},
\\
\label{eq:a222}
(\bbR_{\geq 0} (p,-2p-1), \Psi[(2p+1,4p)])_{(2p+1,4p)}
\quad
(p\in \bbZ_{>0}),
\\
\label{eq:a223}
(\bbR_{\geq 0} (2p-1,-4p), \Psi[(p,2p-1)]^4)_{(p,2p-1)}
\quad
(p\in \bbZ_{>0}),
\\
\label{eq:a224}
(\bbR_{\geq 0} (p,-2p+1), \Psi[(2p-1,4p)])_{(2p-1,4p)}
\quad
(p\in \bbZ_{>0}),
\\
\label{eq:a225}
(\bbR_{\geq 0} (2p+1,-4p), \Psi[(p,2p+1)]^4)_{(p,2p+1)}
\quad
(p\in \bbZ_{>0}),
\\
\label{eq:a226}
 (\bbR_{\geq 0} (1,-2), \Psi[\bfn_0]^6)_{\bfn_0},
 \quad
 (\bbR_{\geq 0} (1,-2), \Psi[2^j \bfn_0]^{2^{2-j}})_{\bfn_0}
 \quad
 (j\in \bbZ_{>0}),
\end{gather}
where  $\bfn_0=(1,2)$.
Note that $\d(2^j \bfn_0)=2^{1-j}$.
See Figure \ref{fig:scat3} (b).

Again, we apply the fission and  the pentagon relation 
to the  element $\Psi_{1/4}[\bfe_2] \Psi_{1}[\bfe_1]$
and
obtain
the equality
\begin{align}
\label{3eq:com4}
\begin{split}
&\quad\,
\begin{bmatrix}
0\\
1
\end{bmatrix}
_{\frac{1}{4},0}
\begin{bmatrix}
1\\
0
\end{bmatrix}
_{1,0}
=
\begin{bmatrix}
0\\
1
\end{bmatrix}
_{1,-\frac{3}{4}}
\begin{bmatrix}
0\\
1
\end{bmatrix}
_{1,-\frac{1}{4}}
\begin{bmatrix}
0\\
1
\end{bmatrix}
_{1,-\frac{1}{4}}
\begin{bmatrix}
0\\
1
\end{bmatrix}
_{1,\frac{3}{4}}
\begin{bmatrix}
1\\
0
\end{bmatrix}
_{1,0}
\\
&=
\begin{bmatrix}
1\\
0
\end{bmatrix}
_{1,0}
\begin{bmatrix}
1\\
1
\end{bmatrix}
_{1,-\frac{3}{4}}
\begin{bmatrix}
1\\
1
\end{bmatrix}
_{1,-\frac{1}{4}}
\begin{bmatrix}
1\\
1
\end{bmatrix}
_{1,\frac{1}{4}}
\begin{bmatrix}
2\\
3
\end{bmatrix}
_{1,-\frac{3}{4}}
\begin{bmatrix}
1\\
2
\end{bmatrix}
_{1,-1}
\begin{bmatrix}
1\\
2
\end{bmatrix}
_{1,-\frac{1}{2}}
\begin{bmatrix}
1\\
2
\end{bmatrix}
_{1,0}
\begin{bmatrix}
1\\
3
\end{bmatrix}
_{1,-\frac{3}{4}}
\begin{bmatrix}
1\\
1
\end{bmatrix}
_{1,\frac{3}{4}}
\\
&\qquad
\times
\begin{bmatrix}
1\\
2
\end{bmatrix}
_{1,0}
\begin{bmatrix}
1\\
2
\end{bmatrix}
_{1,\frac{1}{2}}
\begin{bmatrix}
1\\
2
\end{bmatrix}
_{1,1}
\begin{bmatrix}
2\\
5
\end{bmatrix}
_{1,\frac{3}{4}}
\begin{bmatrix}
1\\
3
\end{bmatrix}
_{1,-\frac{1}{4}}
\begin{bmatrix}
1\\
3
\end{bmatrix}
_{1,\frac{1}{4}}
\begin{bmatrix}
1\\
3
\end{bmatrix}
_{1,\frac{3}{4}}
\begin{bmatrix}
1\\
4
\end{bmatrix}
_{1,0}
\begin{bmatrix}
0\\
1
\end{bmatrix}
_{\frac{1}{4},0}
.
\end{split}
\end{align}
Again, this
is not yet ordered,
and
to order it, we need to interchange
$[1,3]_{1,-3/4}$ and $[1,1]_{1,3/4}$ in the middle.
As the lowest approximation, we consider modulo
$G^{>5}$.
Then, $\Psi_{1,-3/4}[1,3]$ commutes with other factors, and we have,
modulo $G^{>5}$,
\begin{align}
\begin{split}
& \quad\,
\begin{bmatrix}
0\\
1
\end{bmatrix}
_{\frac{1}{4},0}
\begin{bmatrix}
1\\
0
\end{bmatrix}
_{1,0}
\equiv
\begin{bmatrix}
1\\
0
\end{bmatrix}
_{1,0}
\begin{bmatrix}
1\\
1
\end{bmatrix}
_{\frac{1}{4},0}
\begin{bmatrix}
2\\
3
\end{bmatrix}
_{\frac{1}{4},0}
\begin{bmatrix}
1\\
2
\end{bmatrix}
_{\frac{1}{2},-\frac{1}{2}}
\begin{bmatrix}
1\\
2
\end{bmatrix}
_{\frac{1}{2},0}
\begin{bmatrix}
1\\
2
\end{bmatrix}
_{\frac{1}{2},\frac{1}{2}}
\begin{bmatrix}
1\\
3
\end{bmatrix}
_{\frac{1}{4},0}
\begin{bmatrix}
1\\
4
\end{bmatrix}
_{1,0}
\begin{bmatrix}
0\\
1
\end{bmatrix}
_{\frac{1}{4},0}
.
\end{split}
\end{align}

To proceed to higher degree,
we do
in the same way as before.
By applying the fission and the pentagon relation  \eqref{eq:pent1} with $c=2$, 
we have
 \begin{align}
\begin{split}
&\quad \,
\begin{bmatrix}
1\\
3
\end{bmatrix}
_{1,-\frac{3}{4}}
\begin{bmatrix}
1\\
1
\end{bmatrix}
_{1,\frac{3}{4}}
=
\begin{bmatrix}
1\\
3
\end{bmatrix}
_{2,-\frac{7}{4}}
\begin{bmatrix}
1\\
3
\end{bmatrix}
_{2,\frac{1}{4}}
\begin{bmatrix}
1\\
1
\end{bmatrix}
_{2,-\frac{1}{4}}
\begin{bmatrix}
1\\
1
\end{bmatrix}
_{2,\frac{7}{4}}
\\
&=
\begin{bmatrix}
1\\
1
\end{bmatrix}
_{2,-\frac{1}{4}}
\begin{bmatrix}
2\\
4
\end{bmatrix}
_{2,-2}
\begin{bmatrix}
2\\
4
\end{bmatrix}
_{2,0}
\begin{bmatrix}
3\\
7
\end{bmatrix}
_{2,-\frac{7}{4}}
\begin{bmatrix}
1\\
1
\end{bmatrix}
_{2,\frac{7}{4}}
\begin{bmatrix}
2\\
4
\end{bmatrix}
_{2,0}
\begin{bmatrix}
2\\
4
\end{bmatrix}
_{2,2}
\begin{bmatrix}
3\\
7
\end{bmatrix}
_{2,\frac{1}{4}}
\begin{bmatrix}
1\\
3
\end{bmatrix}
_{2,-\frac{7}{4}}
\begin{bmatrix}
1\\
3
\end{bmatrix}
_{2,\frac{1}{4}}.
\end{split}
\end{align}
This parallel to
 \eqref{eq:com2}, but with different quantum data.
As the next approximation, we consider modulo
$G^{>11}$.
Then, $[3,7]_{2,-7/4}$ commutes with other factors, and we have,
modulo $G^{>11}$,
\begin{align}
\begin{bmatrix}
1\\
3
\end{bmatrix}
_{1,-\frac{3}{4}}
\begin{bmatrix}
1\\
1
\end{bmatrix}
_{1,\frac{3}{4}}
&
\equiv
\begin{bmatrix}
1\\
1
\end{bmatrix}
_{1,\frac{3}{4}}
\begin{bmatrix}
3\\
5
\end{bmatrix}
_{1,\frac{3}{4}}
\begin{bmatrix}
2\\
4
\end{bmatrix}
_{1,-1}
\begin{bmatrix}
2\\
4
\end{bmatrix}
_{1,1}
\begin{bmatrix}
3\\
7
\end{bmatrix}
_{1,-\frac{3}{4}}
\begin{bmatrix}
1\\
3
\end{bmatrix}
_{1,-\frac{3}{4}}
.
\end{align}
Then, we plug it into
\eqref{3eq:com4},
and apply the pentagon relation,
and
we have,
modulo $G^{>11}$,
\begin{align}
\begin{split}
\begin{bmatrix}
0\\
1
\end{bmatrix}
_{\frac{1}{4},0}
\begin{bmatrix}
1\\
0
\end{bmatrix}
_{1,0}
%
&
\equiv
\begin{bmatrix}
1\\
0
\end{bmatrix}
_{1,0}
\begin{bmatrix}
1\\
1
\end{bmatrix}
_{\frac{1}{4},0}
\begin{bmatrix}
3\\
4
\end{bmatrix}
_{1,0}
\begin{bmatrix}
2\\
3
\end{bmatrix}
_{\frac{1}{4},0}
\begin{bmatrix}
3\\
5
\end{bmatrix}
_{\frac{1}{4},0}
\begin{bmatrix}
4\\
7
\end{bmatrix}
_{\frac{1}{4},0}
\begin{bmatrix}
1\\
2
\end{bmatrix}
_{\frac{1}{2},-\frac{1}{2}}
\begin{bmatrix}
1\\
2
\end{bmatrix}
_{\frac{1}{2},0}
\begin{bmatrix}
1\\
2
\end{bmatrix}
_{\frac{1}{2},\frac{1}{2}}
\\
&\qquad\times
\begin{bmatrix}
2\\
4
\end{bmatrix}
_{1,-1}
\begin{bmatrix}
2\\
4
\end{bmatrix}
_{1,1}
\begin{bmatrix}
3\\
7
\end{bmatrix}
_{\frac{1}{4},0}
\begin{bmatrix}
2\\
5
\end{bmatrix}
_{\frac{1}{4},0}
\begin{bmatrix}
3\\
8
\end{bmatrix}
_{1,0}
\begin{bmatrix}
1\\
3
\end{bmatrix}
_{\frac{1}{4},0}
\begin{bmatrix}
1\\
4
\end{bmatrix}
_{1,0}
\begin{bmatrix}
0\\
1
\end{bmatrix}
_{\frac{1}{4},0}.
\end{split}
\end{align}
By continuing the procedure modulo  $G^{>2^{\l}-1}$,
one can naturally guess that
 the relation converges to the following one:
\begin{align}
\label{eq:a227}
\begin{split}
\begin{bmatrix}
0\\
1
\end{bmatrix}
_{\frac{1}{4},0}
\begin{bmatrix}
1\\
0
\end{bmatrix}
_{1,0}
&=
\begin{bmatrix}
1\\
0
\end{bmatrix}
_{1,0}
\begin{bmatrix}
1\\
1
\end{bmatrix}
_{\frac{1}{4},0}
\begin{bmatrix}
3\\
4
\end{bmatrix}
_{1,0}
\begin{bmatrix}
2\\
3
\end{bmatrix}
_{\frac{1}{4},0}
\begin{bmatrix}
5\\
8
\end{bmatrix}
_{1,0}
\begin{bmatrix}
3\\
5
\end{bmatrix}
_{\frac{1}{4},0}
\cdots
\\
&
\qquad
\times
\Biggl(
\begin{bmatrix}
1\\
2
\end{bmatrix}
_{\frac{1}{2},0}
\prod_{j=0}^{\infty}
\begin{bmatrix}
 2^j\\
 2^{j+1}
\end{bmatrix}
_{2^{j-1}, -2^{j-1}}
\begin{bmatrix}
 2^j\\
 2^{j+1}
\end{bmatrix}
_{2^{j-1}, 2^{j-1}}
\Biggr)
\\
&
\qquad 
\times
\cdots
\begin{bmatrix}
5\\
12
\end{bmatrix}
_{1,0}
\begin{bmatrix}
2\\
5
\end{bmatrix}
_{\frac{1}{4},0}
\begin{bmatrix}
3\\
8
\end{bmatrix}
_{1,0}
\begin{bmatrix}
1\\
3
\end{bmatrix}
_{\frac{1}{4},0}
\begin{bmatrix}
1\\
4
\end{bmatrix}
_{1,0}
\begin{bmatrix}
0\\
1
\end{bmatrix}
_{\frac{1}{4},0}
.
\end{split}
\end{align}
Again,
one can prove the relation \eqref{eq:a227} 
completely
 based on the pentagon relation
  by modifying the proof of  \cite{Matsushita21} for the classical case.
   The detail will be found in Appendix \ref{sec:derivation}.

Note that, in the RHSs of \eqref{eq:a115} and 
 \eqref{eq:a227},
 all factors have the form
$\Psi_{1/\d(n),b}[n]$.

\section{Principal $x$-representation}
\label{sec:principal1}

We introduce another representation of $G$ which we call
the \emph{principal $x$-representation}.
It is closely related to the quantization of cluster variables ($x$-variables) in \cite{Berenstein05b}.
Then, we obtain the reduction property of wall elements for a QCSD,
which is important in our application of the pentagon relation to the positivity problem.

\subsection{Principal $x$-representation}
Following  \cite{Gross14},
we introduce
\begin{align}
\tilde N= N \oplus M^{\circ},
\quad
\tilde M^{\circ}=M^{\circ} \oplus N,
\end{align}
which are ``dual'' to each other twisted by $\d_1,\dots,\d_r$.
An element of $\tilde N$ is denoted by
$\tilde n = (n,m)$ ($n\in N$, $m\in M^{\circ}$).
Similarly,
an element of $\tilde M^{\circ}$ is denoted by
$\tilde m = (m,n)$ ($m\in M^{\circ}$, $n\in N$).
Below we use  the notation such as $\tilde n_1 = (n_1,m_1)$,
$\tilde m' = (m',n')$ without explanation.
Meanwhile, we also use the internal-sum notation.
For example, for $n\in N$, 
we write $n=(n,0)\in \tilde N$ and
$n=(0,n)\in \tilde M^{\circ}$.
Both spaces  $\tilde N$ and $\tilde M^{\circ}$ are equipped with  
(essentially common) bilinear forms
\begin{align}
\{\tilde n, \tilde n'\}_{\tilde N}&:=
\{n, n'\} + \langle n',m\rangle - \langle n, m'\rangle,
\\
\label{eq:bi1}
\{\tilde m, \tilde m'\}_{\tilde M^{\circ}}&:=
- \{n, n'\} - \langle n',m\rangle + \langle n, m'\rangle,
\end{align}
where $\{\cdot, \cdot\}$ and $\langle\cdot, \cdot\rangle$
in the RHSs
are the ones in Section \ref{subsec:structure1}.
We also have the canonical paring
$\langle\cdot, \cdot\rangle: \tilde N \times \tilde M^{\circ}
\rightarrow (1/\delta_0)\bbZ$ defined by
\begin{align}
\langle (n,m),(m',n')\rangle:=
\langle n,m'\rangle
+
\langle n',m\rangle.
\end{align}

Let $\tilde p^*$ be the group homomorphism defined by
\begin{align}
\begin{matrix}
\tilde p^*\colon & N &\rightarrow & \tilde M^{\circ}\\
& n& \mapsto & \{\cdot, n \}_{\tilde N}
\end{matrix},
\quad
\{(n',m'),n\}_{\tilde N}=\{n',n\}+\langle n, m'\rangle.
\end{align}
Note that the map $\tilde p^*$ is injective.
Let $p^{*} \colon N \rightarrow   M^{\circ}$ be the one in Section
\ref{subsec:quantum2}.
Then, we have
\begin{align}
\label{eq:p11}
\tilde p^*(n)=(p^*(n), n)
\in \tilde M^{\circ}.
\end{align}
The following duality relation holds:
\begin{align}
\label{eq:dual1}
 \{\tilde p^*(n), \tilde m \}_{\tilde M^{\circ}}=\langle n, \tilde m \rangle.
\end{align}
Indeed, by \eqref{eq:p11},
\begin{align}
\begin{split}
 \{\tilde p^*(n), \tilde m \}_{\tilde M^{\circ}}
&=
\{(p^*(n),n),(m',n')\}_{\tilde M^{\circ}}
\\
&=- \{n,n'\}
-\langle n', p^*(n)\rangle
+\langle n,m'\rangle
\\
&=\langle n,m'\rangle
=\langle n,\tilde m\rangle.
\end{split}
\end{align}

\begin{rem}
\label{rem:compatible1}
The  bilinear form $\{\cdot, \cdot\}_{\tilde M^{\circ}}$  in \eqref{eq:bi1}  has the representation matrix
with respect to the basis $f_1$, \dots, $f_r$, $e_1$, \dots, $e_r$
of $\tilde M^{\circ}$ as
\begin{align}
\label{eq:Lambda1}
\Lambda:=
\begin{pmatrix}
O & - D\\
D & - DB
\end{pmatrix},
\quad
D=\mathrm{diag}(\d_1^{-1},\cdots,\d_r^{-1}).
\end{align}
Let $\tilde B$ be the principal extension of the matrix $B$,
\begin{align}
\tilde B =
\begin{pmatrix}
B\\
I
\end{pmatrix}.
\end{align}
They satisfy the relation
\begin{align}
\label{eq:dual2}
-\Lambda \tilde B =
\begin{pmatrix}
D\\
O
\end{pmatrix},
\end{align}
which is equivalent to the duality relation \eqref{eq:dual1}.
Such a pair $(\Lambda, \tilde B)$ is called a \emph{compatible pair} in \cite{Berenstein05b}.
 \end{rem}

Following \cite{Gross14},
let $\tilde P\subset \tilde M^{\circ}$ be a (not unique) monoid satisfying the following conditions:
\begin{itemize}
\item[(i).]
$\tilde P=\sigma \cap \tilde M^{\circ}$,
where $\sigma$ is a $2r$-dimensional strongly convex cone in $\tilde M^{\circ}_{\bbR}$.
\item[(ii).]
$\tilde p^*(e_1)$, \dots, $\tilde p^*(e_r)\in \tilde P$.
\item[(iii).]
$f_1$, \dots, $f_r\in \tilde P$.
\end{itemize}
For example, we take $\sigma$ as the cone generated by $f_1$, \dots, $f_r$, $\tilde p^*(e_1)$, \dots, $\tilde p^*(e_r)$,
which are a basis of $\tilde M^{\circ}$.
(In \cite{Gross14}, the condition (iii) was not assumed.)

Now we are ready to define the \emph{(quantum) principal $x$-representation} of $G$,
which is defined in a parallel way to the $y$-representation.

Let $x$ be a formal symbol.
Let $\bbQ(q^{ 1/\delta_0})[x]_q$ be the noncommutative and associative algebra over
$\bbQ(q^{ 1/\delta_0})$ with generators $x^{\tilde m}$ ($ \tilde m\in \tilde P)$
and the $q$-commutative relations
\begin{align}
\label{eq:P11}
x^{\tilde m}x^{\tilde m'}=
q^{\{\tilde m, \tilde m'\}_{\tilde M^{\circ}}}
x^{\tilde m + \tilde m'}.
\end{align}
Let $\calR_q(x)$  be the completion of $\bbQ(q^{ 1/\delta_0})[x]_q$
with respect to the maximal ideal  generated by 
$x^{\tilde m}$'s ($\tilde m \in \tilde P\setminus \{0\})$.
Any element of $\calR_q(x)$ is expressed as an infinite sum
\begin{align}
\label{eq:cxm1}
\sum_{\tilde m\in \tilde P}
c_{\tilde m} x^{\tilde m}
\quad
(c_{\tilde m} \in \bbQ(q^{ 1/\delta_0})).
\end{align}

We define the action of  $\widehat\frakg$ on $\calR_q(x)$ by
\begin{align}
\label{eq:Xact4}
X_n(x^{\tilde m}):= &\ 
[\langle n,\tilde m \rangle]_q
x^{\tilde m +\tilde p^*(n)}\\
=&\ \frac{q^{2\langle n,\tilde m \rangle}-1}{q-q^{-1}}x^{\tilde m} x^{\tilde p^*(n)},
\end{align}
where in the second equality we used
the duality relation \eqref{eq:dual1}.
It is easy to check that this is an action of $\widehat\frakg$ and also a derivation.
Thus,
it induces the action  of $G$ on  $\calR_q(x)$ defined by
\begin{align}
\label{eq:Xaction3}
(\exp X)(x^{\tilde m})=
\sum_{j=0}^{\infty}
\frac{1}{j!} X^j(x^{\tilde m})
\quad
(X\in \widehat\frakg).
\end{align}
Moreover, $\exp X$ is an algebra automorphism.
We call the resulting representation
 $\rho_x^{\rmpr}: G\rightarrow \mathrm{Aut}(\calR_q(x))$
 the \emph{(quantum) principal $x$-representation} of $G$.
It is always a faithful representation,
even when $\{\cdot, \cdot\}$ is degenerate.

\subsection{Action of dilogarithm elements}
Let us present an analogous result in
Section \ref{subsec:quantum1} for the principal $x$-representation.

Any $\bbQ(q^{1/\delta_0})$-power of a dilogarithm element  $\Psi_{a,b}[n]^c$ ($c\in \bbQ(q^{1/\delta_0}))$ acts on  $\calR_q(x)$ under $\rho_x^{\mathrm{pr}}$ as
\begin{align}
\label{eq:xmut1}
\begin{split}
\Psi_{a,b}[n]^c(x^{\tilde m})
&= \exp\Biggl(\, c \sum_{j=1}^{\infty}
 \frac{(-1)^{j+1}}{j[ja]_q} q^{jb} X_{jn}\Biggr)(x^{\tilde m})\\
&=x^{\tilde m} \exp\Biggl(
 c \sum_{j=1}^{\infty} 
 \frac{q^{2j\langle n,\tilde m \rangle}-1}{q^{2ja}-1} \frac{(-1)^{j+1}}{j }q^{ja}q^{jb} 
 x^{ j \tilde p^*(n)}
 \Biggr).
 \end{split}
 \end{align}
 Let us consider a parallel situation to \eqref{eq:qq0}
 and \eqref{eq:ymut2}.
Fix $n$ in \eqref{eq:xmut1},
and
let $a=1/s\d(n)$, where $s\in \bbZ_{>0}$ is a divisor of $\delta_0/\d(n)$.
Then, $\alpha:=\langle s \d(n)n,\tilde m \rangle$ is an integer,
and 
we have
 \begin{align}
 \label{eq:q11}
 \begin{split}
  \frac{q^{2j\langle n,\tilde m \rangle}-1}{q^{2ja}-1} 
  &=
  \frac{q^{2\alpha j/s\d(n)}-1}{q^{2j/s\d(n)}-1} 
  =
  \begin{cases}
\displaystyle
\sum_{p=0}^{\alpha-1}
q^{2jp/s\d(n)}
& \alpha > 0,
\\
0
&
\alpha=0,
\\
\displaystyle
-
\sum_{p=1}^{-\alpha}
q^{-2jp/s\d(n)}
& \alpha< 0.
\end{cases}
\end{split}
 \end{align}
Thus, we obtain
 \begin{align}
 \label{eq:xmut2}
 \begin{split}
 &\quad \
 \Psi_{1/s\d(n),b}[n](x^{\tilde m})   
=
\begin{cases}
\displaystyle
x^{\tilde m}\prod_{p=1}^{\alpha}
(1+q^{(2p-1)/s\d(n)}q^b  x^{ \tilde p^*(n)})
& \alpha> 0,
\\
x^{\tilde m}
&\alpha=0,
\\
\displaystyle
x^{\tilde m}\prod_{p=1}^{-\alpha}
(1+q^{-(2p-1)/s\d(n)}q^b  x^{ \tilde p^*(n)})^{-1}
& \alpha< 0.
\end{cases}
\end{split}
\end{align}
Observe that
this is indeed the automorphism part of the Fock-Goncharov decomposition
of mutations of the quantum  $x$-variables 
with principal coefficients in \cite{Berenstein05b}.
See also \cite{Mandel15,Davison19, Cheung20}
for an alternative approach, where the same representation is described through the {adjoint action
of the quantum dilogarithm} on $\calR_q(x)$ as in Remark \ref{rem:adjoint1}.

\subsection{Reduction of wall elements in a QCSD}
\label{subsec:reduction1}
Let $\calS_q(x)$ be the subset of $\calR_q(x)$
consisting of the elements
such that the coefficients $c_{\tilde m}$ in
\eqref{eq:cxm1} are  in $ \bbZ[q^{\pm1/\delta_0}]$.
Then, $\calS_q(x)$ is an algebra over $ \bbZ[q^{\pm1/\delta_0}]$ by the same relation
\eqref{eq:P11}.
The following lemma is a quantum analog of
the one in \cite[Appendix C.3 Step II]{Gross14},
\cite[Lemma III.5.8]{Nakanishi22a}.

\begin{lem}
\label{lem:actZ1}
A $\bbQ(q^{1/\delta_0})$-power of a
quantum dilogarithm element $\Psi_{a,b}[n]^c$
$(c\in \bbQ(q^{1/\delta_0}))$
acts on $\calS_q(x)$ 
if and only if $a=1/s\d(n)$ for a divisor $s$ of  $\delta_0/\d(n)$
and $c\in \bbZ[q^{\pm1/\delta_0}]$.
\end{lem}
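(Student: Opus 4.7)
The plan is to treat the two implications separately; the ``if'' direction reduces to a bookkeeping check using the explicit product formula \eqref{eq:xmut2}, while the ``only if'' direction requires a finer analysis of the coefficients in \eqref{eq:xmut1}. For the ``if'' direction I would assume $a = 1/(s\d(n))$ with $s$ a divisor of $\delta_0/\d(n)$ and $c \in \bbZ$. Formula \eqref{eq:xmut2} writes $\Psi_{a,b}[n](x^{\tilde m})$ as $x^{\tilde m}$ times a \emph{finite} product of factors $(1+q^{\pm(2p-1)/(s\d(n))+b}x^{\tilde p^*(n)})^{\pm 1}$. Since $s\d(n) \mid \delta_0$, every $q$-exponent lies in $(1/\delta_0)\bbZ$; expanding the inverse factors via $(1+t)^{-1} = \sum_{k\geq 0}(-t)^k$ keeps all coefficients in $\bbZ$; and left-multiplication by $x^{\tilde m}$ contributes only further $q$-shifts in $(1/\delta_0)\bbZ$. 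Hence $\Psi_{a,b}[n](x^{\tilde m}) \in \calS_q(x)$, and integer powers preserve this.

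For the ``only if'' direction I would first extract $c \in \bbZ$ by choosing a $\tilde m \in \tilde P$ with $\beta := \langle n, \tilde m\rangle = a$. Such a $\tilde m$ is available after (inessentially) enlarging $\tilde P$, since $\langle n, \cdot\rangle$ surjects $M^\circ$ onto $(1/\d(n))\bbZ$ and the preservation property is intrinsic to $(a,b,c,n)$. With $\beta = a$ the ratio $(q^{2j\beta}-1)/(q^{2ja}-1)$ in \eqref{eq:xmut1} equals $1$, the exponent telescopes to $c\log(1+q^{a+b}x^{\tilde p^*(n)})$, and
\begin{align*}
\Psi_{a,b}[n]^c(x^{\tilde m}) = x^{\tilde m}\,(1+q^{a+b}x^{\tilde p^*(n)})^c.
\end{align*}
The coefficient of $x^{k\tilde p^*(n)}$ on the right is $\binom{c}{k}q^{k(a+b)}$, so preservation forces $\binom{c}{k} \in \bbZ[q^{\pm 1/\delta_0}]$ for every $k \geq 1$. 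The case $k=1$ yields $c \in \bbZ[q^{\pm 1/\delta_0}]$; comparing the highest and lowest $q^{1/\delta_0}$-powers in $\binom{c}{k}$ as $k$ grows rules out any nonconstant $c$, and the classical fact that $\binom{c}{k}\in\bbZ$ for every $k$ implies $c \in \bbZ$ then closes this step.

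With $c \in \bbZ$ in hand, I would next vary $\tilde m \in \tilde P$ so that $\beta$ ranges over $(1/\d(n))\bbZ_{>0}$. The coefficient of $x^{\tilde m+\tilde p^*(n)}$ in $\Psi_{a,b}[n]^c(x^{\tilde m})$, read from \eqref{eq:xmut1}, equals $c\cdot[\beta]_q/[a]_q\cdot q^{\beta+b}$ up to a $q$-shift, so $[\beta]_q/[a]_q$ must lie in $\bbZ[q^{\pm 1/\delta_0}]$ for each such $\beta$ (the case $c=0$ being trivial). Specializing to $\beta = 1/\d(n)$ and writing $Q = q^{1/\delta_0}$, $A = \delta_0 a$, $D = \delta_0/\d(n)$, this ratio is $Q^{A-D}(Q^{2D}-1)/(Q^{2A}-1)$; cyclotomic factorization shows it lies in $\bbZ[Q^{\pm 1}]$ iff $2A \mid 2D$, i.e.\ $a\,\d(n) \mid 1$. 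Combined with $a \in (1/\delta_0)\bbZ_{>0}$, this forces $a = 1/(s\d(n))$ with $s \in \bbZ_{>0}$ and $s\d(n) \mid \delta_0$, i.e.\ $s \mid \delta_0/\d(n)$. The hardest part will be the interplay between the chosen monoid $\tilde P$ and the attainable values $\beta = \langle n, \tilde m\rangle$: the argument requires both the specific value $\beta = a$ and a dense enough subset of $(1/\d(n))\bbZ_{>0}$, and justifying that the relevant $\tilde m$ are (or can be made) available in $\tilde P$ is the genuinely delicate step.
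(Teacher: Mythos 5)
Your ``if'' direction is fine and matches the paper's (a one-line appeal to \eqref{eq:xmut2}). The ``only if'' direction, however, has a genuine gap in the very first step, and you have put it in the wrong order relative to the paper.

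You propose to extract $c\in\bbZ$ first, by choosing $\tilde m\in\tilde P$ with $\beta:=\langle n,\tilde m\rangle=a$ so that the ratio $(q^{2j\beta}-1)/(q^{2ja}-1)$ collapses to $1$. But the image of $\langle n,\cdot\rangle$ on $\tilde M^\circ$ is exactly $(1/\d(n))\bbZ$, as you yourself note, so $\beta=a$ is attainable only when $a\d(n)\in\bbZ$. The conclusion of the lemma is $a\d(n)=1/s$, which lies in $\bbZ$ precisely when $s=1$. Thus for every case with $s>1$ (which is the generic case whenever $\d(n)<\delta_0$), the required $\tilde m$ does not exist, and no enlargement of the monoid $\tilde P$ inside $\tilde M^\circ$ can help since the constraint is lattice-theoretic, not monoid-theoretic. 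This is not merely a ``delicate step'' to justify later; the specific value $\beta=a$ is simply out of reach, so the route to $(1+q^{a+b}x^{\tilde p^*(n)})^c$ and the binomial-coefficient analysis does not get off the ground. A related circularity appears in your second step: you invoke $c\in\bbZ$ to cancel $c$ and isolate $[\beta]_q/[a]_q$, but $c\in\bbZ$ was never actually established.

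The paper avoids this by reversing the two steps. One first pins down $a$: for a nonzero power to preserve $\calS_q(x)$, the coefficient in front of $x^{j\tilde p^*(n)}$ in the exponent in \eqref{eq:xmut1}, namely $c\,(q^{2j\langle n,\tilde m\rangle}-1)/(q^{2ja}-1)$ up to harmless factors, must lie in $\bbQ[q^{\pm1/\delta_0}]$ for every $j$ and every $\tilde m\in\tilde P$. Writing $a=1/(s\d(n))$ with $s\in\bbQ_{>0}$, if $s\langle\d(n)n,\tilde m\rangle\notin\bbZ$ for some $\tilde m$ then $(q^{2j\langle n,\tilde m\rangle}-1)/(q^{2ja}-1)$ has poles at roots of unity whose order grows with $j$; a single fixed $c\in\bbQ(q^{1/\delta_0})$ has only finitely many zeros and cannot cancel them all. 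Hence $s\langle\d(n)n,\tilde m\rangle\in\bbZ$ for all $\tilde m$; since $\tilde P$ contains a $\bbZ$-basis of $\tilde M^\circ$ and $\langle\d(n)n,\cdot\rangle$ attains $1$, this forces $s\in\bbZ$, and $a\in(1/\delta_0)\bbZ_{>0}$ then forces $s\mid\delta_0/\d(n)$. Only at this point is the finite product formula \eqref{eq:xmut2} available, and only then does the binomial argument you sketch (which is the right idea) yield $c\in\bbZ$. So the binomial analysis is salvageable, but it must come second, after the form of $a$ is known.
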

\begin{proof}
First,
we prove the if-part.
By the assumption $c\in \bbZ[q^{\pm1/\delta_0}]$,
$\Psi_{1/s\d(n),b}[n]^c$ is decomposed into a product of 
$\Psi_{1/s\d(n),b_i}[n]^{c_i}$ ($b_i\in (1/\delta_0)\bbZ$, $c_i\in \bbZ$).
By \eqref{eq:xmut2}, each factor acts on $\calS_q(x)$.
Thus, $\Psi_{a,b}[n]^c$ acts on $\calS_q(x)$.
Next,
we prove the only-if-part.
Assume that $\Psi_{a,b}[n]^c$ acts on $\calS_q(x)$.
Then,
by \eqref{eq:xmut1},
we have a necessary condition
\begin{align}
\label{eq:jterm1}
 c 
 \frac{q^{2j\langle n,\tilde m \rangle}-1}{q^{2ja}-1} \frac{(-1)^{j+1}}{j }q^{ja}q^{jb} 
 \in \bbQ[q^{\pm1/\delta_0}]
 \quad
 (j\in \bbZ_{>0}).
\end{align}
Let $a=1/s\d(n)$, where $s$ is a positive rational number.
Then,
\eqref{eq:jterm1} implies the condition
$s \langle \d(n)n,\tilde m \rangle \in \bbZ$ for any $\tilde m\in \tilde{P}$.
By the assumption for $\tilde P$, $\tilde P$ contains a basis of $\tilde{M}^{\circ}$.
It follows that
$s \langle \d(n)n,\tilde m \rangle \in \bbZ$ for any $\tilde m\in \tilde{M}^{\circ}$.
Due to the definition of $\delta(n)$, there is some $\tilde m\in \tilde{M}^{\circ}$
such that $\langle \d(n)n,\tilde m \rangle=1$. Thus, we have $s\in \bbZ$.
On the other hand,
let $t=\d_0/\d(n)\in \bbZ_{>0}$.
Then, $a=t/s\d_0$.
Since we require that $a \in (1/\d_0)\bbZ_{>0}$,
$s$ is a divisor of $t$.
This finishes the condition for $a$.
Meanwhile, the condition \eqref{eq:jterm1} also implies $c\in\bbQ[q^{\pm1/\delta_0}]$.
Then, 
$\Psi_{a,b}[n]^c$ is decomposed into a product of 
$\Psi_{1/s\d(n),b_i}[n]^{c_i}$ ($b_i\in (1/\delta_0)\bbZ$, $c_i\in \bbQ$).
By \eqref{eq:xmut2}, we have $c_i\in \bbZ$.
Thus, we have $c\in\bbZ[q^{\pm1/\delta_0}]$.
\end{proof}

Below, we only use the if-part of Lemma \ref{lem:actZ1}.

\begin{prop}
\label{prop:psi1}
Any QCSD $\frakD_{\fraks}^q$
is equivalent to a consistent scattering diagram whose wall elements have 
the form
\begin{align}
\label{eq:psipm1}
 \Psi_{1/\d(n),b}[n]^{ c}
 \quad
 (n\in N^+,\ b \in (1/\delta_0)\bbZ,\ c\in \bbZ).
 \end{align}
\end{prop}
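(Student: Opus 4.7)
My plan is to combine Proposition \ref{prop:generate1}, the fission formula \eqref{eq:fis1}, and Lemma \ref{lem:actZ1} with the standard order-by-order construction of the QCSD underlying Theorem \ref{thm:CSD1}. The incoming wall elements $\Psi_{1/\d_i}[e_i]$ already have the form \eqref{eq:psipm1} with $c=1$, and by the ``if''-direction of Lemma \ref{lem:actZ1} they preserve the $\bbZ[q^{\pm 1/\delta_0}]$-subalgebra $\calS_q(x)$. Consequently, every path-ordered product of wall elements manufactured out of them also preserves $\calS_q(x)$.

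Next, I would follow the Kontsevich--Soibelman / Gross--Hacking--Keel--Kontsevich inductive construction of the QCSD. Assume inductively that, modulo $G^{>\ell}$, every wall element of $\frakD_{\fraks}^q$ already has the form \eqref{eq:psipm1}. To extend to degree $\ell+1$, one considers, for each primitive $n_0\in N^+_{\rmpr}$, a discrepancy $\phi_{n_0}\in G^{\parallel}_{n_0}/G^{>\ell+1}$ that must be cancelled by adding walls whose normal direction is a multiple of $n_0$ and whose support is contained in $n_0^{\perp}$. This discrepancy is itself a path-ordered product of previously constructed wall elements, so by the induction hypothesis and the previous paragraph it preserves $\calS_q(x)$.

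The central step is to decompose such a $\phi_{n_0}$, modulo $G^{>\ell+1}$, as a finite product of factors $\Psi_{a,b}[jn_0]^c$ each of which individually preserves $\calS_q(x)$. Granting this, Lemma \ref{lem:actZ1} forces each factor to satisfy $a=1/(s\d(jn_0))$ for some divisor $s$ of $\delta_0/\d(jn_0)$ and $c\in\bbZ$; the fission formula \eqref{eq:fis1} applied with $p=s$ then rewrites each such factor as a product of $\Psi_{1/\d(jn_0),b'}[jn_0]^c$ with $c\in\bbZ$ and $b'\in(1/\delta_0)\bbZ$, producing the required form \eqref{eq:psipm1}.

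The main obstacle will be the factorization step above. I would proceed degree-by-degree inside $G^{\parallel}_{n_0}$: at the lowest non-trivial internal degree $j$, the leading coefficient of $\phi_{n_0}$ on $X_{jn_0}$ determines a scalar in $\bbQ(q^{1/\delta_0})$, and the preservation of $\calS_q(x)$ combined with the explicit formula \eqref{eq:xmut2} (tested against basis monomials $x^{\tilde m}$ for which $\langle jn_0,\tilde m\rangle$ attains prescribed integer values) should force this scalar into the range permitted by Lemma \ref{lem:actZ1}. One then peels off a single factor $\Psi_{1/\d(jn_0),b}[jn_0]^{\pm c}$ of the correct form, reducing to a strictly higher internal degree and iterating. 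This is the direct quantum analogue of the argument used in \cite[Appendix C.3, Step II]{Gross14} and \cite[Lemma III.5.8]{Nakanishi22a}, with $\bbZ$ replaced throughout by $\bbZ[q^{\pm 1/\delta_0}]$ and with $\d(n)$ playing the role that the primitive normalization played in the classical case.
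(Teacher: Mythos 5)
Your proposal follows essentially the same route as the paper: an order-by-order Kontsevich--Soibelman / Gross--Hacking--Keel--Kontsevich construction in which the incoming data and the induction hypothesis guarantee that the discrepancy at each perpendicular joint preserves $\calS_q(x)$, and Lemma \ref{lem:actZ1} (the quantum analogue of the GHKK Step II argument) then constrains the correcting wall elements to have the form \eqref{eq:psipm1}. The peeling-and-fission discussion you add is compatible with, and slightly more explicit than, the paper's terse reference to \cite[Prop.~C.13, Step 2]{Gross14}, but it does not constitute a genuinely different argument.
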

\begin{proof}
We follow Step 2 in the proof of \cite[Prop.~C.13]{Gross14}.
This is proved inductively on the degree of $n$, by considering the consistency relations around the perpendicular joints.
Here, we only describe the key point of the inductive step.
Suppose that a scattering diagram $\frakD_{\ell}$ has the
walls with wall elements of the form  \eqref{eq:psipm1}
with $\deg n \leq \ell$ and that,
for a small loop  $\gamma$ around  a perpendicular joint $\frakj$,
the consistency relation $\frakp_{\gamma,\frakD_{\ell}}=\rmid$ holds
 modulo $G^{>\ell}$.
 By Lemma \ref{lem:actZ1}, $\frakp_{\gamma,\frakD_{\ell}}$ acts on $\calS_q(x)$.
 By adding walls around $\frakj$ with wall elements
$\Psi_{a,b}[n]^c$ ($\deg n = \ell +1$)  to  $\frakD_{\ell}$,
we have a scattering diagram $\frakD_{\ell+1}$
 such that $\frakp_{\gamma,\frakD_{\ell+1}}=\rmid$  modulo $G^{>\ell+1}$.
 Since we consider the action of $\Psi_{a,b}[n]^c$ modulo $G^{>\ell+1}$,
 the second expression of  \eqref{eq:xmut1} is simplified as
 \begin{align}
 x^{\tilde m}
 \biggl(1 +
 c 
 \frac{q^{2\langle n,\tilde m \rangle}-1}{q^{2a}-1}q^{a}q^{b} 
 x^{  \tilde p^*(n)}
\biggr).
 \end{align}
 We may further concentrate on $\tilde m$ with  $\langle \d(n)n,\tilde m \rangle=1$,
because other cases are propotional to it with factors that are independent of $\Psi_{a,b}[n]^c$.
In particular, the action of $\Psi_{1/\delta(n),0}[n]^{c}$
on such $\tilde m$ is given by
 \begin{align}
 x^{\tilde m}
 (1 +
 c q^{a}
 x^{  \tilde p^*(n)}
).
 \end{align}
This means that,
for each $n\in N^+$ ($\deg n = \ell +1$),
the (possibly multiple) added wall elements $\Psi_{a,b_i}[n]^{c_i}$ ($i=1$, 2, \dots) in $\frakD_{\ell+1}$ is
replaced with a single wall element
$\Psi_{1/\delta(n),0}[n]^{c}$; moreover, we have $c\in \bbZ[q^{\pm 1/\delta_0}]$ by the induction assumption.
Then, it is  further decomposed
 into a product of
 $\Psi_{1/\delta(n),b}[n]^c$'s  ($c\in \bbZ$).
\end{proof}

\section{Application to   positivity and nonpositivity of QCSDs}
\label{sec:application1}

In the rest of the paper
we apply the pentagon relation,
together with the combinatorics of quantum data,
 to study the positivity and nonpositivity of QCSDs.

\subsection{Positivity of QCSDs}
\label{subsec:positivity1}

Let us briefly summarize known results concerning the {positivity} and nonpositivity of CSDs and QCSDs.

Let us introduce the following notion.
\begin{defn}
\label{defn:positiveCSD1}
A CSD $\frakD_{\fraks}$  is  a \emph{positive realization} if 
 the wall element of any wall has the form
\begin{align}
\label{eq:psipos0}
\Psi[n]^{c\delta(n)}
\quad
(n\in N^+, \ b \in (1/\delta_0)\bbZ,\ c\in \bbZ_{>0}).
\end{align}
We say that a CSD $\frakD_{\fraks}^q$  is \emph{positive}
if it is equivalent to a CSD that is a positive realization.
\end{defn}

The following fact is one of the main results of \cite{Gross14}.

\begin{thm}[{\cite[Theorem 1.13]{Gross14}}]
\label{thm:CSDpos1}
Every CSD $\frakD_{\fraks}$  is positive.
\end{thm}

For each CSD,
a \emph{theta function}  is defined by \emph{broken lines} \cite{Gross14}.
The definition of a broken line and Theorem \ref{thm:CSDpos1} immediately imply
the following positivity result.
  
  \begin{cor}[Positivity of theta functions \cite{Gross14}]
  For any CSD $\frakD_{\fraks}$, every  nonzero coefficients of any theta function is
a positive integer.
  \end{cor}

Based on Proposition \ref{prop:psi1},
we introduced a parallel notion for QCSDs.

\begin{defn}
\label{defn:positive1}
A QCSD $\frakD_{\fraks}^q$  is  a \emph{positive realization} if 
 the wall element of any wall has the form
\begin{align}
\label{eq:psipos1}
\Psi_{1/\d(n),b}[n]^c
\quad
(n\in N^+, \ b \in (1/\delta_0)\bbZ,\ c\in \bbZ_{>0}).
\end{align}
We say that a QCSD $\frakD_{\fraks}^q$  is \emph{positive}
if it is equivalent to a QCSD that is a positive realization.
\end{defn}

\begin{ex}
\label{ex:psipm2}
All examples of QCSDs in Section \ref{sec:ex1} are positive realizations.
\end{ex}

A \emph{quantum theta function} for a QCSD $\frakD_{\fraks}^q$ is defined by \emph{broken lines} \cite{Davison19}
in the same way as in the classical one \cite{Gross14}.
The following fact follows from the definition of a broken line.

\begin{prop}
[{\cite[Theorem 3.9]{Davison19}}]
\label{prop:posif1}
Suppose that $\frakD_{\fraks}^q$ is positive.
Then,
every quantum theta function of a QCSD $\frakD_{\fraks}^q$
has a formal power series of the form
\begin{align}
\label{eq:theta1}
\theta_{Q,\tilde m_0}=x^{\tilde m_0}  \biggl( 1+ \sum_{n\in N^+}
c_n x^{\tilde p^*(n)}
\biggr)
\quad
(c_n\in \bbZ_{\geq 0}[q^{\pm1/\delta_0}]).
\end{align}
\end{prop}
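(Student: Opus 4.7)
The plan is to use the broken line description of quantum theta functions from \cite{Davison19} and exploit the assumed positivity of $\frakD^q_{\fraks}$. Recall that
\[
\theta_{Q,\tilde m_0}=\sum_{\gamma}\mathrm{Mono}(\gamma),
\]
where the sum runs over all broken lines $\gamma$ for $\frakD^q_{\fraks}$ with endpoint $Q$ and asymptotic initial monomial $x^{\tilde m_0}$, and $\mathrm{Mono}(\gamma)$ is the monomial attached to the final segment of $\gamma$. The trivial broken line, a straight segment that bends nowhere, contributes exactly $x^{\tilde m_0}$, which supplies the leading ``$1$'' in the factorization \eqref{eq:theta1}.

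For a non-trivial broken line $\gamma$, I would proceed inductively along its sequence of bends. At the $i$-th bend, $\gamma$ crosses a wall $(\frakd_i,g_i)_{n_i}$ with $n_i\in N^+_{\rmpr}$, and by positivity $g_i=\Psi_{1/\d(n_i),b_i}[n_i]^{c_i}$ with $c_i\in\bbZ_{>0}$. If $\tilde m_i$ denotes the incoming momentum on segment $i$ and $\epsilon_i\in\{\pm 1\}$ the intersection sign \eqref{eq:int1}, the defining rule of broken lines selects the outgoing monomial on segment $i+1$ as a term of $g_i^{\epsilon_i}(x^{\tilde m_i})$, subject to the constraint that $\tilde m_{i+1}-\tilde m_i=k_i\tilde p^*(n_i)$ for some $k_i\in\bbZ_{>0}$. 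Since the broken line travels roughly in the direction opposite to its momentum, one has $\epsilon_i=\sgn\alpha_i$ with $\alpha_i:=\langle\d(n_i)n_i,\tilde m_i\rangle\in\bbZ$, and hence \eqref{eq:xmut2} furnishes the polynomial branch
\[
g_i^{\epsilon_i}(x^{\tilde m_i})=x^{\tilde m_i}\prod_{p=1}^{|\alpha_i|}\bigl(1+q^{\pm(2p-1)/\d(n_i)}q^{b_i}x^{\tilde p^*(n_i)}\bigr)^{c_i},
\]
rather than the alternating geometric-series branch. Every monomial coefficient in this expansion manifestly lies in $\bbZ_{\geq 0}[q^{\pm 1/\delta_0}]$, and the $q$-commutation \eqref{eq:P11} only contributes further factors $q^{\alpha}$ with $\alpha\in(1/\delta_0)\bbZ$ that preserve this property.

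Multiplying across all bends of $\gamma$, the final monomial takes the form $c_\gamma\,x^{\tilde m_0+\tilde p^*(n)}$ with $n=\sum_i k_i n_i\in N^+$ and $c_\gamma\in\bbZ_{\geq 0}[q^{\pm 1/\delta_0}]$. Summing over all broken lines---a well-defined formal series by the finiteness condition on $\frakD^q_{\fraks}$---yields the stated form \eqref{eq:theta1}. The main obstacle is the careful verification of the sign-matching claim $\epsilon_i=\sgn\alpha_i$ along any admissible broken line: this is essentially built into the broken line definition, but must be checked explicitly against the duality relation \eqref{eq:dual1} to rule out the alternating-sign branch of \eqref{eq:xmut2} and thus confirm that each wall crossing is genuinely positivity-preserving.
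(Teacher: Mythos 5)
The paper gives no independent argument here: it simply cites \cite[Theorem 3.9]{Davison19}, which adapts the broken-line argument of \cite[Theorem~1.13]{Gross14}, and your sketch is a correct account of exactly that argument, with the sign-matching $\epsilon_i=\sgn\langle\d(n_i)n_i,\tilde m_i\rangle$ correctly identified as the point that forces the polynomial (rather than geometric-series) branch of \eqref{eq:xmut2} at each bend. The only minor imprecision is that for a wall $(\frakd_i,g_i)_{n_i}$ with $n_i\in N^+_{\rmpr}$ the positivity hypothesis gives $g_i=\Psi_{1/\d(m_i),b_i}[m_i]^{c_i}$ with $m_i$ a (possibly non-primitive) positive multiple of $n_i$, not $m_i=n_i$ itself, but this does not affect the argument since \eqref{eq:xmut2} applies verbatim with $m_i$ in place of $n_i$.
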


Observing the parallelism between the CSDs and QCSDs so far,
it is natural to expect that Theorem \ref{thm:CSDpos1} also holds for QCSDs.
However, this is not true in general.
We say that a QCSD is \emph{skew-symmetric} if its initial exchange matrix $B$ is skew-symmetric.
The following positivity result is known.
\begin{thm}[{\cite[Theorem 2.15]{Davison19}}]
\label{thm:DM1}
Any skew-symmetric QCSD $\frakD_{\fraks}^q$ is positive.
\end{thm}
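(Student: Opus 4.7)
The plan is to combine the structural reduction of Proposition \ref{prop:psi1} with the wall-crossing machinery for cohomological/motivic Donaldson--Thomas (DT) invariants of symmetric quivers, following the strategy of Davison--Mandel. The pentagon relation and the combinatorics of quantum data developed in Sections \ref{sec:qunatum1}--\ref{sec:principal1} provide the algebraic side; the positivity itself must be imported from DT theory.

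First I would normalize. In the skew-symmetric case one may take $\d_1=\cdots=\d_r=1$, so $\d(n)=1$ for every $n\in N^+$ and $\delta_0=1$. Proposition \ref{prop:psi1} then lets us assume each wall element has the canonical form $\Psi_{1,b}[n]^c$ with $b\in\bbZ$ and $c\in\bbZ$. A further reduction using the skew-symmetry (where there is no asymmetric shift obstruction analogous to what appears in Section \ref{subsec:rank2} for $B_2$ or $G_2$) allows us to absorb the shift $b$ into the normal direction, so wall elements reduce to $\Psi_1[n]^c$. The theorem then amounts to showing that $c$ may be taken to be a \emph{positive} integer for every outgoing wall in a consistent completion.

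Next I would set up the dictionary with DT theory. Identify $B=(\{e_i,e_j\})$ with the adjacency matrix of a symmetric quiver $Q_B$, and consider a generic potential $W$ whose Jacobi algebra $J(Q_B,W)$ has the correct refined DT spectrum. The Kontsevich--Soibelman factorization of the product of quantum dilogarithms attached to semistable objects is indexed precisely as the set of walls of $\frakD_{\fraks}^q$, and the consistency equations of the QCSD coincide with the Kontsevich--Soibelman wall-crossing formulas. The initial walls $\Psi_1[e_i]$ at $e_i^{\perp}$ correspond to the contributions of the simple representations, fixing the normalization. Finally, apply the Efimov--Davison--Meinhardt integrality/positivity theorem: for a symmetric quiver, the refined DT invariant $\Omega_n(q)$ lies in $\bbZ_{\geq 0}[q^{\pm 1/2}]$, equivalently it is the Poincar\'e polynomial of a Lefschetz-decomposable cohomological DT sheaf. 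Transporting this through the dictionary shows that each canonical wall element is $\Psi_1[n]$ raised to a nonnegative integer, which is the required positivity in the sense of Definition \ref{defn:positive1}.

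The main obstacle is Step~3: the pentagon relation of Theorem \ref{thm:pent1} by itself is not strong enough to force positivity in rank $\geq 3$, because perpendicular joints involve arbitrarily many interacting walls whose rearrangements depend on delicate cancellations, as already foreshadowed by the affine rank~2 computations in Section \ref{subsubsec:rank2affine}. The positivity really requires the geometric input of DT integrality. The technical work on the algebraic side is to match normalizations of the quantum dilogarithm $\bfPsi_q(x)$ between the QCSD and the DT conventions, and to verify that symmetry of the quiver is precisely the skew-symmetry of $B$ needed on the QCSD side; once these identifications are in place, the positivity is inherited from the cohomological DT sheaf. I expect that a purely pentagon-based proof, avoiding DT theory, is not feasible in general, although it is essentially what underlies the rank~2 derivations given in Section \ref{subsec:rank2}.
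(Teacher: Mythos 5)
Your high-level strategy (appeal to Davison--Mandel's DT--theoretic positivity for symmetric quivers via the Kontsevich--Soibelman wall-crossing dictionary) is exactly what the paper relies on: the theorem is stated with a citation to \cite[Theorem 2.15]{Davison19}, and the remark following it explicitly attributes the proof to DT theory on quiver representations. However, two of the concrete reduction steps you propose are false, and I want to flag them because they are precisely the kind of simplification that fails in the skew-symmetric case.

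First, ``skew-symmetric'' here means the exchange matrix $B$ with $b_{ij}=\{\d_i e_i,e_j\}$ is skew-symmetric; it does \emph{not} allow you to normalize $\d_1=\cdots=\d_r=1$. The affine example $A_1^{(1)}$ with $(\d_1,\d_2)=(2,2)$ has $B=\begin{pmatrix}0&-2\\2&0\end{pmatrix}$, which is skew-symmetric, yet every primitive $n\in N^+_{\rmpr}$ satisfies $\d(n)=2$ and $\delta_0=2$. Changing to $\d_i=1$ with a rescaled $\{\cdot,\cdot\}$ gives a genuinely different fixed data and a different QCSD (the incoming wall elements $\Psi_{1/\d_i}[e_i]$ change). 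Second, the shift parameters $b$ cannot be absorbed away in the skew-symmetric case: in the same $A_1^{(1)}$ example, the consistency relation \eqref{eq:a115} requires walls with elements $\Psi_{2^{j-1},\pm 2^{j-1}}[2^j(1,1)]$ carrying non-zero shifts $b=\pm 2^{j-1}$, and there is no equivalent positive realization with all $b=0$. The correct reduction, furnished by Proposition \ref{prop:psi1} as stated, is that wall elements take the form $\Psi_{1/\d(n),b}[n]^{\pm c}$ with general $\d(n)\in\bbZ_{>0}$ and $b\in(1/\delta_0)\bbZ$; what Davison--Mandel's result supplies is that the exponents can always be taken nonnegative. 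The rest of your proposal --- matching quantum dilogarithm conventions, identifying walls with semistable classes, and importing positivity of refined DT invariants of symmetric quivers --- is the right picture and agrees with what the paper cites, so the fix is simply to drop the two unwarranted normalizations and run the dictionary with the general $\Psi_{1/\d(n),b}[n]^c$ form.
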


The proof of \cite{Davison19} is based on the Donaldson-Thomas theory
on quiver-representations, which applies exclusively to the skew-symmetric case.

On the other hand, the following example suggests the abundance of the nonpositive QCSDs.
\begin{ex}
[\cite{Lee14,Cheung20}]
\label{ex:qgb1}
Consider a cluster algebra of rank 2 with the initial exchange matrix
\begin{align}
\label{eq:bmat1}
B=\begin{pmatrix}
0 & -\d_1\\
\d_2 & 0
\end{pmatrix}
\quad
(\d_1,\d_2\in \bbZ_{>0}).
\end{align}
The (classical) \emph{greedy elements} were introduced in \cite{Lee12}.
Later they were identified with the theta functions  \cite{Cheung15}.
Meanwhile,  the \emph{quantum greedy elements} were studied in \cite{Lee14}.
It turned out that the positivity of the coefficients of the quantum greedy elements \emph{fails}
for $(\d_1,\d_2)\allowbreak=(2,3)$, $(2,5)$, $(3,4)$, $(4,6)$, for example.
Motivated by this result, 
the nonpositivity of \emph{quantum theta functions} for $(\d_1,\d_2)=(2,3)$ was shown by \cite{Cheung20}.
Thus, the corresponding QCSD is \emph{nonpositive} due to Proposition \ref{prop:posif1}.
Also, it was conjectured \cite[Conj.~14]{Lee14} that 
the positivity holds for the quantum greedy basis
if $\d_1 | \d_2$ or $\d_2 | \d_1$.
\end{ex}

Thus, there is an intriguing discrepancy of the positivity
between CSDs and QCSDs especially in the \emph{nonskew-symmetric case}.
Then, we have the following natural questions.
\begin{enumerate}
\item[Q1:]
Exactly when and why does the positivity of a nonskew-symmetric QCSD
hold or fail?
\item[Q2:] When the positivity fails for a QCSD, how 
is the positivity restored in the classical limit $q\rightarrow 1$?
\end{enumerate}

Below we study the above problems for the nonskew-symmetric QCSDs of rank 2
 using the pentagon relation.
 Before that, let us present one immediate and important consequence of the positivity of QCSDs.
 We say that a CSD $\frakD_{\fraks}$ is \emph{with minimal support} if
 $\rmSupp(\frakD_{\fraks})$ is minimal among all CSDs that are equivalent to $\frakD_{\fraks}$.
 If  $\frakD_{\fraks}$ is a positive realization, then $\frakD_{\fraks}$ is with minimal support
 because it is impossible to cancel the wall elements of any walls due to the positivity.
 
 \begin{prop}
 Suppose that a QCSD $\frakD_{\fraks}^q$ is a positive realization.
 Then,  $\rmSupp(\frakD_{\fraks}^q)$ coincides with  $\rmSupp(\frakD_{\fraks})$,
 where $\frakD_{\fraks}$ is a CSD with minimal support.
  \end{prop}
\begin{proof}
By replacing each wall element $\Psi_{1/\delta(n),b}[n]^c$ in $\frakD_{\fraks}^q$ with $\Psi[n]^{c\delta(n)}$,
we obtain a CSD  $\frakD_{\fraks}$,
which is a positive realization.
\end{proof}

\subsection{Rank 2 examples}
\label{subsec:rank21}

Let us focus on the rank 2 case.
As mentioned in Example \ref{ex:psipm2},
all examples with $\d_1\d_2 \leq 4$ in Section \ref{sec:ex1} are positive.
Let us consider examples with $\d_1\d_2 \geq 5$,
namely, QCSDs of nonaffine infinite type.
We use the same conventions and notations
in Section \ref{sec:ex1}.

Let us compute in the lowest approximation, namely, modulo $G^{>2}$.

(a). $(\d_1,\d_2)=(2,3)$.
This is the case where the nonpositivity of a quantum greedy element \cite{Lee14} and (quantum) theta function \cite{Cheung20} was shown.
We have
\begin{align}
\label{eq:23case}
\begin{split}
\begin{bmatrix}
0\\
1
\end{bmatrix}
_{\frac{1}{3},0}
\begin{bmatrix}
1\\
0
\end{bmatrix}
_{\frac{1}{2},0}
& \equiv
\begin{bmatrix}
1\\
0
\end{bmatrix}
_{\frac{1}{2},0}
\begin{bmatrix}
1\\
1
\end{bmatrix}
_{1,-\frac{7}{6}}
\begin{bmatrix}
1\\
1
\end{bmatrix}
_{1,-\frac{1}{2}}
\begin{bmatrix}
1\\
1
\end{bmatrix}
_{1,-\frac{1}{6}}
\begin{bmatrix}
1\\
1
\end{bmatrix}
_{1,\frac{1}{6}}
\begin{bmatrix}
1\\
1
\end{bmatrix}
_{1,\frac{1}{2}}
\begin{bmatrix}
1\\
1
\end{bmatrix}
_{1,\frac{7}{6}}
\begin{bmatrix}
0\\
1
\end{bmatrix}
_{\frac{1}{3},0}
\\
&=
\begin{bmatrix}
1\\
0
\end{bmatrix}
_{\frac{1}{2},0}
\begin{bmatrix}
1\\
1
\end{bmatrix}
_{\frac{1}{6},-\frac{1}{3}}
\begin{bmatrix}
1\\
1
\end{bmatrix}
_{\frac{1}{6},0}^{-1}
\begin{bmatrix}
1\\
1
\end{bmatrix}
_{\frac{1}{6},\frac{1}{3}}
\begin{bmatrix}
0\\
1
\end{bmatrix}
_{\frac{1}{3},0}.
\end{split}
\end{align}
Thus, we confirmed again that it is nonpositive
in the sense of Definition \ref{defn:positive1}.
Moreover, in the classical limit, the negative power in the last expression
 is cancelled
by the positive power,
and we obtain $\Psi[\bfe_1]^2\Psi[(1,1)]^6\Psi[\bfe_2]^3$.
This clarifies the mechanism how the positivity is restored in the classical limit.
Thus, we have a clear answer to Question 2 in Section \ref{subsec:positivity1}.
Moreover,
comparing the  computation in \eqref{eq:23case} with the much more complicated one in \cite[Appendix B]{Cheung20},
we observe that the pentagon relation significantly simplifies the analysis
(just in two lines in \eqref{eq:23case}).

(b). $(\d_1,\d_2)=(2,4)$.
By a similar computation, we have
\begin{align}
\begin{split}
\begin{bmatrix}
0\\
1
\end{bmatrix}
_{\frac{1}{4},0}
\begin{bmatrix}
1\\
0
\end{bmatrix}
_{\frac{1}{2},0}
\equiv
\begin{bmatrix}
1\\
0
\end{bmatrix}
_{\frac{1}{2},0}
\begin{bmatrix}
1\\
1
\end{bmatrix}
_{\frac{1}{4},-\frac{1}{2}}
\begin{bmatrix}
1\\
1
\end{bmatrix}
_{\frac{1}{4},\frac{1}{2}}
\begin{bmatrix}
0\\
1
\end{bmatrix}
_{\frac{1}{4},0}.
\end{split}
\end{align}
This is positive up to $G^{\leq 2}$.

(c). $(\d_1,\d_2)=(3,3)$.
This is skew-symmetric, thus,  positive
by Theorem \ref{thm:DM1}.
Indeed, we have
\begin{align}
\begin{split}
\begin{bmatrix}
0\\
1
\end{bmatrix}
_{\frac{1}{3},0}
\begin{bmatrix}
1\\
0
\end{bmatrix}
_{\frac{1}{3},0}
\equiv
\begin{bmatrix}
1\\
0
\end{bmatrix}
_{\frac{1}{3},0}
\begin{bmatrix}
1\\
1
\end{bmatrix}
_{\frac{1}{3},-\frac{2}{3}}
\begin{bmatrix}
1\\
1
\end{bmatrix}
_{\frac{1}{3},0}
\begin{bmatrix}
1\\
1
\end{bmatrix}
_{\frac{1}{3},\frac{2}{3}}
\begin{bmatrix}
0\\
1
\end{bmatrix}
_{\frac{1}{3},0}.
\end{split}
\end{align}
This is positive up to $G^{\leq 2}$ as it should be.

(d). $(\d_1,\d_2)=(3,4)$.
This is similar to  Example (a).
We have
\begin{align}
\begin{split}
\begin{bmatrix}
0\\
1
\end{bmatrix}
_{\frac{1}{3},0}
\begin{bmatrix}
1\\
0
\end{bmatrix}
_{\frac{1}{4},0}
\equiv
\begin{bmatrix}
1\\
0
\end{bmatrix}
_{\frac{1}{4},0}
\begin{bmatrix}
1\\
1
\end{bmatrix}
_{\frac{1}{12},-\frac{1}{2}}
\begin{bmatrix}
1\\
1
\end{bmatrix}
_{\frac{1}{12},-\frac{1}{3}}
^{-1}
\begin{bmatrix}
1\\
1
\end{bmatrix}
_{\frac{1}{12},0}
\begin{bmatrix}
1\\
1
\end{bmatrix}
_{\frac{1}{12},\frac{1}{3}}
^{-1}
\begin{bmatrix}
1\\
1
\end{bmatrix}
_{\frac{1}{12},\frac{1}{2}}
\begin{bmatrix}
0\\
1
\end{bmatrix}
_{\frac{1}{3},0}.
\end{split}
\end{align}
This is nonpositive.

\subsection{Nonpositivity result}
\label{subsec:nonpositivity1}
Consider a cluster algebra
of rank 2
such that its exchange matrix $B$
has the form in \eqref{eq:bmat1}.
As stated in Example \ref{ex:qgb1},
it was conjectured in  \cite[Conj.~14]{Lee14} that  the positivity
of the quantum greedy basis 
holds if $\d_1|\d_2$ or $\d_2|\d_1$.
The following result is motivated by the converse of this conjecture.

\begin{thm} 
\label{thm:nonpos1}
Let $\frakD^q_{\fraks}$ be a QCSD of rank 2
such that $b_{12}\neq 0$ for $B$ in \eqref{eq:B1}.
Then, $\frakD^q_{\fraks}$ is nonpositive if
$\d_1 \nmid \d_2$ and $\d_2 \nmid \d_1$.
\end{thm}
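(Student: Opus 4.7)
The plan is to derive a contradiction by computing, modulo $G^{>2}$, the outgoing-wall factor with normal $e_1+e_2$ in the consistency relation for $\frakD^q_{\fraks}$. I normalize as in Section \ref{subsec:rank2} with $\{e_2,e_1\}=1$, write $d=\gcd(\d_1,\d_2)$, $u=\d_1/d$, $v=\d_2/d$, and $L=duv=\mathrm{lcm}(\d_1,\d_2)=\delta_0$; the hypothesis $\d_1\nmid\d_2$ and $\d_2\nmid\d_1$ becomes $\gcd(u,v)=1$ with $u,v\geq 2$. Since $\d(e_1+e_2)=L$ and $\delta_0/\d(e_1+e_2)=1$, Lemma \ref{lem:actZ1} together with Proposition \ref{prop:psi1} implies that any wall of $\frakD^q_{\fraks}$ with normal $(1,1)$ may, up to equivalence, be taken in the form $\Psi_{1/L,b}[(1,1)]^{\pm c}$ with $c\in\bbZ_{>0}$. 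Moreover, modulo $G^{>2}$, only $(1,1)$-walls contribute to the outgoing part of the consistency relation, since $e_1+e_2$ is the unique primitive element of $N^+$ of degree $\le 2$ beyond $e_1,e_2$. This rank-2 rigidity is the key step I expect to be the main (but here easily handled) obstacle: it is what guarantees that the $(e_1+e_2)$-component of the consistency relation is determined unambiguously by the two-sided computation below, with no possibility of cancellation against walls of other normals.

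For the computation, I first expand $\Psi_{1/\d_i}[e_i]=\prod_{t=1}^{\d_i}\Psi_{1,(2t-\d_i-1)/\d_i}[e_i]$ via fission \eqref{eq:fis1}, and then apply the pentagon relation \eqref{eq:pent1} with $c=\{e_2,e_1\}=1$ repeatedly to move all $e_2$-factors past the $e_1$-factors, exactly as in the examples of Section \ref{subsec:rank21}. The $\d_1\d_2$ crossings yield the commuting middle product $\prod_{s,t}\Psi_{1,b(s,t)}[(1,1)]$ with $b(s,t)=(2s-\d_2-1)/\d_2+(2t-\d_1-1)/\d_1$, which modulo $G^{>2}$ equals $\exp(T(q)X_{e_1+e_2})$ with $T(q)=\sum_{s,t}q^{b(s,t)}$. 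Setting $Q=q^{1/L}$ and summing the two geometric series gives
\[
T(Q)=Q^{(u+v)-2L}\cdot\frac{(Q^{2L}-1)^2}{(Q^{2u}-1)(Q^{2v}-1)}.
\]
If $\frakD^q_{\fraks}$ were a positive realization, the same middle contribution would equal, modulo $G^{>2}$, the expression $\exp\bigl([L]_Q\sum_k c_kQ^{b_k'L}X_{e_1+e_2}\bigr)$ for some $c_k\in\bbZ_{>0}$, using $1/[1/L]_q=[L]_Q$. Equating the two expressions yields
\[
P(Q):=\frac{T(Q)}{[L]_Q}=Q^{(u+v)-L-1}\cdot\frac{(Q^{2L}-1)(Q^2-1)}{(Q^{2u}-1)(Q^{2v}-1)}=\sum_k c_kQ^{b_k'L},
\]
so $P(Q)$ must be a Laurent polynomial in $Q$ with nonnegative integer coefficients.

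The contradiction will come from isolating a single negative coefficient. Setting $x=Q^2$ and $f(x)=(x^L-1)(x-1)/[(x^u-1)(x^v-1)]$, I have $P(Q)=Q^{(u+v)-L-1}f(x)$. Coprimality $\gcd(u,v)=1$ gives $\mathrm{lcm}(x^u-1,x^v-1)=(x^u-1)(x^v-1)/(x-1)$, which divides $x^{uv}-1$ and hence $x^L-1$, so $f\in\bbZ[x]$. Reducing the identity $f(x)(x^u-1)(x^v-1)=(x^L-1)(x-1)$ modulo $x^2$ and using $u,v\geq 2$ (so $L\geq 4$), the right-hand side is $1-x$ while $(x^u-1)(x^v-1)\equiv 1$, forcing $f(x)\equiv 1-x\pmod{x^2}$ and in particular $f_1=-1$. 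Thus the coefficient of $Q^{(u+v)-L+1}$ in $P(Q)$ equals $-1$, contradicting the required nonnegativity; therefore $\frakD^q_{\fraks}$ is nonpositive.
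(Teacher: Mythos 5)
Your proof is correct, and the overall strategy is the same as the paper's: normalize so that $\{e_2,e_1\}=1$, reduce modulo $G^{>2}$, expand $\Psi_{1/\d_2}[e_2]\Psi_{1/\d_1}[e_1]$ by fission and the pentagon relation to isolate the $(e_1+e_2)$-component, then show the positivity requirement fails at this degree. Where the two diverge is the final step. The paper argues directly on the set of shift parameters $b(i,j)$: the maximal numerator $m_0$ is attained uniquely at $(i,j)=(1,1)$, fusion into $\Psi_{1/\d((1,1)),b}[(1,1)]^c$ forces the value $m_0-2$ to also appear, and that is impossible unless $\d_1'=1$ or $\d_2'=1$ --- a combinatorial "gap" argument at the top end of the spectrum. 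You instead package the same data into the generating Laurent polynomial $T(Q)$, divide out $[L]_Q$ (which is exactly the $q$-analogue of the length of the fusion progression), and detect a negative coefficient $f_1=-1$ by reducing the defining identity modulo $x^2$ at the bottom end. Since $f(x)$ is palindromic, the two ends are equivalent, so your contradiction is literally the mirror image of the paper's. Your version is arguably cleaner and more mechanical: the integrality and nonnegativity of $P(Q)$ are packaged into a single divisibility-and-congruence computation, and the coprimality hypothesis $\gcd(u,v)=1$, $u,v\ge 2$ enters transparently both to show $f\in\bbZ[x]$ and to kill the degree-$\le 1$ terms of $(x^u-1)(x^v-1)$. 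One small point worth making explicit (you gesture at it with "rank-2 rigidity"): in rank 2, all walls with a given primitive normal share the same ray, so up to equivalence they consolidate into a single product, and the degree-$2$ part of the consistency relation is entirely carried by the unique primitive $e_1+e_2$; this is what justifies reading $T(Q)$ off unambiguously.
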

\begin{proof}
Let us show that  under the condition the nonpositivity already emerges at degree 2
just as above Examples (a) and (d).
We continue to assume that  $\{e_2,e_1\}=1$.
By a similar consideration to the examples in Section \ref{subsec:rank21},
we have, modulo $G^{>2}$,
\begin{gather}
\label{eq:ee1}
\begin{split}
\begin{bmatrix}
0\\
1
\end{bmatrix}
_{\frac{1}{\d_2},0}
\begin{bmatrix}
1\\
0
\end{bmatrix}
_{\frac{1}{\d_1},0}
&=
\Biggl(
\prod_{j=1}^{\d_2}
\begin{bmatrix}
0\\
1
\end{bmatrix}
_{1,\frac{\d_2+1-2j}{\d_2}}
\Biggr)
\Biggl(
\prod_{i=1}^{\d_1}
\begin{bmatrix}
1\\
0
\end{bmatrix}
_{1,\frac{\d_1+1-2i}{\d_1}}
\Biggr)
\\
&
\equiv
\begin{bmatrix}
1\\
0
\end{bmatrix}
_{\frac{1}{\d_1},0}
\Biggl(
\prod_{i=1}^{\d_1}
\prod_{j=1}^{\d_2}
\begin{bmatrix}
1\\
1
\end{bmatrix}
_{1,b(i,j)}
\Biggr)
\begin{bmatrix}
0\\
1
\end{bmatrix}
_{\frac{1}{\d_2},0},
\end{split}
\\
b(i,j)=(2\d_1\d_2 + (1-2j)\d_1 +(1-2i) \d_2)/\d_1\d_2.
\end{gather}
Let $c=\mathrm{gcd}(\d_1,\d_2)$, and let
$\d_1=\d'_1 c$, $\d_2=\d'_2 c$.
Then, $\d((1,1))=\d'_1\d'_2c$,
and we have
\begin{align}
b(i,j)=(2\d'_1\d'_2c + \d'_1 +\d'_2 - 2\d'_1 j - 2\d'_2 i)/\d((1,1)).
\end{align}
Let $m_0$ be the largest number of  the numerator of $b(i,j)$, which is attained with $i=j=1$.
The exponents of $q$ in $\Psi_{1/\d((1,1)),b}[(1,1)]$
align with the interval $2/\d((1,1))$.
Thus,
if the positivity holds, 
we should also have the number  $m_0-2$
 in the numerator.
 This occurs only when $\d'_1=1$ or $\d'_2=1$.
This means $\d_1\mid \d_2$ or $\d_2 \mid \d_1$.
\end{proof}

The above result implies the nonpositivity of (quantum) theta functions.

\begin{prop}
\label{prop:pospos1}
Under the same condition of Theorem \ref{thm:nonpos1},
there is a (quantum) theta function such that at least one of its coefficients is negative in  $\bbZ[q^{\pm1/\delta_0}]$.
\end{prop}

\begin{proof}
Consider the theta function $\theta_{Q,\tilde m_0}$,
where $Q$ is in the first quadrant and  $m_0=(\delta_1,-\delta_2-1)$.
Then, there is a wall $\bfw$ in $\frakD_{\fraks}^s$ whose wall element
is a negative power of $\Psi_{1/\d((1,1)),b}[(1,1)]$.
Then,
the broken line $\gamma$ bending only at $\bfw$  contributing to $\theta_{Q,\tilde m_0}$
 with a negative coefficient in $\bbZ[q^{\pm1/\delta_0}]$.
See Figure \ref{fig:bl1}.
Moreover,  the contribution of $\gamma$ is not canceled by any other broken lines.
Indeed, such broken lines should bend exactly once both at $\bfe_1^{\perp}$ and $\bfe_2^{\perp}$,
but this is impossible as seen in the figure.
\end{proof}

 \begin{figure}
\begin{tikzpicture}[scale=2]
\draw  (-1,0) -- (1,0);
\draw (0,1) -- (0,-1);
\draw (0,0) -- (0.75,-1);
\draw [thick] (0.66,-1) -- (0.3,-0.4);
\draw [thick] (0.3,0.3) -- (0.3,-0.4);
\draw [dashed] (0.48,-1) -- (0,-0.2);
\draw [dashed] (-0.6,0) -- (0,-0.2);
\draw [dashed] (-0.6,0) -- (-0.6,0.6);
\draw  (0.5,0.3)  node {$Q$};
\draw  (0.66,-1.1)  node {$\gamma$};
\draw [fill] (0.3,0.3) circle [radius=0.03];
\end{tikzpicture}
\caption{Broken line for $\theta_{Q,\tilde m_0}$ in Proposition \ref{prop:pospos1} for $(\delta_1,\delta_2)=(3,4)$.
The dashed broken line fails to reach $Q$.}
\label{fig:bl1}
\end{figure}

We have the following corollary of Theorem \ref{thm:nonpos1}.
\begin{cor}
\label{cor:nonpos2}
Let $\frakD^q_{\fraks}$ be a QCSD of any rank,
and let $B$ be the initial exchange matrix in \eqref{eq:B1}.
Then, $\frakD^q_{\fraks}$ is nonpositive
if there is a pair $i\neq j$ with $b_{ij}\neq 0$ such that 
 $\d_i\nmid  \d_j$ and  $\d_j\nmid \d_i$.
 \end{cor}
\begin{proof}
Suppose that
 $\d_i \nmid \d_j$ holds for a pair  $i\neq j$ with $b_{ij}\neq 0$.
 We consider a QCSD whose wall elements are
 in the form \eqref{eq:psipm1}.
 Then,
the proof of Theorem \ref{thm:nonpos1}
tells that there is some wall whose wall element is a negative power of $\Psi_{1/\d(e_i+e_j),b}[e_i+e_j]$.
Moreover, due to the construction of a CSD in \cite[Appendix C.1]{Gross14},
other wall elements never cancel the above wall element.
\end{proof}

\subsection{Positivity up to   degree 4}
\label{subsec:positivity2}
We are left with the problem to determine
whether the converses of Theorem \ref{thm:nonpos1} and 
Corollary
 \ref{cor:nonpos2} hold or not.
 Here, we concentrate on the rank 2 case
 and examine the positivity up to degree 4.

 First, let us give some examples.
\begin{ex}
\label{ex:further1}
(a).
Let  $(\d_1,\d_2)=(1,5)$.
By a direct computation with the pentagon relation,
we verify the following relation modulo $G^{>6}$:
\begin{align*}
\begin{split}
\begin{bmatrix}
0\\
1
\end{bmatrix}
_{\frac{1}{5},0}
\begin{bmatrix}
1\\
0
\end{bmatrix}
_{1,0}
&
\equiv
\begin{bmatrix}
1\\
0
\end{bmatrix}
_{1,0}
\begin{bmatrix}
1\\
1
\end{bmatrix}
_{\frac{1}{5},0}
\begin{bmatrix}
2\\
3
\end{bmatrix}
_{\frac{1}{5},-\frac{2}{5}}
\begin{bmatrix}
2\\
3
\end{bmatrix}
_{\frac{1}{5},\frac{2}{5}}
\begin{bmatrix}
1\\
2
\end{bmatrix}
_{\frac{1}{5},-\frac{2}{5}}
\begin{bmatrix}
1\\
2
\end{bmatrix}
_{\frac{1}{5},\frac{2}{5}}
\\
&
\qquad
\times
\begin{bmatrix}
2\\
4
\end{bmatrix}
_{\frac{2}{5},-\frac{6}{5}}
\begin{bmatrix}
2\\
4
\end{bmatrix}
_{\frac{2}{5},-\frac{4}{5}}
\begin{bmatrix}
2\\
4
\end{bmatrix}
_{\frac{2}{5},\frac{4}{5}}
\begin{bmatrix}
2\\
4
\end{bmatrix}
_{\frac{2}{5},\frac{6}{5}}
\\
&
\qquad
\times
\begin{bmatrix}
1\\
3
\end{bmatrix}
_{\frac{1}{5},-\frac{2}{5}}
\begin{bmatrix}
1\\
3
\end{bmatrix}
_{\frac{1}{5},\frac{2}{5}}
\begin{bmatrix}
1\\
4
\end{bmatrix}
_{\frac{1}{5},0}
\begin{bmatrix}
1\\
5
\end{bmatrix}
_{1,0}
\begin{bmatrix}
0\\
1
\end{bmatrix}
_{\frac{1}{5},0}.
\end{split}
\end{align*}
Thus, it is positive up to $G^{\leq 6}$.
\par
(b).
Let  $(\d_1,\d_2)=(2,4)$.
We have the following relation modulo $G^{>4}$:
\begin{align*}
\begin{split}
\begin{bmatrix}
0\\
1
\end{bmatrix}
_{\frac{1}{4},0}
\begin{bmatrix}
1\\
0
\end{bmatrix}
_{\frac{1}{2},0}
&
\equiv
\begin{bmatrix}
1\\
0
\end{bmatrix}
_{\frac{1}{2},0}
\begin{bmatrix}
2\\
1
\end{bmatrix}
_{\frac{1}{4},0}
\begin{bmatrix}
1\\
1
\end{bmatrix}
_{\frac{1}{4},-\frac{1}{2}}
\begin{bmatrix}
1\\
1
\end{bmatrix}
_{\frac{1}{4},\frac{1}{2}}
\begin{bmatrix}
2\\
2
\end{bmatrix}
_{\frac{1}{2},-\frac{3}{2}}
\begin{bmatrix}
2\\
2
\end{bmatrix}
_{\frac{1}{2},-1}
\\
&\qquad  \times
\begin{bmatrix}
2\\
2
\end{bmatrix}
_{\frac{1}{2},-\frac{1}{2}}
\begin{bmatrix}
2\\
2
\end{bmatrix}
_{\frac{1}{2},\frac{1}{2}}
\begin{bmatrix}
2\\
2
\end{bmatrix}
_{\frac{1}{2},1}
\begin{bmatrix}
2\\
2
\end{bmatrix}
_{\frac{1}{2},\frac{3}{2}}
\begin{bmatrix}
1\\
2
\end{bmatrix}
_{\frac{1}{2},-1}
\begin{bmatrix}
1\\
2
\end{bmatrix}
_{\frac{1}{2},-\frac{1}{2}}
\\
&\qquad  \times
\begin{bmatrix}
1\\
2
\end{bmatrix}
_{\frac{1}{2},0}
\begin{bmatrix}
1\\
2
\end{bmatrix}
_{\frac{1}{2},0}
\begin{bmatrix}
1\\
2
\end{bmatrix}
_{\frac{1}{2},\frac{1}{2}}
\begin{bmatrix}
1\\
2
\end{bmatrix}
_{\frac{1}{2},1}
\begin{bmatrix}
1\\
3
\end{bmatrix}
_{\frac{1}{4},-\frac{1}{2}}
\begin{bmatrix}
1\\
3
\end{bmatrix}
_{\frac{1}{4},\frac{1}{2}}
\begin{bmatrix}
0\\
1
\end{bmatrix}
_{\frac{1}{4},0}.
\end{split}
\end{align*}
Thus, it is positive up to $G^{\leq 4}$.
\end{ex}

We can do the same computation 
up to $G^{\leq 4}$ in full generality
and confirm the positivity as below.

\begin{ex}
\label{ex:upto4}
Suppose that $\d_2=k \d_1$ for some positive integer $k$.
Applying the pentagon relation and the fission/fusion formula
as before,
we  obtain the following explicit formulas of the factors in the ordered product
for the LHS
of \eqref{eq:ee1} up to degree 4.

\par
(a).
{\em Degree 2}.
We have
$\d((1,1))=k\d_1$, and
the formula in \eqref{eq:ee1} is specialized as
\begin{gather}
\label{eq:ee2}
\prod_{j=1}^{k\d_1}
\prod_{i=1}^{\d_1}
\begin{bmatrix}
1\\
1
\end{bmatrix}
_{1,\frac{2k\d_1 + 1 + k - 2 j - 2k i}{k\d_1}}
=
\prod_{i=1}^{\d_1}
\begin{bmatrix}
1\\
1
\end{bmatrix}
_{\frac{1}{k\d_1},\frac{\d_1+1-2i}{\d_1}}.
\end{gather}

\par
(b).
{\em Degree 3}.
We have
\begin{align}
\label{eq:deg32}
\d((2,1))=k\d_1,
\quad
\d((1,2))
=
\begin{cases}
k\d_1/2
& (k: \text{even}),
\\
k\d_1
&
(k: \text{odd}).
\end{cases}
\end{align}
Then, we have the terms for $n=(2,1)$ and $(1,2)$,
\begin{gather}
\label{eq:deg30}
\prod_{j=1}^{k\d_1}
\prod_{i=1}^{\d_1-1}
\prod_{t=1}^i 
\begin{bmatrix}
2\\
1
\end{bmatrix}
_{1,\frac{-k\d_1 + 1  - 2 j + 2k i+2kt}{k\d_1}}
=
\prod_{i=1}^{\d_1-1}
\prod_{t=1}^{i}
\begin{bmatrix}
2\\
1
\end{bmatrix}
_{\frac{1}{k\d_1},\frac{-2\d_1+2i+2t}{\d_1}},
\\
\label{eq:deg31}
\begin{split}
\prod_{j=1}^{k\d_1-1}
\prod_{i=1}^{\d_1}
\prod_{t=1}^{k\d_1-j}
\begin{bmatrix}
1\\
2
\end{bmatrix}
_{1,\frac{k\d_1 + k  - 2 j - 2k i+2t}{k\d_1}}
\hskip100pt
\\
=
\begin{cases}
\displaystyle
\prod_{i=1}^{\d_1}
\prod_{t=1}^{k\d_1-1}
\begin{bmatrix}
1\\
2
\end{bmatrix}
_{\frac{2}{k\d_1},\frac{k-2ki+2t}{k\d_1}}
&
(k\d_1: \text{even}),
\\
\displaystyle
\prod_{i=1}^{\d_1}
\prod_{t=1}^{(k\d_1-1)/2}
\begin{bmatrix}
1\\
2
\end{bmatrix}
_{\frac{1}{k\d_1},\frac{k-1-2ki+4t}{k\d_1}}
&
(k\d_1: \text{odd}).
\end{cases}
\end{split}
\end{gather}
The equality \eqref{eq:deg30} is straightforward,
while the one \eqref{eq:deg31}
follows from the following
resummation
formula for any positive integer $p$:
\begin{align}
\begin{split}
\sum_{j=1}^{p-1}
\sum_{t=1}^{p-j}
x^{-2j+2t}
&=
\frac{x^{-2p+4}(1-x^{2p-2})(1-x^{2p})}
{(1-x^2)(1-x^4)}
\\
&
=\begin{cases}
\displaystyle
\sum_{j=1}^{p/2}
\sum_{t=1}^{p-1}
x^{2-4j+2t}
&
(p: \text{even}),
\\
\displaystyle
\sum_{j=1}^{p}
\sum_{t=1}^{(p-1)/2}
x^{-2j+4t}
&
(p: \text{odd}).
\end{cases}
\end{split}
\end{align}
To see the positivity,
we  need to resolve the discrepancy between 
\eqref{eq:deg32} and \eqref{eq:deg31} 
for $n=(1,2)$ when $\d_1$ is even and $k$ is odd,
e.g., $(\d_1,\d_2)=(2,6)$.
In fact, in this case we can further rewrite
the RHS of  \eqref{eq:deg31} (for even $k\d_1$) as
\begin{align}
\label{eq:deg33}
\begin{split}
&
\prod_{i=1}^{\d_1/2}
\Biggl\{
\Biggl(
\prod_{t=1}^{(k+1)/2}
\begin{bmatrix}
1\\
2
\end{bmatrix}
_{\frac{1}{k\d_1},\frac{k-1-4ki+4t}{k\d_1}}
\begin{bmatrix}
1\\
2
\end{bmatrix}
_{\frac{1}{k\d_1},\frac{k+2k\d_1-5-4ki+4t}{k\d_1}}
\Biggr)
\prod_{t=1}^{k\d_1-k-2}
\begin{bmatrix}
1\\
2
\end{bmatrix}
_{\frac{1}{k\d_1},\frac{3k+1-4ki+2t}{k\d_1}}
\Biggr\}.
\end{split}
\end{align}

\par
(c).
{\em Degree 4}.
The results are similar to the degree 3 case.
However, the formulas become lengthy, so we put them in Appendix \ref{sec:positivity}.
They also demonstrate the complexity of proving the positivity in higher degrees by direct computation.
\end{ex}

We summarize the above results in Example \ref{ex:upto4}
as follows.
\begin{prop}
The converse of Theorem \ref{thm:nonpos1}
holds at least up to 
$G^{\leq 4}$.
\end{prop}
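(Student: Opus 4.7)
The plan is to read off the proof directly from the explicit decompositions assembled in Example \ref{ex:upto4} and its degree-$4$ extension in Appendix \ref{sec:positivity}. Assuming without loss of generality (by the rank-$2$ conventions of Section \ref{subsec:rank2}) that $\d_2 = k\d_1$ for some $k\in\bbZ_{>0}$, I would show that for every $n\in N^+$ with $\deg(n)\leq 4$ the contribution of $n$ to the ordered product in \eqref{eq:ee1}, after the requisite applications of the pentagon relation and fission/fusion, consists only of factors of the form $\Psi_{1/\d(n),b}[n]^{c}$ with $c\in\bbZ_{>0}$. By Proposition \ref{prop:psi1} together with the inductive construction of a CSD in \cite{Gross14}, this is sufficient to conclude positivity modulo $G^{>4}$.

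At degree $2$, the identity \eqref{eq:ee2} presents the $(1,1)$-contribution as $\d_1$ copies of $\Psi_{1/k\d_1,b}[(1,1)]$; since $\d((1,1))=k\d_1$, this is of the required positive form. At degree $3$, the $(2,1)$-contribution \eqref{eq:deg30} consists of $\binom{\d_1}{2}$ copies of $\Psi_{1/k\d_1,b}[(2,1)]$, again positive because $\d((2,1))=k\d_1$. For the $(1,2)$-contribution \eqref{eq:deg31}, when $k\d_1$ is odd one has $\d((1,2))=k\d_1$ and each factor already carries interval $1/k\d_1$; when $k$ is even, $\d((1,2))=k\d_1/2$ matches the interval $2/k\d_1$ in the other branch of \eqref{eq:deg31}. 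The only subtle case is $\d_1$ even and $k$ odd, where $\d((1,2))=k\d_1$ but the natural expression gives interval $2/k\d_1$; here the auxiliary regrouping \eqref{eq:deg33} splits each pair of adjacent shifts into two single factors of interval $1/k\d_1$, restoring positivity.

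For degree $4$, I would follow the same scheme: each vector $n$ with $\deg n = 4$ (namely $(3,1)$, $(2,2)$, $(1,3)$) receives a contribution whose shifts must be matched against the arithmetic progression produced by \eqref{eq:fis1} and \eqref{eq:fus1} for the desired $\Psi_{1/\d(n),b}[n]^c$-decomposition. The degree-$4$ formulas of Appendix \ref{sec:positivity} provide these contributions explicitly, and the parity subcases in $(\d_1,k)$ are resolved by resummation identities of the same shape as the one used to derive \eqref{eq:deg31}, simply longer. In every subcase the exponents $c$ come out as positive integers.

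The main difficulty is bookkeeping rather than conceptual: one must repeatedly split $(\d_1,k)$ into parity subcases and, in each subcase, find an explicit regrouping of the shifts $b$ into arithmetic progressions of step $2/\d(n)$ centered appropriately so that fusion applies. Once those regroupings are written out — the identity behind \eqref{eq:deg33} is the prototype — positivity in the sense of Definition \ref{defn:positive1} becomes visible by inspection of the resulting factors, and the proposition follows.
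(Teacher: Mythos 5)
Your proposal takes the paper's own route: the proposition is proved by the explicit case-by-case verification in Example \ref{ex:upto4} and Appendix \ref{sec:positivity}, and your reading of \eqref{eq:ee2}, \eqref{eq:deg30}--\eqref{eq:deg33} for degrees $2$ and $3$ is accurate, including the identification of the problematic parity subcase $\d_1$ even, $k$ odd for $n=(1,2)$ and its resolution via \eqref{eq:deg33}.

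However, the plan as stated would stall at degree $4$. Your claim that the parity subcases there are ``resolved by resummation identities of the same shape as the one used to derive \eqref{eq:deg31}, simply longer'' underestimates the difficulty. For the $(1,3)$ contribution when $3\mid\d_1$ but $3\nmid k$, the quadruple-sum resummation in Appendix \ref{sec:positivity} aligns the shifts to step $3/(k\d_1)$, whereas $\d((1,3))=k\d_1$ in this case, so the target interval is $1/(k\d_1)$. Closing this factor-of-three gap is not a rearrangement of terms: one must prove that $B_k(x)=A_k(x)/(1+x^2+x^4)$ from \eqref{eq:Bp1} has nonnegative integer coefficients for $k\not\equiv 0\pmod 3$, a genuinely new positivity assertion about a Laurent polynomial quotient. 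The paper could only settle it by exhibiting explicit, computer-discovered positive expressions for $B_k$ separately in each residue class $k\equiv 1,2,4,5\pmod 6$. Since you defer to Appendix \ref{sec:positivity}, the argument is not wrong as a pointer to the paper's computations, but your closing claim that positivity then ``becomes visible by inspection of the resulting factors'' does not hold at degree $4$ — the pattern from degree $3$ does not continue by bookkeeping alone, and this is exactly why the paper stops at $G^{\leq 4}$ and phrases the general statement only as a conjecture.
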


Encouraged by this partial but already nontrivial  result,
we give a conjecture in the spirit of \cite[Conj.~14]{Lee14}.

\begin{conj}[{Cf.~\cite[Conj.~14]{Lee14}}]
 The converse of  
Theorem \ref{thm:nonpos1} holds.
\end{conj}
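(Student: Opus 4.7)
The plan is to extend the direct pentagon-based computation illustrated in Example \ref{ex:upto4} to all degrees by an inductive argument. Without loss of generality assume $\d_2 = k\d_1$ with $k\in\bbZ_{>0}$. First I would apply the fission formula \eqref{eq:fis1} to both initial factors to rewrite the LHS of \eqref{eq:ee1} as a product of $\d_1+\d_2$ factors of the form $\Psi_{1,b}[e_i]$ with the symmetric shifts $b=(2t-\d_i-1)/\d_i$. Then I would repeatedly apply the pentagon relation \eqref{eq:pent1} (always with $c=1$, which is permitted since $\{e_2,e_1\}=1$) to reorder this product, and finally attempt to fuse the resulting $\Psi_{1,b}[n]$ factors, for each primitive $n$, back into a single element $\Psi_{1/\d(n),b'}[n]^{c}$ with $c\in\bbZ_{>0}$ via \eqref{eq:fus1}.

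The key combinatorial claim to isolate and prove is the following: for every primitive vector $n = p e_1 + q e_2 \in N^+_{\rmpr}$, the multiset of shifts $b$ carried by the factors $\Psi_{1,b}[n]$ produced by the pentagon recursion decomposes, under the divisibility hypothesis $\d_1\mid \d_2$, into a disjoint union of complete arithmetic progressions of common step $2/\d(n)$ and of length a positive integer multiple of $\d_0/\d(n)$, and all with coefficient $+1$. Granted this claim, the fusion \eqref{eq:fus1} converts each progression into a positive integer power of $\Psi_{1/\d(n),b'}[n]$, yielding a positive realization in the sense of Definition \ref{defn:positive1}. The claim would be established by induction on $\deg n$: in the base case $\deg n = 1$ it is immediate from the fission of the initial walls, and the inductive step exploits the fact that a pentagon move $\Psi_{1,b_2}[n_2]\Psi_{1,b_1}[n_1]\mapsto \Psi_{1,b_1}[n_1]\Psi_{1,b_1+b_2}[n_1+n_2]\Psi_{1,b_2}[n_2]$ produces new shifts additively, so that the arithmetic-progression structure inherited from the initial fission is preserved exactly when $\d_1\mid \d_2$ (this is where the divisibility hypothesis enters — the initial progressions for $e_1$ and $e_2$ are then commensurable, with step ratios that align).

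The main obstacle is twofold. First, the ordering of pentagon applications is non-canonical, so one must show that the \emph{collected} shift data for each fixed $n$ is a diagram-equivalence invariant and matches the target arithmetic structure independently of the chosen order. Here Proposition \ref{prop:psi1} and the uniqueness in Theorem \ref{thm:CSD1} give a canonical target to compare against, reducing the problem to a local analysis at each perpendicular joint, in the spirit of the appendix cited after \eqref{eq:a115}. Second, one must rule out the appearance of negative powers globally; the cancellation mechanism observed in the classical limit (as in Example~(a) of Section \ref{subsec:rank21}, where $\Psi_{1/6,\pm 1/3}[1,1]$ and $\Psi_{1/6,0}[1,1]^{-1}$ fuse into $\Psi[(1,1)]^6$) must be shown to be unnecessary under the divisibility hypothesis because the relevant progressions are already complete without needing a negative correction.

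A conceivable alternative route is to take the known positive classical CSD in rank 2 and explicitly quantize its walls $\Psi[n]^{c_n}$ as $\prod_{s} \Psi_{1/\d(n),b_s}[n]$, with the shifts $b_s$ forming an arithmetic progression of length $c_n \d_0/\d(n)$; one then needs only verify consistency at each perpendicular joint via Theorem \ref{thm:pent1} and \eqref{eq:fis1}--\eqref{eq:fus1}. This route recasts the conjecture as a purely combinatorial verification of a shift-assignment rule, but still relies on exactly the same divisibility-driven arithmetic as the inductive approach above.
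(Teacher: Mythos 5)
The statement you are trying to prove is stated as a \textbf{conjecture} in the paper: the paper does not prove it, and only verifies it through degree $4$ by direct computation (Example \ref{ex:upto4} and Appendix \ref{sec:positivity}, with computer assistance via SageMath at degree $4$). Your proposal correctly identifies the natural strategy (fission, pentagon reordering, fusion) and correctly isolates the crucial combinatorial claim that, for each primitive $n$, the collected shifts decompose into complete arithmetic progressions of step $2/\d(n)$ with positive multiplicities; but it does not prove that claim, and asserting it ``would be established by induction on $\deg n$'' leaves the conjecture exactly where the paper left it.

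Two concrete flaws show the sketch does not close the gap. First, your claim that one can ``always'' apply the pentagon relation with $c=1$ is incorrect: already in the affine computation \eqref{eq:com2}--\eqref{eq:com3} the reordering requires $c=2$ because $\{(1,2),(1,0)\}=2$, and in general the pentagon moves generate adjacent pairs with arbitrarily large $c$ as the degree grows, so the interval data ramifies rather than simply inheriting the commensurability of the initial fission. Second, the arithmetic-progression structure does \emph{not} emerge automatically: the paper's own degree-$3$ analysis exhibits a discrepancy between the interval produced by naive resummation in \eqref{eq:deg31} and the required $1/\d((1,2))$ from \eqref{eq:deg32} when $\d_1$ is even and $k$ is odd, resolved only by the ad hoc rearrangement \eqref{eq:deg33}; the analogous degree-$4$ resolution for $n=(1,3)$ required proving, case by case and with software assistance, that the generating polynomials $B_k(x)$ of Appendix \ref{sec:positivity} have nonnegative coefficients. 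Your proposal does not address how these delicate reorganizations would be controlled uniformly across all $n$ and all degrees, nor why the cancellation mechanism (which you acknowledge appears in the nondivisible case) is provably never needed under the divisibility hypothesis. That uniform control is precisely the substance of the open problem, and merely invoking Proposition \ref{prop:psi1} and the uniqueness of Theorem \ref{thm:CSD1} pins down a canonical target with $c\in\bbZ$ but says nothing about the sign of $c$.
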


\appendix

\section{Derivation of relations \eqref{eq:a115} and \eqref{eq:a227} by pentagon relation
}

\label{sec:derivation}
We faithfully follow the derivation of \cite{Matsushita21}
in the classical case.
We ask  the reader to consult \cite{Matsushita21}
for further details.
It is important that in the derivation we only use the pentagon and commutative relations in Theorem \ref{thm:pent1}.

\subsection{Derivation of relation \eqref{eq:a115}}

Let us write $\Psi_{a,b}[n]$ as $[n]_{a,b}$ for simplicity.

\begin{lem}[{cf.~\cite[Lemma 1]{Matsushita21}}]
\label{lem:a111}
If $\{n',n\}=c>0$, the following formulas hold:
\begin{align}
\label{eq:nn1}
[n']_{c/2,b'}[n]_{c,b}
&=[n]_{c,b}
[n+n']_{c/2,b+b'}
[n+2n']_{c,b+2b'}
[n']_{c/2,b'},
\\
[n']_{c,b'}[n]_{c/2,b}
&=[n]_{c/2,b}
[2n+n']_{c,2b+b'}
[n+n']_{c/2,b+b'}
[n']_{c,b'}.
\end{align}
\end{lem}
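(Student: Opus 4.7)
The two formulas should follow directly from the tools already in hand, namely the fission identity \eqref{eq:fis1}, the fusion identity \eqref{eq:fus1}, and the pentagon relation \eqref{eq:pent1} itself (together with the trivial commutation \eqref{eq:com1}). The idea is that an element with interval $c/2$ can be split into two standard pentagon-compatible factors of interval $c$, and once everything has the same interval $c$, the pentagon relation \eqref{eq:pent1} can be applied freely.

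My plan for the first formula is: start from $[n']_{c/2,b'}[n]_{c,b}$ and apply fission to rewrite
\[
[n']_{c/2,b'}=[n']_{c,b'-c/2}[n']_{c,b'+c/2}.
\]
Then apply the pentagon relation \eqref{eq:pent1} twice, first to the rightmost pair $[n']_{c,b'+c/2}[n]_{c,b}$ and then to the pair $[n']_{c,b'-c/2}[n]_{c,b}$, which pushes both copies of $[n']_{c,\cdot}$ to the right and produces two $[n+n']_{c,\cdot}$ factors in between. At this point one must swap one copy of $[n']_{c,b'-c/2}$ past one copy of $[n+n']_{c,b+b'+c/2}$; since $\{n',n+n'\}=\{n',n\}=c$ the pentagon relation applies again, creating the expected $[n+2n']_{c,b+2b'}$ factor. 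Finally, fusion \eqref{eq:fus1} recombines the two adjacent $[n+n']_{c,\cdot}$ factors into $[n+n']_{c/2,b+b'}$ and the two adjacent $[n']_{c,\cdot}$ factors into $[n']_{c/2,b'}$, yielding the stated identity.

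The second formula is entirely analogous, now applying fission on the right to write $[n]_{c/2,b}=[n]_{c,b-c/2}[n]_{c,b+c/2}$. Two pentagon moves push the two copies of $[n]_{c,\cdot}$ to the left, one additional pentagon move swapping $[n+n']_{c,b+b'-c/2}$ past $[n]_{c,b+c/2}$ produces the $[2n+n']_{c,2b+b'}$ factor (here using $\{n+n',n\}=c$), and fusion then recombines the two $[n]_{c,\cdot}$ and the two $[n+n']_{c,\cdot}$ factors into elements of interval $c/2$.

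There is no real obstacle; the only delicate point is bookkeeping of the shifts $b\mapsto b\pm c/2$ arising from fission and the compounded shifts produced when the pentagon relation is applied to a sum vector. In particular one must verify that after every pentagon move, the pair of adjacent shifts on each common root vector differ by exactly $c$, so that fusion with $p=2$ can be applied to collapse them back to interval $c/2$. Once the shifts are tracked carefully, both identities drop out after five elementary moves each.
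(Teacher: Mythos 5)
Your proposal is correct and is essentially the paper's own proof: the paper's proof simply states that the lemma is proved "by the fission/fusion and the pentagon relation in the same way as the relation (2.32)" (the $B_2$ computation), and your step-by-step account—fission, two pentagon moves to push the split factor through, a third pentagon move on the interior pair using $\{n',n+n'\}=c$ (resp.\ $\{n+n',n\}=c$), then fusion—is precisely that argument written out.
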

\begin{proof}
This is proved by the fission/fusion and the pentagon relation
in the same way as the relation \eqref{eq:pent3}.
\end{proof}

\begin{lem}[{cf.~\cite[Lemma 2]{Matsushita21}}]
\label{lem:a112}
Let $\ell$ be any nonnegative integer.
If $\{n',n\}=c>0$, the following formula holds:
\begin{align}
\begin{split}
&\
[n']_{c/2,b'}
\biggl(\,
\buildrel \longrightarrow \over {\prod_{0\leq p \leq \ell}}
[n+2pn']_{c,b+2pb'\pm pc}
\biggr)
\\
=&\
[n]_{c,b}
\biggl(\,
\buildrel \longrightarrow \over {\prod_{1\leq p \leq 2\ell+1}}
[n+pn']_{c/2,b+pb'\pm (p-1)c/2}
\biggr)
[n+(2\ell+2)n']_{c,b+(2\ell+2)b'\pm\ell c}[n']_{c/2,b'},
\end{split}
\\
\begin{split}
&\
\biggl(\,
\buildrel \longleftarrow \over {\prod_{0\leq p \leq \ell}}
[2pn+n']_{c,2pb+b'\pm pc}
\biggr)
[n]_{c/2,b}
\\
=&\
[n]_{c/2,b}
[(2\ell+2)n+n']_{c,(2\ell+2)b+b'\pm\ell c}
\biggl(\,
\buildrel \longleftarrow \over {\prod_{1\leq p \leq 2\ell+1}}
[pn+n']_{c/2,pb+b'\pm (p-1)c/2}
\biggr)
[n']_{c,b'}.
\end{split}
\end{align}
\end{lem}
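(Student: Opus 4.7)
My plan is to prove both identities simultaneously by induction on $\ell\geq 0$, with the base case $\ell=0$ being Lemma \ref{lem:a111} itself (when $\ell=0$ the extra shifts $\pm(p-1)c/2$ and $\pm\ell c$ vanish for the relevant indices, and the stated products reduce exactly to the two formulas of Lemma \ref{lem:a111}). A choice of sign $\pm$ is fixed throughout; the opposite sign is handled identically.

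For the first identity, the LHS for $\ell+1$ equals the LHS for $\ell$ right-multiplied by the single new factor $F:=[n+(2\ell+2)n']_{c,b+(2\ell+2)b'\pm(\ell+1)c}$. Applying the induction hypothesis brings $[n']_{c/2,b'}$ to the right end, so that it now sits immediately to the left of $F$. Since $\{n',\,n+(2\ell+2)n'\}=\{n',n\}=c$, formula 1 of Lemma \ref{lem:a111} applies to the pair $[n']_{c/2,b'}\,F$ and replaces it by a product of four factors whose leftmost factor is a copy of $F$ and whose rightmost factor is $[n']_{c/2,b'}$. The copy of $F$ then sits adjacent to the last factor produced by the induction hypothesis, which is $[n+(2\ell+2)n']_{c,b+(2\ell+2)b'\pm\ell c}$; this carries the same normal vector and a shift differing by exactly $c$. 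The $p=2$ case of the fusion formula \eqref{eq:fus1} combines these two $c$-factors into the single $c/2$-factor $[n+(2\ell+2)n']_{c/2,b+(2\ell+2)b'\pm(2\ell+1)c/2}$, which is exactly the $p=2\ell+2$ term demanded by the statement for $\ell+1$. The remaining two factors produced by Lemma \ref{lem:a111} supply the $p=2\ell+3$ term and the new boundary term $[n+(2\ell+4)n']_{c,b+(2\ell+4)b'\pm(\ell+1)c}$, completing the inductive step.

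The second identity is proved by the mirror version of the same argument: the LHS for $\ell+1$ is obtained from that for $\ell$ by left-multiplying by the new factor $F':=[(2\ell+2)n+n']_{c,(2\ell+2)b+b'\pm(\ell+1)c}$; the induction hypothesis brings $[n]_{c/2,b}$ to the left end so that it sits immediately to the right of $F'$; formula 2 of Lemma \ref{lem:a111} (applicable since $\{(2\ell+2)n+n',\,n\}=c$) moves $[n]_{c/2,b}$ past $F'$; and the resulting adjacent pair of $c$-factors with common normal vector $(2\ell+2)n+n'$ fuses via \eqref{eq:fus1}, possibly after a harmless commutation (they are parallel), to produce the required $p=2\ell+2$ term. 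The remaining two factors from Lemma \ref{lem:a111} yield the new $p=2\ell+3$ term and the new boundary term.

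The main obstacle, though a routine one, is bookkeeping of the shift parameters: after each invocation of Lemma \ref{lem:a111} and each fusion, one must check that the produced shifts match exactly the prescribed patterns $b+pb'\pm(p-1)c/2$ or $pb+b'\pm(p-1)c/2$. The fusion step is the most delicate, but it works because the two $c$-shifts being fused are symmetric about the target $c/2$-shift and differ by exactly $\pm c$. The sign convention $\pm$ must be tracked consistently through both Lemma \ref{lem:a111} and the fusion formula, and the claim that the formulas hold for \emph{either} sign is essential so that one can apply Lemma \ref{lem:a111} uniformly without losing the inductive invariant.
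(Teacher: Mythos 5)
Your proof is correct and takes essentially the same approach as the paper: induction on $\ell$ based at Lemma~\ref{lem:a111}, applying Lemma~\ref{lem:a111} in the inductive step to pass $[n']_{c/2,b'}$ (respectively $[n]_{c/2,b}$) past the newly appended factor, and then fusing the resulting adjacent pair of $c$-interval factors on the common normal vector $n+(2\ell+2)n'$ (respectively $(2\ell+2)n+n'$) into the required $c/2$-interval factor via \eqref{eq:fus1}. The parameter bookkeeping you describe checks out.
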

\begin{proof}
For $\ell=0$, they coincide with the relations in Lemma \ref{lem:a111}.
Then, they are proved by the induction on $\ell$
with the pentagon relation and Lemma \ref{lem:a111}
in the same way as \cite[Lemma 2]{Matsushita21}.
\end{proof}

\begin{lem}[{cf.~\cite[Lemma 3]{Matsushita21}}]
\label{lem:a113}
Let $\ell$ be any nonnegative integer.
If $\{n',n\}=c>0$, the following formulas hold:
\begin{align}
\begin{split}
[n']_{c/2,b'}
\biggl(\,
\buildrel \longrightarrow \over {\prod_{p\geq 0}}
[n+2pn']_{c,b+2pb'\pm pc}
\biggr)
&=
[n]_{c,b}
\biggl(\,
\buildrel \longrightarrow \over {\prod_{p\geq 1}}
[n+pn']_{c/2,b+pb'\pm (p-1)c/2}
\biggr)
[n']_{c/2,b'},
\end{split}
\\
\begin{split}
\biggl(\,
\buildrel \longleftarrow \over {\prod_{p\geq 0}}
[2pn+n']_{c,2pb+b'\pm pc}
\biggr)
[n]_{c/2,b}
&=
[n]_{c/2,b}
\biggl(\,
\buildrel \longleftarrow \over {\prod_{p\geq 1}}
[pn+n']_{c/2,pb+b'\pm (p-1)c/2}
\biggr)
[n']_{c,b'}.
\end{split}
\end{align}
\end{lem}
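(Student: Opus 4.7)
The plan is to obtain Lemma \ref{lem:a113} as the $\ell\to\infty$ limit of Lemma \ref{lem:a112}, with the limit taken in the filtration topology of the structure group $G=\exp(\widehat{\frakg})$. Since $G$ is separated and complete in this topology, it suffices to show that for every fixed $N\in\bbZ_{\geq 0}$, the identities of Lemma \ref{lem:a112} reduce modulo $G^{>N}$, for all sufficiently large $\ell$, to the corresponding identities of Lemma \ref{lem:a113}.

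First I would verify that the infinite products appearing in Lemma \ref{lem:a113} make sense in $G$. By \eqref{eq:diloge1}, each factor $[m]_{a,b}=\Psi_{a,b}[m]$ is the exponential of a series supported in degrees $\geq\deg m$, hence $[m]_{a,b}\equiv\mathrm{id}\pmod{G^{>N}}$ whenever $\deg m>N$. Since $\deg(n+2pn')\to\infty$ and $\deg(n+pn')\to\infty$ as $p\to\infty$, each of
\[
\prod_{p\geq 0}[n+2pn']_{c,b+2pb'\pm pc},\qquad \prod_{p\geq 1}[n+pn']_{c/2,b+pb'\pm(p-1)c/2}
\]
has only finitely many factors nontrivial modulo $G^{>N}$, so each defines a well-defined element of $G$ by the finiteness condition.

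Next, for each fixed $N$ I would choose $\ell$ large enough that (i) every factor of the infinite products indexed by $p>\ell$ (resp.\ $p>2\ell+1$) already lies in $G^{>N}$, and (ii) the residual factor $[n+(2\ell+2)n']_{c,b+(2\ell+2)b'\pm\ell c}$ on the right-hand side of Lemma \ref{lem:a112} lies in $G^{>N}$; the latter holds as soon as $\deg n+(2\ell+2)\deg n'>N$. For such $\ell$, the first identity of Lemma \ref{lem:a112}, read modulo $G^{>N}$, is precisely the first identity of Lemma \ref{lem:a113}; letting $N\to\infty$ then yields equality in $G$ itself. The second identity follows by the mirror-image argument, multiplying on the right in place of the left.

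I expect no essential obstacle. The only minor subtlety is verifying that the shifts $b+(2\ell+2)b'\pm\ell c$ inside the residual factor do not interfere with convergence, but the filtration is controlled only by the degree of the underlying element $n+(2\ell+2)n'\in N^+$, so the quantum data $(a,b)$ play the role of passive labels. The whole procedure is a natural transcription to the quantum setting of the classical limit argument of \cite{Matsushita21}, relying only on the pentagon and commutation relations of Theorem \ref{thm:pent1}.
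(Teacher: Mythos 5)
Your proposal is exactly the paper's argument: Lemma \ref{lem:a113} is obtained from Lemma \ref{lem:a112} by letting $\ell\to\infty$ in the filtration topology of $G$. You have simply spelled out the convergence details (the infinite products are well defined since $\deg$ of the factors tends to infinity, and the residual factor in Lemma \ref{lem:a112} eventually lies in $G^{>N}$), which is correct and matches the intent of the paper's one-line proof.
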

\begin{proof}
This is obtained by taking the limit $\ell\rightarrow \infty$
of the relations in Lemma \ref{lem:a112}.
\end{proof}

\begin{thm}[{cf.~\cite[Theorem 2]{Matsushita21}}]
\label{thm:a111}
If $\{n',n\}=c>0$, the following formula holds:
\begin{align}
\label{eq:nn3}
\begin{split}
&\ [n']_{c/2,b'}[n]_{c/2,b}
\\
=&\
\biggl(\,
\buildrel \longrightarrow \over {\prod_{p\geq 0}}
[(p+1)n+pn')]_{c/2,(p+1)b+pb'}
\biggr)
\\
&\quad\times
\biggl(\,
\prod_{p\geq 0}
[2^p(n+n')]_{2^{p-1}c,2^p(b+b')-2^{p-1}c}
[2^p(n+n')]_{2^{p-1}c,2^p(b+b')+2^{p-1}c}
\biggr)
\\
&\quad\times
\biggl(\,
\buildrel \longleftarrow \over {\prod_{p\geq 0}}
[pn+(p+1)n')]_{c/2,pb+(p+1)b'}
\biggr).
\end{split}
\end{align}
In particular, if we set $n=\bfe_1$, $n'=\bfe_2$, $c=1$, and $b=b'=0$, we obtain
the relation \eqref{eq:a115}.
\end{thm}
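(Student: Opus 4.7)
The plan is to mirror the classical derivation of \cite[Theorem 2]{Matsushita21}, supplementing it with the quantum shift-parameter bookkeeping encoded in Lemmas \ref{lem:a111}--\ref{lem:a113}. As in the classical case, the only tools needed are the pentagon relation \eqref{eq:pent1}, the commutativity \eqref{eq:com1}, and the fission/fusion formulas \eqref{eq:fis1}--\eqref{eq:fus1}, which together feed into the iterated identities of Lemma \ref{lem:a112} and their $\ell\to\infty$ limits in Lemma \ref{lem:a113}.

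First I would establish a finite-$N$ approximation of \eqref{eq:nn3} modulo $G^{>N}$. Starting from $[n']_{c/2,b'}[n]_{c/2,b}$ and iterating Lemma \ref{lem:a113}, each application extracts one new peripheral factor $[(p+1)n+pn']_{c/2,\cdot}$ to the left and one $[pn+(p+1)n']_{c/2,\cdot}$ to the right, with shift parameters completely forced by the base-case identity of Lemma \ref{lem:a111}. What remains in the middle is a residual cluster, most of whose factors along the $n+n'$ direction commute by \eqref{eq:com1} (since $\{n+n',n+n'\}=0$) but which still contains non-commuting cross-pairs whose bracket $\{\cdot,\cdot\}$ evaluates to a positive multiple of $c$.

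The doubling hierarchy $[2^p(n+n')]_{2^{p-1}c,\cdot}$ in \eqref{eq:nn3} arises by self-similarity: the remaining non-commuting cross-pair sits one level up (bracket $2c$ instead of $c$) and therefore admits its own pentagon decomposition via Lemma \ref{lem:a111} applied at this rescaled bracket value. This produces the next central pair $[2(n+n')]_{c,2(b+b')\pm c}$ together with new peripheral factors that can be absorbed into the outer products via \eqref{eq:com1} and fusion. Iterating this self-similar move exhausts every element of the form $(p+1)n+pn'$, $pn+(p+1)n'$, or $2^p(n+n')$ with degree $\leq N$. Sending $N\to\infty$, the approximations stabilize in the topology of the completion $\widehat\frakg$, yielding \eqref{eq:nn3}. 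The specialization $n=\bfe_1$, $n'=\bfe_2$, $c=1$, $b=b'=0$ is then immediate and recovers \eqref{eq:a115}.

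The main obstacle is organizational rather than conceptual: tracking the shift parameters $b$ and $b'$ through the cascade of pentagon moves so that, at each stage, the three products---left peripheral, central doubling, right peripheral---assemble with exactly the shifts $(p+1)b+pb'$, $2^p(b+b')\pm 2^{p-1}c$, and $pb+(p+1)b'$ demanded by \eqref{eq:nn3}. All conceptual content already appears in \cite{Matsushita21}; the quantum generalization reduces to routine but tedious shift-bookkeeping governed uniformly by Lemma \ref{lem:a111}, as in the passage from Lemma \ref{lem:a112} to Lemma \ref{lem:a113}.
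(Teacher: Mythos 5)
Your plan correctly identifies the tools (Lemmas \ref{lem:a111}--\ref{lem:a113}, fission/fusion, pentagon) and the self-similar structure at bracket $2c$ that generates the central doubling hierarchy, which is indeed the engine of the paper's argument. However, two things need to be fixed.

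First, the opening move is not an iteration of Lemma \ref{lem:a113}: that lemma takes as input a product of the specific shape $[n']_{c/2,b'}\prod_{p\ge 0}[n+2pn']_{c,\cdot}$ and cannot be applied directly to the two-factor word $[n']_{c/2,b'}[n]_{c/2,b}$. The paper first fissions $[n]_{c/2,b}=[n]_{c,b-c/2}[n]_{c,b+c/2}$ and applies \eqref{eq:nn1} from Lemma \ref{lem:a111} twice, producing an explicit eight-factor word whose only anti-ordered adjacency is $[n+2n']_{c,b+2b'-c/2}\,[n]_{c,b+c/2}$; Lemma \ref{lem:a113} is used only afterwards, to reabsorb the pieces coming from reordering this pair.

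Second, and more substantively, your appeal to "self-similarity" and the assertion that "the approximations stabilize in the completion" is exactly the point that needs an argument. The rigorous form is an induction on $k$, proving \eqref{eq:nn3} modulo $G^{>k\deg(n+n')-1}$. In the inductive step one applies the \emph{full inductive hypothesis} (not merely Lemma \ref{lem:a111}) to the anti-ordered $2c$-bracket pair, and the step goes through only because of the degree-doubling inequality
\begin{align*}
(k-1)\deg\bigl(n+(n+2n')\bigr)=2(k-1)\deg(n+n')\ \geq\ k\deg(n+n')\qquad (k\geq 2),
\end{align*}
which guarantees that the $(k-1)$-level hypothesis, stated for the rescaled pair, is already accurate to the degree needed at level $k$ for the original pair. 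This is the one genuinely quantitative point that makes the recursion converge, and your outline omits it.
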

\begin{proof}
It is enough to prove 
that, for a given positive integer $k$, 
the relation holds modulo $G^{>\ell}$ with $\ell=k\deg(n+n')-1$.
We prove it by the induction on $k$.
For $k=1$,
the RHS reduced to
\begin{align}
[n]_{c/2,b}
[n']_{c/2,b'}
\mod G^{>\ell}.
\end{align}
Then, the relation certainly holds  because
$[n]_{c/2,b}=[n]_{c,b-c/2}[n]_{c,b+c/2}$ and $[n']_{c/2,b'}=[n']_{c,b'-c/2}[n']_{c,b'+c/2}$ commute modulo  $G^{>\ell}$
by the pentagon relation.
Suppose that the claim holds for $k-1$ for some $k\geq 2$.
First, we consider the relation
\begin{align}
\label{eq:prod1}
\begin{split}
&\
[n']_{c/2,b'}[n]_{c/2,b}
\\
=&\
( [n']_{c/2,b'}[n]_{c,b-c/2} )[n]_{c,b+c/2}
\\
=&\
[n]_{c,b-c/2}
[n+n']_{c/2,b+b'-c/2}
[n+2n']_{c,b+2b'-c/2}
([n']_{c/2,b'}[n]_{c,b+c/2})
\\
=&\
[n]_{c,b-c/2}
[n+n']_{c/2,b+b'-c/2}
[n+2n']_{c,b+2b'-c/2}
\\
&
\quad
\times
[n]_{c,b+c/2}
[n+n']_{c/2,b+b'+c/2}
[n+2n']_{c,b+2b'+c/2}
[n']_{c/2,b'},
\end{split}
\end{align}
where we used \eqref{eq:nn1} twice for the parentheses.
This calculation is essentially the same as \eqref{eq:com2}.
We need to calculate the anti-ordered product
$[n+2n']_{c,b+2b'-c/2}
[n]_{c,b+c/2}
$
in the last expression.
Note that $\{n+2n',n\}=2c$.
Then, by the induction hypothesis, we have, modulo $G^{\ell'}$
with $\ell'=(k-1)\deg(n+(n+2n'))$,
\begin{align}
\label{eq:prod2}
\begin{split}
&\ [n+2n']_{c,b+2b'-c/2}[n]_{c,b+c/2}
\\
\equiv &\
\biggl(\,
\buildrel \longrightarrow \over {\prod_{p\geq 0}}
[(2p+1)n+2pn')]_{c,(2p+1)b+pb'+c/2}
\biggr)
\\
&\quad\times
\biggl(\,
\prod_{p\geq 0}
[2^{p+1}(n+n')]_{2^{p}c,2^{p+1}(b+b')-2^{p}c}
[2^{p+1}(n+n')]_{2^{p}c,2^{p+1}(b+b')+2^{p}c}
\biggr)
\\
&\quad\times
\biggl(\,
\buildrel \longleftarrow \over {\prod_{p\geq 0}}
[(2p+1)n+(2p+2)n']_{c,(2p+1)b+(2p+2)b'-c/2}
\biggr).
\end{split}
\end{align}
Meanwhile,
we have the inequality
\begin{align}
(k-1)\deg(n+(n+2n'))=2(k-1)\deg(n+n')\geq k \deg(n+n'),
\end{align}
where the second inequality holds due to the assumption $k\geq 2$.
Therefore, the relation holds also modulo $G^{>\ell}$ with
$\ell= k \deg(n+n')$.
We put \eqref{eq:prod2} into the last expression of \eqref{eq:prod1}
and apply Lemma \ref{lem:a113} to terms therein as
\begin{align}
\begin{split}
&\
[n+n']_{c/2,b+b'-c/2}
\biggl(\,
\buildrel \longrightarrow \over {\prod_{p\geq 0}}
[(2p+1)n+2pn']_{c,(2p+1)b+2pb'+c/2}
\biggr)
\\
=&\
[n]_{c,b+c/2}
\biggl(\,
\buildrel \longrightarrow \over {\prod_{p\geq 1}}
[(p+1)n+pn']_{c/2,(p+1)b+pb'}
\biggr)
[n+n']_{c/2,b+b'-c/2},
\end{split}
\end{align}
\begin{align}
\begin{split}
&\
\biggl(\,
\buildrel \longleftarrow \over {\prod_{p\geq 0}}
[(2p+1)n+(2p+2)n']_{c,(2p+1)b+(2p+2)b'-c/2}
\biggr)
[n+n']_{c/2,b+b'+c/2}
\\
=&\
[n+n']_{c/2,b+b'+c/2}
\biggl(\,
\buildrel \longleftarrow \over {\prod_{p\geq 1}}
[(p+1)n+(p+2)n']_{c/2,(p+1)b+(p+2)b'}
\biggr)
[n+2n']_{c,b+2b'-c/2}.
\end{split}
\end{align}
Then, after a little manipulation, we obtain the desired expression
in the RHS of \eqref{eq:nn1}.
\end{proof}

\subsection{Derivation of relation \eqref{eq:a227}}

\begin{lem}[{cf.~\cite[Lemma 4]{Matsushita21}}]
\label{lem:a222}
Let $\ell$ be any nonnegative integer.
If $\{n',n\}=c>0$, the following formula holds:
\begin{align}
\begin{split}
&\
[n']_{c,b'}
\biggl(\,
\buildrel \longrightarrow \over {\prod_{0\leq p \leq \ell}}
[n+pn']_{c/2,b+pb'\pm pc/2}
\biggr)
\\
=&\
[n]_{c/2,b}
\biggl(\,
\buildrel \longrightarrow \over {\prod_{1\leq p \leq \ell}}
[2n+(2p-1)n']_{c,2b+(2p-1)b'\pm (p-1)c}
[n+pn']_{c/4,b+pb'\pm(2p-1)/4c}
\biggr)
\\
&\quad \times
[2n+(2\ell+1)n']_{c,2b+(2\ell+1)b'\pm\ell c}
[n+(\ell+1)n']_{c/2,b+(\ell+1)b'\pm\ell c/2}
[n']_{c,b'},
\end{split}
\end{align}
\begin{align}
\begin{split}
&\
\biggl(\,
\buildrel \longleftarrow \over {\prod_{0\leq p \leq \ell}}
[pn+n']_{c/2,pb+b'\pm pc/2}
\biggr)
[n]_{c,b}
\\
=&\
[n]_{c,b}
[(\ell+1)n+n']_{c/2,(\ell+1)b+b'\pm\ell c/2}
[(2\ell+1)n+2n']_{c,(2\ell+1)b+2b'\pm\ell c}
\\
&\quad\times
\biggl(\,
\buildrel \longleftarrow \over {\prod_{1\leq p \leq \ell}}
[pn+n']_{c/4,pb+b'\pm (2p-1)c/4}
[(2p-1)n+2n']_{c,(2p-1)b+2b'\pm (p-1)c}
\biggr)
[n']_{c/2,b'}.
\end{split}
\end{align}
\end{lem}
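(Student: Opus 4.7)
The plan is to prove Lemma~\ref{lem:a222} by induction on $\ell$, in the spirit of Lemma~\ref{lem:a112}, but with the atomic moves now supplied by Lemma~\ref{lem:a111} (rather than by a bare application of the pentagon relation).

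For the base case $\ell = 0$, the outer product on the LHS collapses to the single factor $[n]_{c/2,b}$ and the inner product on the RHS is empty, so the first identity reduces exactly to the second formula of Lemma~\ref{lem:a111}. The second identity at $\ell = 0$ is obtained symmetrically from the first formula of Lemma~\ref{lem:a111}.

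For the inductive step of the first identity, assume the statement at level $\ell-1$ and split off the rightmost $p=\ell$ factor on the LHS, so that $\text{LHS}(\ell) = \text{LHS}(\ell-1)\cdot [n+\ell n']_{c/2,\, b+\ell b'\pm \ell c/2}$. Applying the inductive hypothesis, the rightmost three factors of the resulting expression become $[2n+(2\ell-1)n']_{c,\ldots}\,[n+\ell n']_{c/2,\, b+\ell b'\pm (\ell-1) c/2}\,[n']_{c,b'}$, immediately followed by the appended $[n+\ell n']_{c/2,\, b+\ell b'\pm \ell c/2}$. Since $\{n',\, n+\ell n'\} = \{n',n\} = c$, the second identity of Lemma~\ref{lem:a111} rewrites
\[
[n']_{c,b'}\,[n+\ell n']_{c/2,\, b+\ell b'\pm \ell c/2} = [n+\ell n']_{c/2,\, b+\ell b'\pm \ell c/2}\,[2n+(2\ell+1)n']_{c,\ldots}\,[n+(\ell+1)n']_{c/2,\ldots}\,[n']_{c,b'},
\]
producing precisely the new $c$- and $c/2$-factors on the vectors $2n+(2\ell+1)n'$ and $n+(\ell+1)n'$ with the shifts demanded by the RHS at level $\ell$. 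After this swap, the two $c/2$-factors on $n+\ell n'$ appear adjacently with shifts differing by exactly $\pm c/2$, and the fusion formula~\eqref{eq:fus1} with $p=2$ merges them into $[n+\ell n']_{c/4,\, b+\ell b'\pm (2\ell-1)c/4}$, which is exactly the $p=\ell$ contribution to the RHS at level $\ell$. This closes the induction.

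The second identity follows from the parallel argument, splitting off the leftmost factor and invoking the first identity of Lemma~\ref{lem:a111} together with the same fusion. The main obstacle is purely bookkeeping: the $\pm$ sign choice must be propagated consistently and all shift parameters tracked uniformly through the inductive step and the final $c/2 \oplus c/2 \to c/4$ fusion, so that the prescribed shift $\pm(2p-1)c/4$ emerges with the correct coefficient. Beyond this, no pentagon manipulation is needed that is not already packaged inside Lemma~\ref{lem:a111}.
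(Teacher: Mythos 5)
Your proposal is correct and takes essentially the same approach as the paper's: induction on $\ell$ with the base case $\ell=0$ given by Lemma~\ref{lem:a111}, and the inductive step carried out by splitting off the outermost factor, applying Lemma~\ref{lem:a111}, and fusing the resulting pair of $c/2$-factors via~\eqref{eq:fus1} into the $c/4$-factor. The paper's own proof is stated only as a one-line pointer to the analogous classical argument in Matsushita, so your write-out supplies exactly the bookkeeping the paper leaves implicit.
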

\begin{proof}
For $\ell=0$, they coincide with the relations in Lemma \ref{lem:a111}.
Then, they are proved by the induction on $\ell$
with the pentagon relation and Lemma \ref{lem:a111}
in the same way as \cite[Lemma 4]{Matsushita21}.
\end{proof}

\begin{lem}[{cf.~\cite[Lemma 3]{Matsushita21}}]
\label{lem:a23}
Let $\ell$ be any nonnegative integer.
If $\{n',n\}=c>0$, the following formulas hold:
\begin{align}
\begin{split}
&\
[n']_{c,b'}
\biggl(\,
\buildrel \longrightarrow \over {\prod_{p\geq 0}}
[n+pn']_{c/2,b+pb'\pm pc/2}
\biggr)
\\
=&\
[n]_{c/2,b}
\biggl(\,
\buildrel \longrightarrow \over {\prod_{p\geq 1}}
[2n+(2p-1)n']_{c,2b+(2p-1)b'\pm (p-1)c}
[n+pn']_{c/4,b+pb'\pm(2p-1)/4c}
\biggr)
[n']_{c,b'},
\end{split}
\end{align}
\begin{align}
\begin{split}
&\
\biggl(\,
\buildrel \longleftarrow \over {\prod_{p\geq 0}}
[pn+n']_{c/2,pb+b'\pm pc/2}
\biggr)
[n]_{c,b}
\\
=&\
[n]_{c,b}
\biggl(\,
\buildrel \longleftarrow \over {\prod_{ p\geq 1}}
[pn+n']_{c/4,pb+b'\pm (2p-1)c/4}
[(2p-1)n+2n']_{c,(2p-1)b+2b'\pm (p-1)c}
\biggr)
[n']_{c/2,b'}.
\end{split}
\end{align}

\end{lem}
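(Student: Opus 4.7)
The plan is to derive these identities as the $\ell \to \infty$ limit of the finite-product identities in Lemma \ref{lem:a222}, in direct analogy with the derivation of Lemma \ref{lem:a113} from Lemma \ref{lem:a112}. In fact, no new combinatorial identity is needed: the work has already been done in Lemma \ref{lem:a222}, and the present claim is obtained by simply passing to the limit and discarding the boundary terms.

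Concretely, I would start with the first formula of Lemma \ref{lem:a222} and examine the factors in the RHS that depend on $\ell$, namely
\[
[2n+(2\ell+1)n']_{c,\,2b+(2\ell+1)b'\pm\ell c}
\quad\text{and}\quad
[n+(\ell+1)n']_{c/2,\,b+(\ell+1)b'\pm\ell c/2}.
\]
Their underlying lattice vectors have degrees $2\deg n+(2\ell+1)\deg n'$ and $\deg n+(\ell+1)\deg n'$, both of which tend to infinity as $\ell\to\infty$ (since $n'\in N^+$, $\deg n'\geq 1$). Hence for any fixed cutoff $L$, these factors lie in $G^{>L}$ once $\ell$ is large enough, and so they project trivially under $\pi_L\colon G\to G^{\leq L}$. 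The remaining factors on each side of Lemma \ref{lem:a222} agree with the corresponding (truncated) factors of the infinite products in the statement, so the identity holds modulo $G^{>L}$ for every $L$, and therefore holds in $G$ by the definition of the completion.

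To check that the infinite products themselves are well-defined elements of $G$, it suffices to note that the degrees of the factors $[n+pn']_{c/4,\cdots}$, $[2n+(2p-1)n']_{c,\cdots}$, and $[pn+n']_{c/4,\cdots}$, $[(2p-1)n+2n']_{c,\cdots}$ grow linearly in $p$, so for any $L$ only finitely many factors contribute modulo $G^{>L}$; this is exactly the finiteness condition required for path-ordered products to converge in $G$. The second formula of the lemma is proved by the symmetric argument applied to the second formula of Lemma \ref{lem:a222}.

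There is essentially no obstacle: the only subtlety is ensuring that the tail factors depending on $\ell$ really do tend to the identity in the topology of $G$, which is immediate from the grading. All substantive combinatorial content (repeated application of the pentagon relation and the fission/fusion identities) is already embedded in Lemma \ref{lem:a222}.
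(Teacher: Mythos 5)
Your proposal is correct and is exactly the paper's argument (the paper states the proof in one line, "This is obtained by taking the limit $\ell\to\infty$ of the relations in Lemma \ref{lem:a222}"); you simply spell out why the $\ell$-dependent boundary factors vanish in $G^{\leq L}$ for $\ell$ large and why the infinite products converge in the pro-nilpotent topology on $G$. The degree estimates and the use of the normality of $G^{>L}$ are the right justifications and match what the paper implicitly relies on, mirroring the passage from Lemma \ref{lem:a112} to Lemma \ref{lem:a113}.
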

\begin{proof}
This is obtained by taking the limit $\ell\rightarrow \infty$
of the relations in Lemma \ref{lem:a222}.
\end{proof}

\begin{thm}[{cf.~\cite[Theorem 3]{Matsushita21}}]
\label{thm:a221}
If $\{n',n\}=c>0$, the following formula holds:
\begin{align}
\label{eq:nn5}
\begin{split}
&\ [n']_{c/4,b'}[n]_{c,b}
\\
=&\
\biggl(\,
\buildrel \longrightarrow \over {\prod_{p\geq 0}}
[(2p+1)n+4p n']_{c,(2p+1)b+4pb'}
[(p+1)n+(2p+1) n']_{c/4,(p+1)b+(2p+1)b'}
\biggr)
\\
&\quad\times
[n+2n']_{c/2,b+2b'}
\\
&\quad\times
\biggl(\,
\prod_{p\geq 0}
[2^p(n+2n')]_{2^{p-1}c,2^p(b+2b')-2^{p-1}c}
[2^p(n+2n')]_{2^{p-1}c,2^p(b+2b')+2^{p-1}c}
\biggr)
\\
&\quad\times
\biggl(\,
\buildrel \longleftarrow \over {\prod_{p\geq 0}}
[(2p+1)n+(4p+4)n']_{c,(2p+1)b+(4p+4)b'}
[pn+(2p+1)n']_{c/4,pb+(2p+1)b'}
\biggr).
\end{split}
\end{align}
In particular, if we set $n=\bfe_1$, $n'=\bfe_2$, $c=1$, and $b=b'=0$, we obtain
the relation \eqref{eq:a115}.
\end{thm}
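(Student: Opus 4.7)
The plan is to derive the identity by a chain of explicit manipulations, using Theorem \ref{thm:a111} (already proven in the preceding subsection) as the central building block together with Lemma \ref{lem:a23}, paralleling Matsushita's classical proof.

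First, I would fission $[n']_{c/4,b'} = [n']_{c/2, b'-c/4}[n']_{c/2, b'+c/4}$ and apply the first formula of Lemma \ref{lem:a111} twice to push $[n]_{c,b}$ to the leftmost position, obtaining
$[n]_{c,b} [n+n']_{c/2, b+b'-c/4} [n+2n']_{c, b+2b'-c/2} [n']_{c/2, b'-c/4} [n+n']_{c/2, b+b'+c/4} [n+2n']_{c, b+2b'+c/2} [n']_{c/2, b'+c/4}$.
The central pair $[n']_{c/2, b'-c/4}[n+n']_{c/2, b+b'+c/4}$ satisfies $\{n', n+n'\} = c$ with matching $c/2$-quantum datum, so Theorem \ref{thm:a111} applies (with the identification of its ``$n$'' as $n+n'$ and ``$n'$'' as $n'$) and replaces the pair by the ordered product $\buildrel\longrightarrow\over{\prod_{p\geq 0}}[(p+1)n+(2p+1)n']_{c/2, (p+1)b+(2p+1)b'+c/4}$, the middle infinite product $\prod_{p\geq 0}[2^p(n+2n')]_{2^{p-1}c, 2^p(b+2b')\pm 2^{p-1}c}$ (already matching the middle of \eqref{eq:nn5}), and the anti-ordered product $\buildrel\longleftarrow\over{\prod_{p\geq 0}}[pn+(2p+1)n']_{c/2, pb+(2p+1)b'-c/4}$.

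Next, I would weave the outer factors $[n+2n']_{c, b+2b' \pm c/2}$ through the inner sub-products. The left outer $[n+2n']_{c, b+2b'-c/2}$ and the ordered sub-product fit the first formula of Lemma \ref{lem:a23} under the identification $N = n+n'$, $N' = n+2n'$ (with the $+$ sign, since $\{n+2n', (p+1)n+(2p+1)n'\} = c$); applying this produces the type-B factors $[(2p+1)n+4pn']_{c, (2p+1)b+4pb'}$ for $p \geq 1$ and upgrades the ordered sub-product to the $c/4$-quantum-datum type-A factors $[(p+1)n+(2p+1)n']_{c/4, (p+1)b+(2p+1)b'}$. Symmetrically, the right outer factor and the anti-ordered sub-product fit the second formula of Lemma \ref{lem:a23} under $N = n+2n'$, $N' = n'$ (with the $-$ sign), producing the type-C factors $[(2p-1)n+4pn']_{c, (2p-1)b+4pb'}$ for $p \geq 1$ (equivalently $[(2p+1)n+(4p+4)n']_{c, (2p+1)b+(4p+4)b'}$ for $p \geq 0$ after reindexing) and the type-D factors $[pn+(2p+1)n']_{c/4, pb+(2p+1)b'}$ for $p \geq 1$. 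Intermediate pentagon moves via Lemma \ref{lem:a111} are used to bring the outer $[n+2n']$ factors adjacent to their respective sub-products by interchanging with the residual $[n+n']_{c/2, \cdot}$ factors.

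Finally, I would close with fusions: the two leftover $[n+2n']_{c, b+2b' \pm c/2}$ factors now flank the middle infinite product (with which they commute, all factors being in the $n+2n'$ direction) and fuse into $[n+2n']_{c/2, b+2b'}$, giving the standalone middle factor in \eqref{eq:nn5}; the two $[n+n']_{c/2, b+b'\pm c/4}$ factors fuse into $[n+n']_{c/4, b+b'}$ (the $p=0$ type-A factor); and the two $[n']_{c/2, b'\pm c/4}$ factors fuse into $[n']_{c/4, b'}$ (the $p=0$ type-D factor). The resulting expression coincides precisely with the RHS of \eqref{eq:nn5}. The main obstacle is the careful bookkeeping of the $b$- and $b'$-shifts through Theorem \ref{thm:a111}, the two applications of Lemma \ref{lem:a23}, and the residual pentagon manipulations required to bring all factors adjacent for weaving and fusion, verifying in particular that the $\pm$-sign choices ($+$ on the ordered side, $-$ on the anti-ordered side) yield precisely the shifts claimed in \eqref{eq:nn5}.
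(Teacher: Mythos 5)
Your proposal follows the same strategy as the paper's proof: fission $[n']_{c/4,b'}$ into two $c/2$-factors, push $[n]_{c,b}$ to the left via two applications of \eqref{eq:nn1}, apply Theorem \ref{thm:a111} to the central anti-ordered pair $[n']_{c/2,b'-c/4}[n+n']_{c/2,b+b'+c/4}$, weave the outer $[n+2n']_{c,b+2b'\mp c/2}$ factors through the ordered and anti-ordered sub-products via the two formulas of Lemma \ref{lem:a23} (the paper cites the finite version, Lemma \ref{lem:a222}, but applies it in the $\ell\to\infty$ form), and fuse the three leftover pairs into the $p=0$ terms and the standalone $[n+2n']_{c/2,b+2b'}$. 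Your tracking of the $\pm$-sign choices and the final fusion targets agrees with the paper's "little manipulation" step, so this is essentially the same proof.
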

\begin{proof}
In constrast to the proof of Theorem 
\ref{thm:a111},
we do not use the induction;
rather, we use Theorem 
\ref{thm:a111}.
First, we consider the relation
\begin{align}
\label{eq:prod3}
\begin{split}
&\
[n']_{c/4,b'}[n]_{c,b}
\\
=&\
[n']_{c/2,b'-c/4}( [n']_{c/2,b'+c/4} [n]_{c,b})
\\
=&\
([n']_{c/2,b'-c/4}
[n]_{c,b})
[n+n']_{c/2,b+b'+c/4}
[n+2n']_{c,b+2b'+c/2}
[n']_{c/2,b'+c/4}
\\
=&\
[n]_{c,b}
[n+n']_{c/2,b+b'-c/4}
[n+2n']_{c,b+2b'-c/2}
\\
&\quad
\times
[n']_{c/2,b'-c/4}
[n+n']_{c/2,b+b'+c/4}
[n+2n']_{c,b+2b'+c/2}
[n']_{c/2,b'+c/4},
\end{split}
\end{align}
where we used \eqref{eq:nn1} twice for the parentheses.
We need to calculate the anti-ordered product
$
[n']_{c/2,b'-c/4}
[n+n']_{c/2,b+b'+c/4}
$
in the last expression.
By Theorem \ref{thm:a111}, we have
\begin{align}
\label{eq:prod4}
\begin{split}
&\ 
[n']_{c/2,b'-c/4}
[n+n']_{c/2,b+b'+c/4}
\\
= &\
\biggl(\,
\buildrel \longrightarrow \over {\prod_{p\geq 0}}
[(p+1)n + (2p+1)n']_{c,(p+1)b + (2p+1)b'+c/4}
\biggr)
\\
&\quad\times
\prod_{p\geq 0}
[2^{p}(n+2n')]_{2^{p-1}c,2^{p}(b+2b')-2^{p-1}c}
[2^{p}(n+2n')]_{2^{p-1}c,2^{p}(b+2b')+2^{p-1}c}
\\
&\quad\times
\biggl(\,
\buildrel \longleftarrow \over {\prod_{p\geq 0}}
[pn+(2p+1)n']_{c,pb+(2p+1)b'-c/4}
\biggr).
\end{split}
\end{align}
We put \eqref{eq:prod4} into the last expression of \eqref{eq:prod3}
and apply Lemma \ref{lem:a222} to terms therein as
\begin{align}
\begin{split}
&\
[n+2n']_{c,b+2b'-c/2}
\biggl(\,
\buildrel \longrightarrow \over {\prod_{p\geq 0}}
[(p+1)n + (2p+1)n']_{c,(p+1)b + (2p+1)b'+c/4}
\biggr)
\\
=&\
[n+n']_{c/2,b+b'+c/4}
\\
&\quad \times
\biggl(\,
\buildrel \longrightarrow \over {\prod_{p\geq 1}}
[(2p+1)n+4pn']_{c,(2p+1)b + 4pb'}
[(p+1)n+(2p+1)n']_{c/4,(p+1)b+(2p+1)b'}
\biggr)
\\
&\quad \times
[n+2n']_{c,b+2b'-c/2},
\end{split}
\end{align}
\begin{align}
\begin{split}
&\
\biggl(\,
\buildrel \longleftarrow \over {\prod_{p\geq 0}}
[pn+(2p+1)n']_{c,pb+(2p+1)b'-c/4}
\biggr)
[n+2n']_{c,b+2b'+c/2}
\\
=&\
[n+2n']_{c,b+2b'+c/2}
\\
&\quad\times
\biggl(\,
\buildrel \longleftarrow \over {\prod_{p \geq 1}}
[pn+(2p+1)n']_{c/4,pb+(2p+1)b'}
[(2p-1)n+4pn']_{c,(2p-1)b+4pb'}
\biggr)
\\
&\quad\times
[n']_{c/2,b'-c/4}.
\end{split}
\end{align}
Then, after a little manipulation, we obtain the desired expression
in the RHS of \eqref{eq:nn5}.
\end{proof}

\section{Positivity at degree 4}

\label{sec:positivity}

This is a continuation of Example \ref{ex:upto4}
for degree 4.
The purpose of writing down the explicit expressions is to demonstrate
how the problem becomes complicated  even for
such a small degree. 
We have
\begin{align}
\d((3,1))&=k\d_1,
\\
\d((2,2))
&=
\begin{cases}
k\d_1/2
& (k\d_1: \textrm{even}),
\\
k\d_1
& (k\d_1: \textrm{odd}),
\end{cases}
\\
\label{eq:deg41}
\d((1,3))
&=
\begin{cases}
k\d_1/3
& (k\equiv 0 \mod 3),
\\
k\d_1
&
(k\not\equiv 0 \mod 3).
\end{cases}
\end{align}

Firstly, we have the  term for $n=(3,1)$,
\begin{align}
\begin{split}
\prod_{j=1}^{k\d_1}
\prod_{i=1}^{\d_1-2}
\prod_{t=1}^{i}
\prod_{s=1}^{t}
\begin{bmatrix}
3\\
1
\end{bmatrix}
_{1,\frac{-2k \d_1+3k+1-2j+2ki+2kt+2ks}{k\d_1}}
\\
\qquad
=
\prod_{i=1}^{\d_1-2}
\prod_{t=1}^{i}
\prod_{s=1}^{t}
\begin{bmatrix}
3\\
1
\end{bmatrix}
_{\frac{1}{k\d_1},\frac{-3\d_1+3+2i+2t+2s}{\d_1}}.
\end{split}
\end{align}

Next, we have the term for $n=(2,2)$,
\begin{align}
\label{eq:deg44}
\prod_{j=1}^{k\d_1}
\prod_{s=1}^{j-1}
\prod_{i=1}^{\d_1-1}
\prod_{t=1}^{i}
\begin{bmatrix}
2\\
2
\end{bmatrix}
_{1,\frac{-2j-2s+2ki+2kt+2}{k\d_1}-1}
\begin{bmatrix}
2\\
2
\end{bmatrix}
_{1,\frac{-2j-2s+2ki+2kt+2}{k\d_1}+1},
\end{align}
which is written as
\begin{gather}
\begin{split}
&
\prod_{s=1}^{k\d_1-1}
\prod_{i=1}^{\d_1-1}
\prod_{t=1}^{i}
\begin{bmatrix}
2\\
2
\end{bmatrix}
_{\frac{2}{k\d_1},\frac{-3k\d_1 +2s + 2ki+2kt}{k\d_1}-1}
\begin{bmatrix}
2\\
2
\end{bmatrix}
_{\frac{2}{k\d_1},\frac{-3k\d_1  +2s + 2ki+2kt}{k\d_1}+1}
\\
&
\hskip220pt
(k\d_1: \text{even}),
\end{split}
\\
\begin{split}
&\prod_{s=1}^{(k\d_1-1)/2}
\prod_{i=1}^{\d_1-1}
\prod_{t=1}^{i}
\begin{bmatrix}
2\\
2
\end{bmatrix}
_{\frac{1}{k\d_1},\frac{-3k\d_1 -1 +4s + 2ki+2kt}{k\d_1}-1}
\begin{bmatrix}
2\\
2
\end{bmatrix}
_{\frac{1}{k\d_1},\frac{-3k\d_1 -1 +4s + 2ki+2kt}{k\d_1}+1}
\\
&\hskip250pt
(k\d_1: \text{odd}).
\end{split}
\end{gather}
Here we used
 the following resummation formula
for any positive integer $p$:
\begin{align}
\begin{split}
\sum_{j=1}^{p}
\sum_{s=1}^{j-1}
x^{-2j-2s}
&=
\frac{x^{-4p+2}(1-x^{2p-2})(1-x^{2p})}
{(1-x^2)(1-x^4)}
\\
&
=\begin{cases}
\displaystyle
\sum_{j=1}^{p/2}
\sum_{s=1}^{p-1}
x^{-2p-4j+2s}
&
(p: \text{even}),
\\
\displaystyle
\sum_{j=1}^{p}
\sum_{s=1}^{(p-1)/2}
x^{-2p-2-2j+4s}
&
(p: \text{odd}).
\end{cases}
\end{split}
\end{align}

Finally, we have the term for $n=(1,3)$,
\begin{gather}
\begin{split}
\prod_{j=1}^{k\d_1-2}
\prod_{i=1}^{\d_1}
\prod_{t=1}^{k\d_1-j-1}
\prod_{s=1}^{t}
\begin{bmatrix}
1\\
3
\end{bmatrix}
_{1,\frac{k+1-2j-2ki+2t+2s}{k\d_1}},
\end{split}
\end{gather}
which is written as
\begin{align}
\label{eq:deg42}
&
\prod_{i=1}^{\d_1}
\prod_{t=1}^{k\d_1-2}
\prod_{s=1}^{t}
\begin{bmatrix}
1\\
3
\end{bmatrix}
_{\frac{3}{k\d_1},\frac{-k\d_1+k+2-2ki+2t+2s}{k\d_1}}
\quad (k\d_1\equiv 0 \mod 3),
\\
\begin{split}
&
\prod_{i=1}^{\d_1}
\prod_{t=1}^{(k\d_1-1)/6}
\Biggl(
\begin{bmatrix}
1\\
3
\end{bmatrix}
_{\frac{1}{k\d_1},\frac{-k\d_1+k-4-2ki+12t}{k\d_1}}
\begin{bmatrix}
1\\
3
\end{bmatrix}
_{\frac{1}{k\d_1},\frac{k\d_1+k-6-2ki+12t}{k\d_1}}
\\
&
\qquad
\times
\prod_{s=1}^{k\d_1-4}
\begin{bmatrix}
1\\
3
\end{bmatrix}
_{\frac{1}{k\d_1},\frac{-k\d_1+k-2-2ki+12t+2s}{k\d_1}}
\Biggr)
\quad  (k\d_1\equiv 1 \mod 6),
\end{split}
\\
\begin{split}
&\prod_{i=1}^{\d_1}
\prod_{t=1}^{(k\d_1-2)/6}
\Biggl(
\begin{bmatrix}
1\\
3
\end{bmatrix}
_{\frac{1}{k\d_1},\frac{-k\d_1+k-4-2ki+12t}{k\d_1}}
\begin{bmatrix}
1\\
3
\end{bmatrix}
_{\frac{1}{k\d_1},\frac{k\d_1+k-4-2ki+12t}{k\d_1}}
\\
&\qquad\times
\prod_{s=1}^{k\d_1-3}
\begin{bmatrix}
1\\
3
\end{bmatrix}
_{\frac{1}{k\d_1},\frac{-k\d_1+k-2-2ki+12t+2s}{k\d_1}}
\Biggr)
\quad (k\d_1\equiv 2 \mod 6),
\end{split}
\\
&
\prod_{i=1}^{\d_1}
\prod_{t=1}^{(k\d_1-2)/2}
\prod_{s=1}^{(k\d_2 -1)/3}
\begin{bmatrix}
1\\
3
\end{bmatrix}
_{\frac{1}{k\d_1},\frac{-k\d_1+k-2-2ki+4t+6s}{k\d_1}}
\quad (k\d_1\equiv 4 \mod 6),
\\
&
\prod_{i=1}^{\d_1}
\prod_{t=1}^{(k\d_1-2)/3}
\prod_{s=1}^{(k\d_2 -1)/2}
\begin{bmatrix}
1\\
3
\end{bmatrix}
_{\frac{1}{k\d_1},\frac{-k\d_1+k-2-2ki+6t+4s}{k\d_1}}
\quad (k\d_1\equiv 5 \mod 6).
\end{align}
Here we used the following
resummation
formula for any positive integer $p$:
The Laurent polynomial
\begin{align}
&
\sum_{j=1}^{p-2}
\sum_{t=1}^{p-j-1}
\sum_{s=1}^{t}
x^{-2j+2t+2s}
=
\frac{x^{-2p+8}(1-x^{2p-4})(1-x^{2p-2})(1-x^{2p})}
{(1-x^2)(1-x^4)(1-x^6)}
\end{align}
equals to
\begin{gather}
\displaystyle
\sum_{j=1}^{p/3}
\sum_{t=1}^{p-2}
\sum_{s=1}^t
x^{4-6j+2t+2s}
\quad
(p\equiv 0 \mod 3),
\\
\displaystyle
\sum_{j=1}^{p}
\sum_{t=1}^{(p-1)/6}
x^{-2p-6+2j+12t}\biggl(1+x^{2p-2}+\sum_{s=1}^{p-4}x^{2+2s}\biggr)
\quad
(p\equiv 1 \mod 6),
\\
\displaystyle
\sum_{j=1}^{p}
\sum_{t=1}^{(p-2)/6}
x^{-2p-6+2j+12t}\biggl(1+x^{2p}+\sum_{s=1}^{p-3}x^{2+2s}\biggr)
\quad
(p\equiv 2 \mod 6),
\\
\displaystyle
\sum_{j=1}^{p}
\sum_{t=1}^{(p-2)/2}
\sum_{s=1}^{(p-1)/3}
x^{-2p-4+2j+4t+6s}
\quad
(p\equiv 4 \mod 6),
\\
\displaystyle
\sum_{j=1}^{p}
\sum_{t=1}^{(p-2)/3}
\sum_{s=1}^{(p-1)/2}
x^{-2p-4+2j+6t+4s}
\quad
(p\equiv 5 \mod 6).
\end{gather}

Again, we need to resolve the discrepancy between 
\eqref{eq:deg41} and \eqref{eq:deg42} 
for $n=(1,3)$ when $\d_1\equiv 0$ and $k\not\equiv 0$ mod 3,
e.g., $(\d_1,\d_2)=(3,6)$.
The positivity formula that is similar to \eqref{eq:deg33} exists.
However, it becomes much more complicated even for just writing it down.
First, we note that, due to the translation invariance of
the expression   \eqref{eq:deg42}  with respect to the index $i$ therein,
it is enough to concentrate on the case $\d_1=3$.
Let 
\begin{align}
\begin{split}
A_k(x)
:=&\sum_{i=1}^3 \sum_{t=1}^{3k-2}\sum_{s=1}^t
 x^{-2ki+2t+2s+6k-4}
 \\
  =&\
  \frac{(1-x^{6k-4})(1-x^{6k-2})(1-x^{6k})}
  {(1-x^2)(1-x^4)(1-x^{2k})}
  \end{split}
\end{align}
be the (normalized) generating polynomial of the quantum data of
the product  \eqref{eq:deg42} for $\d_1=3$.
Let
\begin{align}
\label{eq:Bp1}
B_k(x):=\frac{A_k(x)}{1+x^2+x^4}
=  \frac{(1-x^{6k-4})(1-x^{6k-2})(1-x^{6k})}
  {(1-x^4)(1-x^6)(1-x^{2k})}.
\end{align}
Then, the positivity property of the product  \eqref{eq:deg42} 
 is equivalent
to the fact that $B_k(x)$ is a polynomial in $x$ with nonnegative integer
coefficients for $k\not\equiv 0 $ mod 3.
This is not obvious from the expression \eqref{eq:Bp1}.
However, one can find and prove such a positive expression with the help of a
computer algebra software. (We used SageMath 9.4.)
We separate it into the cases $k\equiv 1$, 2, 4, 5 mod 6.

(i). Let $k=6p+1$ ($p\in \bbZ_{\geq 0}$). Then, the polynomial $B_k(x)$ has the following positive expression.
\begin{align*}
B_{6p+1}(x)
&=
\sum_{t=1}^p \sum_{s=1}^3
t(x^{12t+4s-16} + x^{-12t-4s + 16 +96p})
\\
&\quad+
\sum_{t=1}^p
\bigl\{
 \sum_{s=1}^2
(p+2t-1)(x^{12t+4s-16+12p} + x^{-12t-4s + 16 +84p})
\\
&\qquad\qquad+
(p+2t)(x^{12t-4+12p} + x^{-12t +4 + 84p})
\bigr\}
\\
&\quad+
\sum_{t=1}^{3p}
(3p+t) (x^{4t-4+24p} + x^{-4t+4+72p})
\\
&\quad+
\sum_{t=1}^p \sum_{s=1}^3
(6p+t) (x^{12t+4s-16+36p} + x^{-12t-4s + 16 +60p})
\\
&\quad+
(7p+1) x^{48p}
\\
&\quad+
\sum_{t=1}^{p-1} \sum_{s=1}^3
t(x^{12t+4s-10} + x^{-12t-4s + 10 +96p})
\\
&\quad+
\sum_{t=1}^{p+1}
\bigl\{
 \sum_{s=1}^2
(p+2t-2)(x^{12t+4s-22+12p} + x^{-12t-4s + 22 +84p})
\\
&\qquad\qquad+
(p+2t-1)(x^{12t-10+12p} + x^{-12t +10 + 84p})
\bigr\}
\\
&\quad+
\sum_{t=1}^{3p-2}
(3p+t+1) (x^{4t+2+24p} + x^{-4t-2+72p})
\\
&\quad+
6p(x^{36p-2}+x^{36p+2}+x^{60p-2}+x^{60p+2})
\\
&\quad+
\sum_{t=1}^{p-1} \sum_{s=1}^3
(6p+t) (x^{12t+4s-10+36p} + x^{-12t-4s + 10 +60p})
\\
&\quad+
7p(x^{48p-6}+x^{48p-2}+x^{48p+2}+x^{48p+6}).
\end{align*}

(ii). Let $k=6p+2$ ($p\in \bbZ_{\geq 0}$). Then, the polynomial $B_k(x)$ has the following positive expression.
\begin{align*}
B_{6p+2}(x)
&=
\sum_{t=1}^p \sum_{s=1}^3
t(x^{12t+4s-16} + x^{-12t-4s + 32 +96p})
\\
&\quad+
\sum_{t=1}^p
\bigl\{
 \sum_{s=1}^2
(p+2t)(x^{12t+4s-12+12p} + x^{-12t-4s + 28 +84p})
\\
&\qquad\qquad+
(p+2t-1)(x^{12t-12+12p} + x^{-12t +28 + 84p})
\bigr\}
\\
&\quad+
\sum_{t=1}^{3p+1}
(3p+t) (x^{4t-4+24p} + x^{-4t+20+72p})
\\
&\quad+
(6p+2)(x^{36p+4}+x^{36p+8}+x^{60p+8}+x^{60p+12})
\\
&\quad+
\sum_{t=1}^{p-1} \sum_{s=1}^3
(6p+t+2) (x^{12t+4s-4+36p} + x^{-12t-4s + 20 +60p})
\\
&\quad+
(7p+2) (x^{48p}+x^{48p+4}+x^{48p+8}+x^{48p+12}+x^{48p+16})
\\
&\quad+
\sum_{t=1}^{p} \sum_{s=1}^3
t(x^{12t+4s-10} + x^{-12t-4s + 26 +96p})
\\
&\quad+
\sum_{t=1}^{p}
\bigl\{
 \sum_{s=1}^2
(p+2t)(x^{12t+4s-6+12p} + x^{-12t-4s + 22 +84p})
\\
&\qquad\qquad+
(p+2t-1)(x^{12t-6+12p} + x^{-12t +22 + 84p})
\bigr\}
\\
&\quad+
\sum_{t=1}^{3p}
(3p+t) (x^{4t+2+24p} + x^{-4t+14+72p})
\\
&\quad+
\sum_{t=1}^{p} \sum_{s=1}^3
(6p+t) (x^{12t+4s-10+36p} + x^{-12t-4s + 26 +60p})
\\
&\quad+
(7p+1)(x^{48p+6}+x^{48p+10}).
\end{align*}

(iii). Let $k=6p+4$ ($p\in \bbZ_{\geq 0}$). Then, the polynomial $B_k(x)$ has the following positive expression.
\begin{align*}
B_{6p+4}(x)
&=
\sum_{t=1}^p \sum_{s=1}^3
t(x^{12t+4s-16} + x^{-12t-4s + 64 +96p})
\\
&\quad+
\sum_{t=1}^{p+1}
\bigl\{
 \sum_{s=1}^2
(p+2t-1)(x^{12t+4s-16+12p} + x^{-12t-4s + 64 +84p})
\\
&\qquad\qquad+
(p+2t)(x^{12t-4+12p} + x^{-12t +52 + 84p})
\bigr\}
\\
&\quad+
\sum_{t=1}^{3p+1}
(3p+t+2) (x^{4t+8+24p} + x^{-4t+40+72p})
\\
&\quad+
(6p+4)(x^{36p+16}+x^{36p+20}+x^{60p+28}+x^{60p+32})
\\
&\quad+
\sum_{t=1}^p \sum_{s=1}^3
(6p+t+4) (x^{12t+4s+8+36p} + x^{-12t-4s + 40 +60p})
\\
&\quad+
(7p+5) x^{48p+24}
\\
&\quad+
\sum_{t=1}^{p} \sum_{s=1}^3
t(x^{12t+4s-10} + x^{-12t-4s + 58 +96p})
\\
&\quad+
\sum_{t=1}^{p+1}
\bigl\{
 \sum_{s=1}^2
(p+2t-1)(x^{12t+4s-10+12p} + x^{-12t-4s + 58 +84p})
\\
&\qquad\qquad+
(p+2t)(x^{12t+2+12p} + x^{-12t +46 + 84p})
\bigr\}
\\
&\quad+
\sum_{t=1}^{3p}
(3p+t+2) (x^{4t+14+24p} + x^{-4t+34+72p})
\\
&\quad+
\sum_{t=1}^{p} \sum_{s=1}^3
(6p+t+2) (x^{12t+4s+2+36p} + x^{-12t-4s + 46 +60p})
\\
&\quad+
(7p+3)(x^{48p+18}+x^{48p+22}+x^{48p+26}+x^{48p+30}).
\end{align*}

(iv). Let $k=6p+5$ ($p\in \bbZ_{\geq 0}$). Then, the polynomial $B_k(x)$ has the following positive expression.
\begin{align*}
B_{6p+5}(x)
&=
\sum_{t=1}^{p+1} \sum_{s=1}^3
t(x^{12t+4s-16} + x^{-12t-4s + 80 +96p})
\\
&\quad+
\sum_{t=1}^p
\bigl\{
 \sum_{s=1}^2
(p+2t+1)(x^{12t+4s+12p} + x^{-12t-4s + 64 +84p})
\\
&\qquad\qquad+
(p+2t)(x^{12t+12p} + x^{-12t +64 + 84p})
\bigr\}
\\
&\quad+
\sum_{t=1}^{3p+3}
(3p+t+1) (x^{4t+8+24p} + x^{-4t+56+72p})
\\
&\quad+
\sum_{t=1}^p \sum_{s=1}^3
(6p+t+4) (x^{12t+4s+8+36p} + x^{-12t-4s + 56 +60p})
\\
&\quad+
(7p+5) (x^{48p+24} +x^{48p+28}+x^{48p+32}+x^{48p+36}+x^{48p+40})
\\
&\quad+
\sum_{t=1}^{p} \sum_{s=1}^3
t(x^{12t+4s-10} + x^{-12t-4s + 74 +96p})
\\
&\quad+
\sum_{t=1}^{p+1}
\bigl\{
 \sum_{s=1}^2
(p+2t)(x^{12t+4s-6+12p} + x^{-12t-4s + 70 +84p})
\\
&\qquad\qquad+
(p+2t-1)(x^{12t-6+12p} + x^{-12t +70 + 84p})
\bigr\}
\\
&\quad+
\sum_{t=1}^{3p+2}
(3p+t+2) (x^{4t+14+24p} + x^{-4t+50+72p})
\\
&\quad+
(6p+4)(x^{36p+26}+x^{60p+38})
\\
&\quad+
\sum_{t=1}^{p} \sum_{s=1}^3
(6p+t+4) (x^{12t+4s+14+36p} + x^{-12t-4s + 50 +60p})
\\
&\quad+
(7p+5) (x^{48p+30}+x^{48p+34}).
\end{align*}

\bibliography{../../biblist/biblist.bib}
\end{document}